\theoremstyle{plain}
\newtheorem{theorem}{Theorem}[section]
\newtheorem{lemma}[theorem]{Lemma}
\newtheorem{proposition}[theorem]{Proposition}
\newtheorem{corollary}[theorem]{Corollary}
\def\bc{\mathbb{C}}
\def\br{\mathbb{R}}
\def\fA{\mathfrak{A}}
\def\fB{\mathfrak{B}}
\def\fU{\mathfrak{U}}
\def\fG{\mathfrak{G}}
\def\fH{\mathfrak{H}}
\def\fI{\mathfrak{I}}
\def\fK{\mathfrak{K}}
\def\fP{\mathfrak{P}}
\def\fU{\mathfrak{U}}
\def\fu{\mathfrak{u}}
\def\fZ{\mathfrak{Z}}
\def\fs{\mathfrak{s}}
\def\fo{\mathfrak{o}}
\def\fp{\mathfrak{p}}
\newcounter{commentlabel}
\begin{document}
  \title{Isometries of Clifford Algebras II}
\author{Patrick Eberlein}
\address{Department of Mathematics, University of North Carolina, Chapel Hill, NC 27599}
\email{pbe@email.unc.edu}
\subjclass[2010]{15A66 , 22F99}
\keywords{Clifford algebras , canonical symmetric bilinear form , isometries}
\date{\today}
\maketitle

\noindent $\mathbf{Abstract}$  Let F be a field of characteristic $\neq 2$, and let $F^{n}$ denote the vector space of n-tuples of elements in F.  Let $\{e_{1}, ... , e_{n} \}$ denote the canonical basis of $F^{n}$.  Let r and s be nonnegative integers such that $r+s = n$, and let Q denote the nondegenerate, symmetric, bilinear form on $F^{n}$ such that $Q(e_{i},e_{j}) = 0$ if $i \neq j, Q(e_{i},e_{i}) = 1$ if $1 \leq i \leq r$ and $Q(e_{r+j}, e_{r+j}) = - 1$ for $1 \leq j \leq s$.  Let $C\ell(r,s)$ denote the Clifford algebra determined by Q and $F^{n}$.  There is a canonical extension of Q to a nondegenerate, symmetric bilinear form $\overline{Q}$ on $C\ell(r,s)$.  An element g of $C\ell(r,s)$ will be  called an isometry of $C\ell(r,s)$ if left and right translations by g preserve $\overline{Q}$.  Let $G_{r,s}$ denote the group of all isometries of $C\ell(r,s)$.  We construct a Lie algebra $\fG_{r,s}$ over F that equals the Lie algebra of $G_{r,s}$ in the case that $F = \br$ or $\bc$.  The Lie algebra $\fG_{r,s}$ admits an involutive automorphism  whose $+1$ and $-1$ eigenspaces determine a Cartan decomposition $\fG_{r,s} = \fK_{r,s} \oplus \fP_{r,s}$.  We compute the bracket relations for a natural system of generators of $\fG_{r,s}$.  Finally, we determine $\fG_{r,s}$ in the case that $F = \br$.
\newline

\noindent $\mathbf{Remark}$  The main result of this preprint is known, but is left to the reader as a "hard exercise".  We would like to leave this preprint on the ArXiv in case the details of one solution are of interest to the reader.  
\newline

\noindent I was informed by the referee of "Isometries of Clifford Algebras II" that the main results of "Isometries of Clifford Algebras I" and Isometries of "Clifford Algebras, II" are both contained in the table labeled $R_{p,q}$ at the top of page 271 of I. Porteus, Topological Geometry, Cambridge University Press, Cambridge, 1969.  This result of Porteous also appears later in table XB on page 737 of the paper "Scalar products of spinors and an extension of Brauer-Wall groups" by P. Lounesto.  This paper was published in Foundations of Physics, vol. 11, Nos. 9/10, 721-740. 
\newline

\noindent $\mathbf{Introduction}$ This paper is a continuation of [E], in which we computed the compact isometry group $G_{n,0}$ in the case that $F = \br$.  The goal here is to compute the most general Clifford Lie algebra $\fG_{r,s}$ in the case that $F = \br$.
\newline 

\noindent In section 1 we state briefly some results from [E] that will be useful here.  In section 2 we define the Clifford Lie algebra $\fG_{r,s}$ and an involutive automorphism $\beta$ whose $+1$ and $-1$ eigenspaces determine a Cartan decomposition $\fG_{r,s} = \fK_{r,s} \oplus \fP_{r,s}$.  We compute the dimensions of $\fG_{r,s}, \fK_{r,s}$ and $\fP_{r,s}$.  The Lie algebra $\fG_{r,s}$ also admits a decomposition $\fG_{r,s} = \fZ(\fG_{r,s}) \oplus\fH_{r,s}$, where  $\fZ(\fG_{r,s})$ is the center of $\fG_{r,s}$ and $\fH_{r,s}$ is an ideal whose Killing form is nondegenerate.  The center $\fZ(\fG_{r,s})$ is 1-dimensional if $r+s \equiv 1~(mod~4)$ and $\{0 \}$ otherwise.  In section 3 we determine the bracket relations for a natural system of generators for $C\ell(r,s)$.  In section 4 we determine the center of $\fK_{r,s}$ in terms of these generators.  In section 5 we use the main result of [E] to determine the complexifications of the semisimple ideals $\fH_{r,s}$ in the case that $F = \br$.  In section 6 we develop a method for determining $\fH_{r,s}$ in the case that $F = \br$.  If $r+s \neq 3~(mod~4)$, then $\fH_{r,,s}$ is a real form of $\fU^{\bc}$, where $\fU$ is a simple, real Lie algebra whose Killing form is negative definite.  By work of E. Cartan all real forms of $\fU^{\bc}$ are determined by involutive automorphisms $\tau$ of $\fU$, and there are only two or three conjugacy classes in $Aut(\fU)$ of such automorphisms $\tau$.  Determining  the right conjugacy class involves computing the dimension of $\fK_{r,s}^{\prime} = \fK_{r,s} \cap \fH_{r,s}$ and the dimension of the center of  $\fK_{r,s}^{\prime}$.  If $r+s \equiv 3~(mod~4)$, then the method for determining $\fH_{r,s}$ is a more complicated version of the method just described.  In sections 7,8 and 9 we carry out the computation of $\fH_{r,s}$.
  
\section{Preliminaries}   

\noindent We list some results from [E] that will be useful here, with references from [E] in parentheses.  See also [Ha], [LM] and [FH]. 
\newline

\noindent Let V be a finite dimensional vector space over a field with characterstic $\neq 2$.  Let $Q : V \times V \rightarrow F$ be a non degenerate, symmetric bilinear form.  There exists a  basis $\{v_{1}, v_{2}, ... , v_{n} \} , n = dim~V$ of V that is Q-orthogonal ; that is, $Q(v_{i},v_{j}) = 0$ if $i \neq j$ and $Q(v_{i},v_{i}) \neq 0$ for all i. 

\begin{proposition}  (Proposition 1.1)  Let (V,Q) be as above.   Then there exists an F-algebra $C\ell(V,Q)$ and an injective linear map $i : V \rightarrow C\ell(V,Q)$ with the following property, which characterizes $C\ell(V,Q)$ up to algebra isomorphism :   Let $\fA$ be any associative finite dimensional  algebra over F, and let $\sigma : V \rightarrow \fA$ be any F - linear map such that $\sigma(v) \cdot \sigma(v) = - Q(v,v)1$ for all v $\in$ V.  Then there exists a unique algebra homomorphism $j : C\ell(V,Q) \rightarrow \fA$ such that $j \circ i = \sigma$.
\end{proposition}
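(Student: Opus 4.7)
The plan is to construct $C\ell(V,Q)$ explicitly as a quotient of the tensor algebra, then verify the universal property by a direct diagram chase. Injectivity of $i$ will require a separate argument, which I anticipate to be the main obstacle.

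First I would form the tensor algebra $T(V) = \bigoplus_{k \ge 0} V^{\otimes k}$, which carries its own well-known universal property: every $F$-linear map out of $V$ into an associative $F$-algebra extends uniquely to an $F$-algebra homomorphism out of $T(V)$. Let $I(Q) \subset T(V)$ denote the two-sided ideal generated by all elements of the form $v \otimes v + Q(v,v)\cdot 1$ for $v \in V$, and define $C\ell(V,Q) := T(V)/I(Q)$ together with $i : V \to C\ell(V,Q)$ given by composing the canonical inclusion $V \hookrightarrow T(V)$ with the quotient map. The relation $i(v)\cdot i(v) = -Q(v,v)\cdot 1$ holds by construction.

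Next, given $\sigma : V \to \fA$ with $\sigma(v)\sigma(v) = -Q(v,v)\cdot 1$ for all $v \in V$, I would first invoke the universal property of $T(V)$ to get a unique algebra homomorphism $\tilde\sigma : T(V) \to \fA$ extending $\sigma$. The Clifford relation on $\sigma$ means that $\tilde\sigma$ kills every generator of $I(Q)$, so by the two-sided ideal structure it kills $I(Q)$ altogether and descends to a well-defined algebra homomorphism $j : C\ell(V,Q) \to \fA$ satisfying $j \circ i = \sigma$. Uniqueness of $j$ is automatic, since $i(V)$ generates $C\ell(V,Q)$ as an algebra.

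The hard part, and really the only non-formal content, is showing that $i$ is injective; equivalently that $V \cap I(Q) = 0$ inside $T(V)$. To handle this I would exhibit a faithful enough representation of $C\ell(V,Q)$. Concretely, fix the $Q$-orthogonal basis $\{v_1, \ldots, v_n\}$ supplied just before the statement, form the exterior algebra $\Lambda V$, and define $\sigma_0 : V \to \mathrm{End}_F(\Lambda V)$ by
\[
\sigma_0(v)(\omega) \;=\; v \wedge \omega \;-\; \iota_v^Q\omega,
\]
where $\iota_v^Q$ is the contraction determined by $Q(v,\cdot)$. A short computation using $(v\wedge)^2 = 0$, $(\iota_v^Q)^2 = 0$, and $v\wedge \iota_v^Q + \iota_v^Q (v\wedge\cdot) = Q(v,v)\,\mathrm{Id}$ gives $\sigma_0(v)^2 = -Q(v,v)\,\mathrm{Id}$. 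By the universal property already established, $\sigma_0$ induces an algebra map $j_0 : C\ell(V,Q) \to \mathrm{End}_F(\Lambda V)$ with $j_0(i(v))(1) = v$. Hence if $i(v) = 0$ then $v = 0$ in $\Lambda V$, so $i$ is injective. Finite dimensionality of $C\ell(V,Q)$ then follows from the fact that the $2^n$ ordered products $v_{k_1}\cdots v_{k_p}$ with $k_1 < \cdots < k_p$ span $C\ell(V,Q)$ (via repeated use of $v_iv_j + v_jv_i = -2Q(v_i,v_j)\cdot 1 = 0$ for $i\neq j$), completing the proof.
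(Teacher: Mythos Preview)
Your argument is correct and is the standard textbook construction of the Clifford algebra (tensor algebra modulo the Clifford ideal, with injectivity established via the exterior-algebra representation). Note, however, that the paper does not actually prove this proposition: Section~1 is a list of results quoted from the companion paper [E], and Proposition~1.1 is simply stated with a parenthetical reference to its location there. So there is no ``paper's own proof'' to compare against here.

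Two minor remarks. First, the proposition also asserts that the universal property \emph{characterizes} $C\ell(V,Q)$ up to algebra isomorphism; you do not address this, but it is the routine Yoneda-style argument (two objects with the same universal property admit mutually inverse comparison maps), so the omission is harmless. Second, the proposition as stated restricts the target $\fA$ to be finite dimensional, whereas your quotient construction yields the universal property for arbitrary associative $F$-algebras; this is strictly stronger and causes no problem when you later take $\fA = \mathrm{End}_F(\Lambda V)$.
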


\begin{proposition}  There is a canonical extension of Q to a nondegenerate, symmetric bilinear form $\overline{Q}$ on $C\ell(V,Q)$.  (See Proposition 4.1 for a precise statement)
\end{proposition}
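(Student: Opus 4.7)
The plan is to define $\overline{Q}$ explicitly using a Q-orthogonal basis of $C\ell(V,Q)$, verify directly that it extends $Q$ and is symmetric and nondegenerate, and then give a basis-independent formula that establishes canonicity. I would fix a Q-orthogonal basis $\{e_1,\dots,e_n\}$ of $V$ as provided by the paragraph preceding Proposition 1.1. The defining relations $e_i^2=-Q(e_i,e_i)\cdot 1$ and $e_ie_j=-e_je_i$ for $i\neq j$ (consequences of Proposition 1.1) imply that the products
\[
e_I \;=\; e_{i_1}e_{i_2}\cdots e_{i_k},\qquad I=\{i_1<\cdots<i_k\}\subseteq\{1,\dots,n\},
\]
together with $e_\emptyset=1$, span $C\ell(V,Q)$; the standard universal-property argument (constructing a retraction onto the exterior algebra of $V$) shows they form a basis, of dimension $2^n$. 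I would then define
\[
\overline{Q}(e_I,e_J) \;=\; \delta_{IJ}\,\varepsilon(|I|)\,\prod_{i\in I} Q(e_i,e_i),
\]
with $\varepsilon(k)\in\{\pm 1\}$ a fixed sign chosen so that $\overline{Q}$ agrees with $Q$ on $V$, and extend bilinearly. The three desired properties are then immediate: $\overline{Q}|_{V\times V}=Q$ by construction, the matrix of $\overline{Q}$ in the basis $\{e_I\}$ is diagonal and hence symmetric, and its diagonal entries are nonzero products of nonzero scalars $Q(e_i,e_i)$ and hence give nondegeneracy.

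The main obstacle is showing that $\overline{Q}$ does not depend on the Q-orthogonal basis used in its definition. For this I would produce an intrinsic formula. Let $\tau:C\ell(V,Q)\to C\ell(V,Q)$ be the reversion anti-involution, the unique anti-automorphism satisfying $\tau(v)=v$ for all $v\in V$; its existence follows from Proposition 1.1 applied to the opposite algebra. Let $\pi:C\ell(V,Q)\to F$ be the projection onto the scalar subspace $F\cdot 1$ with respect to the canonical filtration of $C\ell(V,Q)$ by length of products from $V$ (whose associated graded is the exterior algebra of $V$). The claim is
\[
\overline{Q}(x,y) \;=\; \pi\bigl(x\cdot\tau(y)\bigr),
\]
up to a fixed universal sign, and it suffices to check this on the basis $\{e_I\}$: for $I=J$ the product $e_I\cdot\tau(e_I)$ collapses via the anticommutation and squaring relations to a scalar equal to $\pm\prod_{i\in I}Q(e_i,e_i)$, while for $I\neq J$ the product $e_I\cdot\tau(e_J)$ is, up to a scalar, a basis monomial $e_K$ with $K\neq\emptyset$ and therefore has zero scalar part. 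Since $\tau$ and $\pi$ are defined without reference to any basis, $\overline{Q}$ is canonical.

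As a sanity check and alternative viewpoint, the same form admits a description via the Chevalley vector-space isomorphism $C\ell(V,Q)\cong\Lambda^*V$: declare different exterior degrees to be orthogonal and set $\overline{Q}(v_1\wedge\cdots\wedge v_k,\,w_1\wedge\cdots\wedge w_k)=\det(Q(v_i,w_j))$. Specializing to a Q-orthogonal basis recovers the diagonal formula above, confirming both the canonical character of the construction and the correctness of the sign convention. The precise comparison with the author's Proposition 4.1 will fix the sign $\varepsilon(k)$ once and for all.
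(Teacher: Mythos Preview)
The paper does not prove this statement here; Proposition~1.2 sits in the preliminaries section and simply cites Proposition~4.1 of the companion paper~[E] for the construction and proof. So there is no in-paper argument to compare against, beyond the hint in Proposition~1.3 (also quoted from~[E]) that the basis $\{1,e_I\}$ is $\overline{Q}$-orthogonal, which is exactly the shape of your construction.

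Your proposal is the standard construction and is essentially correct, but two points deserve tightening. First, with the reversion $\tau$ (the anti-involution fixing $V$ pointwise) your intrinsic formula $\pi(x\,\tau(y))$ restricts on $V$ to $e_i\cdot\tau(e_i)=e_i^{2}=-Q(e_i,e_i)$, i.e.\ to $-Q$ rather than $Q$. To land on $Q$ you should use the Clifford conjugation $c$ of Proposition~1.4 (the anti-involution with $c|_V=-\mathrm{Id}$): then $e_i\cdot c(e_i)=-e_i^{2}=Q(e_i,e_i)$, and more generally a short computation using Lemma~2.6 gives $e_I\cdot c(e_I)=\prod_{i\in I}Q(e_i,e_i)$ with no residual sign, so the clean canonical formula is $\overline{Q}(x,y)=\pi\bigl(x\,c(y)\bigr)$ and your $\varepsilon(k)$ is identically $1$. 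Second, a filtration alone does not furnish a projection $\pi$; you need a canonical splitting. The Chevalley vector-space isomorphism $C\ell(V,Q)\cong\Lambda^{*}V$ you invoke at the end does exactly this, but it should appear where you define $\pi$, not as an afterthought. An alternative basis-free description is $\pi(x)=2^{-n}\,\mathrm{tr}(L_x)$, the normalized trace of left multiplication, which one checks on the $e_I$ directly. With these two adjustments your argument is complete.
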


\noindent A k-tuple $I = (i_{1}, ... , i_{k})$ is a multi-index if $i_{r} < i_{r+1}$ for $1 \leq r \leq k-1$.   Let $e_{I}$ denote $e_{i_{1}} \cdot e_{i_{2}} \cdot ... \cdot e_{i_{k}}$, and let $|I| = k$, the length of the multi-index I.
 
\begin{proposition} (Corollary 6.10 and Lemma 4.2) Let $\fI$ denote the set of all multi-indices.  Let $\fB = \{1, e_{I} : I \in \fI \}$. Then $\fB$ is a $\overline{Q}$-orthogonal basis of $C\ell(V,Q)$.  Moreover, $e_{I}^{2}$ is a nonzero element of F for all $I \in \fI$.
\end{proposition}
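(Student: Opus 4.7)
The plan is to leverage the universal property (Proposition 1.1) to extract concrete anticommutation relations among the generators, and then use them to establish $\fB$ as a basis, compute the squares $e_I^2$, and verify $\overline{Q}$-orthogonality. Throughout, I would work with the Q-orthogonal basis $\{e_1, \ldots, e_n\}$ of $V$ guaranteed by the preliminary remarks, identified via $i : V \hookrightarrow C\ell(V,Q)$ with its image. Polarizing the defining identity $\sigma(v) \sigma(v) = -Q(v,v) \cdot 1$ with $v = e_i + e_j$ gives $e_i e_j + e_j e_i = -2 Q(e_i, e_j) \cdot 1$. Orthogonality of the chosen basis then yields $e_i e_j = -e_j e_i$ for $i \neq j$, together with $e_i^2 = -Q(e_i,e_i) \cdot 1 \in F$.

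For the basis claim, the spanning part follows immediately from these relations: an arbitrary product of the $e_i$'s can be reordered with strictly increasing indices at the cost of a sign, then reduced by using $e_i^2 \in F$ to collapse repeated indices, leaving a scalar multiple of some $e_I$ or of $1$. Linear independence of $\fB$ amounts to the standard fact that $\dim C\ell(V,Q) = 2^n$, which one can obtain either by constructing a faithful representation of $C\ell(V,Q)$ on $\Lambda^{*}V$ via the universal property or, as is presumably done in [E], by comparing with the associated graded algebra of the natural filtration on $C\ell(V,Q)$. For the square formula, writing $e_I = e_{i_1} \cdots e_{i_k}$ and commuting the second block past the first one index at a time produces the sign $(-1)^{(k-1)+(k-2)+\cdots+1+0} = (-1)^{k(k-1)/2}$, so that $e_I^2 = (-1)^{k(k-1)/2} \prod_{j=1}^{k} e_{i_j}^2 = (-1)^{k(k+1)/2} \prod_{j=1}^{k} Q(e_{i_j}, e_{i_j})$, a nonzero element of $F$ since each factor is nonzero.

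The hardest step will be $\overline{Q}$-orthogonality of $\fB$, because it depends on the explicit definition of $\overline{Q}$ promised in Proposition 4.1 of [E]. Adopting the standard construction, in which $\overline{Q}(x,y)$ is the scalar component of $\alpha(x) \cdot y$ for the main anti-involution $\alpha$ sending $e_{i_1} \cdots e_{i_k}$ to $e_{i_k} \cdots e_{i_1}$, one observes that $\alpha(e_I) \cdot e_J = (-1)^{|I|(|I|-1)/2}\, e_I \cdot e_J$. When $I = J$, the product $e_I e_I = e_I^2$ is a nonzero scalar by the previous step; when $I \neq J$, the same reordering-and-collapse procedure applied to $e_I e_J$ leaves a scalar multiple of some monomial $e_K$ with $K \neq \emptyset$, because at least one index in the symmetric difference $I \triangle J$ survives all cancellations. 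Hence the scalar component of $\alpha(e_I) \cdot e_J$ vanishes when $I \neq J$, establishing $\overline{Q}$-orthogonality. This simultaneously confirms the basis statement and, via the nondegeneracy of $\overline{Q}$, reconfirms that each $e_I^2$ is nonzero.
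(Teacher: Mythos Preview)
The paper does not itself prove this proposition; it is imported from the companion paper [E] (labeled there as Corollary~6.10 and Lemma~4.2), so there is no in-paper argument to compare against directly. Your outline is the standard one and is mathematically sound; the square formula you derive, $e_{I}^{2} = (-1)^{k(k-1)/2}\prod_{j} e_{i_{j}}^{2}$, is exactly Lemma~2.6 of the present paper. Two points are worth flagging, however.

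First, there is a notation clash: in this paper $\alpha$ is the \emph{grade automorphism} of Proposition~1.4 (the algebra automorphism equal to $-\mathrm{Id}$ on $V$), not the reversal anti-involution you describe. The paper's anti-automorphism is $c$, which also negates each $e_{i}$ and hence sends $e_{I}$ to $(-1)^{|I|(|I|+1)/2}\,e_{I}$ rather than $(-1)^{|I|(|I|-1)/2}\,e_{I}$.

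Second, and related, your proposed $\overline{Q}(x,y)$ as the scalar part of $(\text{reversal of }x)\cdot y$ restricts to $-Q$ on $V$, since $e_{i}\cdot e_{i} = -Q(e_{i},e_{i})$; replacing your reversal by the paper's $c$ restores the correct sign and is presumably how [E] defines $\overline{Q}$. This discrepancy is harmless for the orthogonality claim: either anti-involution acts on each $e_{I}$ as a nonzero scalar, so the question reduces to your key observation that $e_{I}\cdot e_{J}$ has zero scalar component precisely when $I \neq J$, because some index in the symmetric difference $I \bigtriangleup J$ survives all cancellations. That step is correct and is the heart of the matter.
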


\begin{proposition}  (Section 1)  

	1)  There is a unique algebra automorphism $\alpha : C\ell(V,Q) \rightarrow C\ell(V,Q)$ such that $\alpha^{2} = Id$ on $C\ell(V,Q)$ and  $\alpha \equiv - Id$ on V.
	
	2)  There is a unique algebra anti-automorphism $c : C\ell(V,Q) \rightarrow C\ell(V,Q)$ such that $c^{2} = Id$ on $C\ell(V,Q)$ and $c \equiv - Id$ on V.
	
	3)  The maps $\alpha$ and c commute on $C\ell(V,Q)$.
\end{proposition}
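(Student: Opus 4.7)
The plan is to apply the universal property of Proposition 1.1 three times, each time choosing a target algebra in which the relevant Clifford relation is visibly satisfied, so that the uniqueness clause does all of the real work.

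For part (1), take $\fA = C\ell(V,Q)$ and define the $F$-linear map $\sigma : V \to C\ell(V,Q)$ by $\sigma(v) = -i(v)$. Since $\sigma(v)\,\sigma(v) = i(v)\,i(v) = -Q(v,v)\cdot 1$, Proposition 1.1 produces a unique algebra homomorphism $\alpha : C\ell(V,Q) \to C\ell(V,Q)$ with $\alpha \circ i = -i$. Iterating gives $\alpha^{2} \circ i = i$, so $\alpha^{2}$ and the identity are both algebra endomorphisms extending $i$; the uniqueness clause of Proposition 1.1 forces $\alpha^{2} = \mathrm{Id}$. This also shows $\alpha$ is bijective, hence an algebra automorphism, and its uniqueness as an automorphism restricting to $-\mathrm{Id}$ on $V$ is immediate from Proposition 1.1.

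For part (2), repeat the argument inside the opposite algebra. Let $C\ell(V,Q)^{\mathrm{op}}$ denote $C\ell(V,Q)$ equipped with the reversed multiplication, and define $\sigma : V \to C\ell(V,Q)^{\mathrm{op}}$ by $\sigma(v) = -i(v)$. The square of any element is the same in $C\ell(V,Q)^{\mathrm{op}}$ as in $C\ell(V,Q)$, so the Clifford relation $\sigma(v)^{2} = -Q(v,v)\cdot 1$ continues to hold. Proposition 1.1 yields a unique algebra homomorphism into $C\ell(V,Q)^{\mathrm{op}}$, which viewed back in $C\ell(V,Q)$ is an anti-algebra homomorphism $c$ satisfying $c \circ i = -i$. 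Since the composition of two anti-homomorphisms is a homomorphism, $c^{2}$ is an algebra endomorphism with $c^{2} \circ i = i$; the uniqueness clause gives $c^{2} = \mathrm{Id}$, and the uniqueness of $c$ itself follows from the universal property applied in the opposite algebra.

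For part (3), compare $\alpha \circ c \circ \alpha$ with $c$. Both maps are anti-homomorphisms of $C\ell(V,Q)$ (a composition of two algebra automorphisms with one anti-automorphism), and on $i(v)$ one computes $\alpha(c(\alpha(i(v)))) = \alpha(c(-i(v))) = \alpha(i(v)) = -i(v) = c(i(v))$. The uniqueness argument used in part (2), applied inside $C\ell(V,Q)^{\mathrm{op}}$, therefore gives $\alpha \circ c \circ \alpha = c$; composing on the right with $\alpha$ and using $\alpha^{2} = \mathrm{Id}$ yields $\alpha \circ c = c \circ \alpha$. The only point requiring care throughout is verifying the Clifford relation $\sigma(v)^{2} = -Q(v,v)\cdot 1$ in each chosen target algebra so that Proposition 1.1 is applicable; once that is done, the universal property does all the real work and no genuine obstacle remains.
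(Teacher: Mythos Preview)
Your argument is correct. Note, however, that the paper itself does not prove this proposition: it appears in Section~1 (Preliminaries) as a result quoted from~[E], with the parenthetical ``(Section~1)'' referring to Section~1 of that earlier paper. So there is no in-paper proof to compare against directly.

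That said, the paper does prove the closely related Corollary~2.2 (that $\alpha$, $\beta$, and $c$ all commute on $C\ell(r,s)$), and there it uses a slightly different device for the commutation step: rather than appealing to the uniqueness clause of the universal property, it observes that the set $A = \{\xi \in C\ell(r,s) : \alpha\beta(\xi) = \beta\alpha(\xi)\}$ is a subalgebra containing the generators $e_i$, hence is all of $C\ell(r,s)$. Your approach to part~(3) --- showing $\alpha \circ c \circ \alpha$ and $c$ are anti-homomorphisms agreeing on $V$ and then invoking uniqueness in $C\ell(V,Q)^{\mathrm{op}}$ --- is logically equivalent but packages the ``agree on generators'' step as a single call to Proposition~1.1. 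Both arguments are standard; yours is perhaps marginally cleaner in that it handles the anti-automorphism case without needing to separately verify that the equalizer of an automorphism and an anti-automorphism composed appropriately is still a subalgebra.
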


\begin{proposition} (Lemmas 6.4 and 6.5)  Let $\fG$ denote the $-1$-eigenspace of the anti-automorphism $c : C\ell(V,Q) \rightarrow C\ell(V,Q)$.  Then 

	1)  $\fG$ is a Lie algebra over F.
	
	2)  $\fG = F-span~\{e_{I} : |I| \equiv 1~\rm{ or}~ 2~ (mod~4) \}$.
\end{proposition}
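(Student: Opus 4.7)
For part (1), the plan is to verify directly that $\fG$ is closed under the commutator bracket $[x,y] = xy - yx$ inherited from the associative algebra $C\ell(V,Q)$; the other Lie algebra axioms (bilinearity, antisymmetry, Jacobi) are automatic from the associative structure. Given $x,y \in \fG$ with $c(x) = -x$ and $c(y) = -y$, I would compute using the fact that $c$ is an anti-automorphism: $c([x,y]) = c(xy - yx) = c(y)c(x) - c(x)c(y) = (-y)(-x) - (-x)(-y) = yx - xy = -[x,y]$. Thus $[x,y] \in \fG$.

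For part (2), the strategy is to observe that $\fB = \{1, e_I : I \in \fI\}$ is a basis by Proposition 1.3, and then to show that every element of $\fB$ is an eigenvector of $c$ with eigenvalue $\pm 1$. Once this is established, $\fG$ is automatically the span of those basis elements with eigenvalue $-1$. Since $c(1) = 1$ (any algebra anti-automorphism fixes the identity), the element $1$ lies in the $+1$-eigenspace. For a general $e_I = e_{i_1}\cdots e_{i_k}$ with $|I| = k$, applying $c$ gives
\[
c(e_I) = c(e_{i_k})\cdots c(e_{i_1}) = (-1)^k\, e_{i_k}\, e_{i_{k-1}}\cdots e_{i_1}.
\]
Now I would use the Clifford relation $e_i e_j = -e_j e_i$ for $i \neq j$ (which follows from polarizing the defining relation $v\cdot v = -Q(v,v)\cdot 1$) to reverse the product back into the standard order $e_{i_1}\cdots e_{i_k}$. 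Reversing the order of $k$ pairwise anticommuting elements requires $\binom{k}{2} = k(k-1)/2$ adjacent transpositions, so this produces a sign of $(-1)^{k(k-1)/2}$.

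Combining these two signs, $c(e_I) = (-1)^{k(k+1)/2}\, e_I$. The last step is to check when this sign equals $-1$, i.e., when $k(k+1)/2$ is odd; writing out the four residue classes of $k$ modulo $4$ shows that $k(k+1)/2$ is odd precisely when $k \equiv 1$ or $k \equiv 2 \pmod{4}$, which gives the desired description of $\fG$.

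The main obstacle is the bookkeeping of signs in the computation of $c(e_I)$: one must carefully separate the $(-1)^k$ coming from $c$ acting on each factor from the $(-1)^{k(k-1)/2}$ coming from reversing the product, and then correctly identify the residues of $k(k+1)/2$ modulo $2$. Everything else is a routine application of the anti-automorphism property of $c$ and the fact that $\fB$ is a basis consisting of eigenvectors of $c$.
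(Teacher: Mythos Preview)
Your proof is correct. The paper does not give its own proof of this statement; it is listed among the preliminaries and simply cited from [E] (Lemmas 6.4 and 6.5), so there is no argument in the present paper to compare against. Your computation $c(e_I)=(-1)^{k(k+1)/2}e_I$ via the anti-automorphism property and the $\binom{k}{2}$ anticommutations, together with the direct check of closure under brackets, is exactly the standard argument one would expect for these lemmas.
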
 

\begin{proposition}  Let $n = dim~V$ and let $\{e_{1}, e_{2}, ... , e_{n} \}$ denote a Q-orthogonal basis of V.  Let $\omega = e_{1} \cdot e_{2} \cdot ... \cdot e_{n}$.  Then

	a) (Proposition 5.1)  $\omega$ lies in the center of $C\ell(V,Q) \Leftrightarrow$ n is odd.
	
	b)  (Corollary 6.3)  $\fZ(G)$, the center of $\fG$ equals $F \omega$ if $n \equiv 1~(mod~4)$ and $\fZ(\fG) = \{0 \}$ otherwise.
\end{proposition}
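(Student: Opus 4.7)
My approach is to establish a single commutation identity for products of basis elements and then deduce both parts from it. For multi-indices $I, J$, viewed as subsets of $\{1, \ldots, n\}$, I claim
\[
e_I \cdot e_J \;=\; (-1)^{|I|\,|J| - |I \cap J|}\, e_J \cdot e_I.
\]
The base case $e_i e_j = -e_j e_i$ for $i \neq j$ comes from polarizing the defining identity $v^2 = -Q(v,v) \cdot 1$, and a short induction on $|I|$ yields the general formula: pushing $e_I$ past a single $e_j$ contributes $(-1)^{|I|}$ when $j \notin I$, and $(-1)^{|I|-1}$ together with a central scalar factor $e_j^2 \in F$ when $j \in I$. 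Specializing $I = \{1, \ldots, n\}$ and $J = \{i\}$ gives $\omega\, e_i = (-1)^{n-1} e_i\, \omega$; since $\{e_1, \ldots, e_n\}$ generates $C\ell(V,Q)$ as an $F$-algebra, this proves part (a).

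For part (b), Proposition 1.5 provides the basis $\{e_I : |I| \equiv 1 \text{ or } 2 \pmod 4\}$ of $\fG$. Since each product $e_I e_J$ is a nonzero scalar multiple of $e_{I \triangle J}$, with $J$ fixed the nonzero brackets $[e_I, e_J]$ lie in pairwise distinct basis directions as $I$ varies. Hence every central element $z = \sum_I c_I e_I \in \fZ(\fG)$ must have each $e_I$ in its support satisfying $|I|\,|J| \equiv |I \cap J| \pmod 2$ for every basis element $e_J$ of $\fG$. Testing against singletons $J = \{j\}$: if $|I|$ is even (so $|I| \equiv 2 \pmod 4$, in particular $|I| \geq 2$), this forces $j \notin I$ for every $j$, contradicting $I \neq \emptyset$; if $|I|$ is odd (so $|I| \equiv 1 \pmod 4$), it forces $j \in I$ for every $j$, whence $I = \{1, \ldots, n\}$ and $n = |I| \equiv 1 \pmod 4$. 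Conversely, when $n \equiv 1 \pmod 4$, part (a) shows $\omega$ lies in the center of $C\ell(V,Q)$ and therefore in $\fZ(\fG)$.

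The only genuine step is the commutation identity; everything after is parity bookkeeping. I expect its inductive proof to be the main obstacle, since one must separate the contributions from indices in $I \cap J$, which inject scalar factors $e_j^2 \in F$, from those of disjoint indices, which contribute signs, and verify that the $F$-scalars on the two sides of $e_I e_J = \pm e_J e_I$ agree while the signs combine precisely to $(-1)^{|I|\,|J| - |I \cap J|}$.
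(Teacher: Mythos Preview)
Your argument is correct and follows the same strategy the paper uses. The paper does not prove this proposition here (it is quoted from the companion paper [E]), but the machinery it develops in Sections~3 and~4 is exactly what you employ: your single commutation identity $e_I e_J = (-1)^{|I|\,|J|-|I\cap J|} e_J e_I$ is a compact repackaging of Propositions~3.1--3.2, and your reduction from an arbitrary central element $z=\sum c_I e_I$ to the basis vectors $e_I$ in its support, via the linear independence of the $e_{I\triangle J}$, is precisely the argument of Lemma~4.3. Testing against singletons $J=\{j\}$ to force $I=\{1,\dots,n\}$ is the same idea as Lemmas~4.4--4.6, specialized to $\fG$ rather than $\fK_{r,s}$.
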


\begin{proposition}  (Proposition 6.12)There exists an ideal $\fH$ of $\fG$ such that $\fG = \fZ(\fG) \oplus \fH$ and the Killing form of $\fH$ is nondegenerate on $\fH$. 
\end{proposition}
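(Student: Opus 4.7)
The plan is to construct $\fH$ concretely in terms of the $\overline{Q}$-orthogonal basis $\{e_I\}$ from Proposition 1.3, and to verify two properties: that $\fH$ is an ideal complementary to $\fZ(\fG)$, and that the Killing form $B$ of $\fG$ restricts nondegenerately to $\fH$ (which by general Lie theory then coincides with the Killing form of $\fH$ itself). Recall from Proposition 1.5 that $\fG$ is spanned by $\{e_I : |I| \equiv 1 \text{ or } 2 \pmod 4\}$.

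The key preliminary computation is that $B$ is diagonal in this basis. Writing $e_I e_K = \phi(I,K)\, e_{I \triangle K}$, where $\triangle$ denotes symmetric difference of multi-indices and $\phi(I,K) \in F$, an anti-commutation count yields $e_K e_I = (-1)^{|I||K|+|I \cap K|} e_I e_K$. Hence $[e_I, e_K] = c_{I,K}\, e_{I \triangle K}$ with $c_{I,K}$ equal to $0$ or $2\phi(I,K)$, and $\text{ad}(e_I)\text{ad}(e_J)(e_K)$ is a scalar multiple of $e_{I \triangle J \triangle K}$. This equals $e_K$ only when $I = J$, so $B(e_I, e_J) = 0$ for $I \neq J$. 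For the diagonal, the identity $e_I(e_I e_K) = e_I^2\, e_K$ forces $\phi(I,K)\,\phi(I, I \triangle K) = e_I^2$, a nonzero scalar by Proposition 1.3; so whenever $c_{I,K} \neq 0$ the $e_K$-diagonal entry of $\text{ad}(e_I)^2$ equals $4 e_I^2$, giving
\[
B(e_I, e_I) \;=\; 4\, e_I^2 \cdot N(I), \qquad N(I) \;:=\; \#\{K : e_K \in \fG,\ [e_I, e_K] \neq 0\}.
\]
Thus $B(e_I, e_I) \neq 0$ whenever $N(I) \neq 0$ in $F$, and $N(I) > 0$ precisely when $e_I \notin \fZ(\fG)$.

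The construction of $\fH$ then splits into cases on $n$ modulo $4$. If $n \not\equiv 1 \pmod 4$, then $\fZ(\fG) = \{0\}$ by Proposition 1.6(b), and we take $\fH := \fG$. If $n \equiv 1 \pmod 4$, we let $\fH$ be the $F$-span of the basis elements of $\fG$ other than $\omega$, so that $\fG = F\omega \oplus \fH$ as vector spaces. To see that $\fH$ is an ideal, note that $[e_I, e_J] = c_{I,J}\, e_{I \triangle J}$ is a multiple of $\omega$ only when $J = \bar I$; but for $e_I \in \fG$, the constraint $|I| \equiv 1$ or $2 \pmod 4$ together with $n \equiv 1 \pmod 4$ forces $|\bar I| \equiv 0$ or $3 \pmod 4$, so $e_{\bar I} \notin \fG$ and this case never occurs. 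Centrality of $\omega$ then yields $[\fG, \fH] = [\fH, \fH] \subseteq \fH$. Nondegeneracy of $B|_\fH$ follows from the diagonal formula, since no generator of $\fH$ lies in $\fZ(\fG)$.

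The main obstacle is the assertion that $N(I) \neq 0$ in $F$ whenever $e_I \notin \fZ(\fG)$. In characteristic zero (the case $F = \br$ of main interest) this reduces to the combinatorial fact $N(I) \geq 1$, which is exactly the statement that $e_I$ fails to commute with some basis element of $\fG$; for positive characteristic one must compute $N(I)$ exactly and track divisibility by the characteristic.
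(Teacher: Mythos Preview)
The present paper does not prove this statement; it is imported from the companion paper [E] as Proposition~6.12 there, and only its statement is used. However, the paper does reveal the construction of $\fH$ from [E] in passing (see the proof of Lemma~6.2 and the reference in the proof of Proposition~7.2): $\fH$ is exactly the span of $\{e_I \in \fG : I \neq \{1,\dots,n\}\}$, which is precisely what you define. So your approach matches the source.

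Your argument is correct over a field of characteristic zero. The diagonalization of the Killing form in the $\{e_I\}$ basis, the formula $B(e_I,e_I)=4\,e_I^2\,N(I)$, the ideal verification via the observation that $e_{\bar I}\notin\fG$ when $n\equiv 1\pmod 4$ and $e_I\in\fG$, and the resulting nondegeneracy are all sound. You are also right to isolate the positive-characteristic caveat: the preliminaries are stated for an arbitrary field of characteristic $\neq 2$, and nondegeneracy of $B|_\fH$ genuinely hinges on $N(I)\neq 0$ in $F$, which your proof does not establish in that generality. Since the paper's applications are over $\br$, this gap is harmless for everything that follows, but it is a real limitation of the argument as written.
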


\noindent $\mathbf{Remark}$  The Killing form of $\fH$ is the restriction to $\fH$ of the Killing form of $\fG$ since $\fH$ is an ideal of $\fG$. 

\section{The Clifford Lie algebras $\fG_{r,s}$}

\noindent  Let r,s,n be integers such that $r \geq 0, s > 0$ and $r+s = n$.  The case $s = 0$ was treated in [E].  Let $\{e_{1}, ... , e_{n} \}$ be the canonical basis for $V = F^{n}$, and let Q be the symmetric bilinear form on $F^{n}$ such that $Q(e_{i} , e_{j}) = 0$ if $i \neq j, Q(e_{i} , e_{i}) = 1$ if $1 \leq i \leq r$ and $Q(e_{i} , e_{i}) = -1$ if $r+1 \leq i \leq r+s$.  Let $C\ell(r,s)$ denote the Clifford algebra determined by $F^{n}$ and Q, and let $\fG_{r,s} = \{\varphi \in C\ell(r,s) : c(\varphi) = - \varphi \}$.  $C\ell(r,s)$ becomes a Lie algebra by defining $[x,y] = xy - yx$ for all $x,y \in C\ell(r,s)$. 
\newline

\noindent  $\mathbf{Canonical~automorphism~\beta~ of~C\ell(r,s)}$

\begin{proposition}Let $\beta : F^{n} \rightarrow F^{n}$ be the linear map such that $\beta(e_{i}) = e_{i}$ for $1 \leq i \leq r$ and $\beta(e_{r+j}) = - e_{r+j}$ for $1 \leq j \leq s$.  Then $\beta$ extends to an algebra automorphism of $C\ell(r,s)$.
\end{proposition}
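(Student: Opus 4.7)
The natural approach is to invoke the universal property of the Clifford algebra (Proposition 1.1).  The first step is to observe that $\beta : F^{n} \to F^{n}$ preserves the form $Q$.  Indeed, $\{e_{1}, \ldots , e_{n}\}$ is $Q$-orthogonal, $\beta(e_{i}) = \pm e_{i}$ for each $i$, so
\[
Q(\beta(e_{i}), \beta(e_{j})) = (\pm 1)(\pm 1) \, Q(e_{i}, e_{j}) = Q(e_{i}, e_{j})
\]
for all $i,j$, and thus $Q(\beta(v), \beta(w)) = Q(v,w)$ for all $v,w \in F^{n}$ by bilinearity.

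Next, I would form the composition $\sigma := i \circ \beta : F^{n} \to C\ell(r,s)$, which is $F$-linear.  For every $v \in F^{n}$,
\[
\sigma(v) \cdot \sigma(v) = i(\beta(v)) \cdot i(\beta(v)) = -Q(\beta(v), \beta(v)) \, 1 = -Q(v,v) \, 1,
\]
using the defining relation for $C\ell(r,s)$ applied to $\beta(v) \in F^{n}$, and the isometry property of $\beta$ just established.  By Proposition 1.1 applied with $\fA = C\ell(r,s)$, there is a unique algebra homomorphism $\tilde{\beta} : C\ell(r,s) \to C\ell(r,s)$ with $\tilde{\beta} \circ i = i \circ \beta$.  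In particular $\tilde{\beta}$ extends $\beta$ under the identification of $F^{n}$ with $i(F^{n}) \subset C\ell(r,s)$.

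Finally, to see that $\tilde{\beta}$ is an automorphism, I would use the identity $\beta^{2} = \mathrm{Id}$ on $F^{n}$.  Then $\tilde{\beta}^{2} : C\ell(r,s) \to C\ell(r,s)$ is an algebra homomorphism satisfying $\tilde{\beta}^{2} \circ i = i \circ \beta^{2} = i$, and the identity map of $C\ell(r,s)$ trivially satisfies the same equation.  By the uniqueness clause of Proposition 1.1, $\tilde{\beta}^{2} = \mathrm{Id}$ on $C\ell(r,s)$, so $\tilde{\beta}$ is its own inverse and hence an algebra automorphism.  No step here is a real obstacle; the only thing to watch is to make sure that $\beta$ is verified to be a $Q$-isometry before quoting the universal property, since that hypothesis is exactly what makes $\sigma(v) \cdot \sigma(v) = -Q(v,v) \, 1$ hold.
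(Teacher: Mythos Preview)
Your proof is correct and follows the same overall strategy as the paper: apply the universal property (Proposition~1.1) to extend $\beta$ to an algebra homomorphism of $C\ell(r,s)$, then invoke the uniqueness clause together with $\beta^{2}=\mathrm{Id}$ on $F^{n}$ to conclude that the extension is an involution and hence an automorphism. The only difference is cosmetic: the paper verifies $\beta(v)\cdot\beta(v)=-Q(v,v)$ by expanding the product directly in coordinates, whereas you first note that $\beta$ is a $Q$-isometry on $F^{n}$ and then apply the Clifford relation to $\beta(v)$---a slightly cleaner route to the same identity.
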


\begin{proof}  It suffices to show that $\beta$ extends to an algebra $\mathit{homomorphism}$ of $C\ell(r,s)$.   In this case it follows that $\beta \circ \beta = Id$ on V, and hence on $C\ell(r,s)$ by the uniqueness part of Proposition 1.1.  By Proposition 1.1 we need only show that $\beta(v) \cdot \beta(v) = - Q(v,v)$ for all $v \in F^{n}$.
\newline

\noindent Let $v \in F^{n}$ be given and write $v = \sum_{i=1}^{r} a_{i} e_{i} + \sum_{j=1}^{s} b_{j} e_{r+j}$ for suitable elements $a_{i},b_{j}$ of F.  Then $\beta(v) = \sum_{i=1}^{r} a_{i} e_{i} - \sum_{j=1}^{s} b_{j} e_{r+j}$ and $\beta(v) \cdot \beta(v) = \sum_{i=1}^{r} a_{i}^{2} e_{i}^{2} + \sum_{1=m<n}^{r} a_{m}a_{n} \{e_{m} \cdot e_{n} + e_{n} \cdot e_{m}\} - \sum_{i=1}^{r} \sum_{j=1}^{s} a_{i} b_{j} \{e_{i} \cdot e_{r+j} + e_{r+j} \cdot e_{i} \} + 
\newline \sum_{1=m < n}^{s} b_{m} b_{n} \{e_{r+m} \cdot e_{r+n} + e_{r+n} \cdot e_{r+m} \} + \sum_{j=1}^{s} b_{j}^{2} e_{r+j}^{2} = \sum_{i=1}^{r} a_{i}^{2} e_{i}^{2} + \sum_{j=1}^{s} b_{j}^{2} e_{r+j}^{2} = - \sum_{i=1}^{r} a_{i}^{2} + \sum_{j=1}^{s} b_{j}^{2}$. 
\newline

\noindent Since $\{e_{1}, ... , e_{n} \}$ is a Q-orthogonal basis of $V = F^{n}$ it follows that $Q(v,v) = \sum_{i=1}^{r} a_{i}^{2}~ Q(e_{i}, e_{i}) + \sum_{j=1}^{s} b_{j}^{2}~ Q(e_{r+j}, e_{r+j}) = \sum_{i=1}^{r} a_{i}^{2} - \sum_{j=1}^{s} b_{j}^{2} = - \beta(v) \cdot \beta(v)$. 
\end{proof}

\begin{corollary}  The anti-automorphism $c : C\ell(r,s) \rightarrow C\ell(r,s)$ and the automorphisms $\alpha, \beta : C\ell(r,s) \rightarrow C\ell(r,s)$ all commute.
\end{corollary}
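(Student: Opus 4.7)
The plan is to reduce all three commutation relations to the trivially checkable statement that the maps in question agree on the generators $V = F^n$, and then invoke a uniqueness principle to promote agreement on $V$ to agreement on all of $C\ell(r,s)$. The relation $\alpha c = c \alpha$ is already furnished by Proposition 1.4(3), so it remains only to establish $\alpha\beta = \beta\alpha$ and $c\beta = \beta c$.

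First I would handle $\alpha\beta = \beta\alpha$. Both compositions are algebra automorphisms of $C\ell(r,s)$ (composition of automorphisms), and a direct check on the basis vectors $e_i$ ($1 \leq i \leq r$) and $e_{r+j}$ ($1 \leq j \leq s$) shows that $\alpha\beta$ and $\beta\alpha$ act by $-1$ on the first $r$ vectors and by $+1$ on the last $s$ vectors. Hence $\alpha\beta$ and $\beta\alpha$ are two algebra homomorphisms $C\ell(r,s) \to C\ell(r,s)$ whose compositions with the inclusion $i : V \hookrightarrow C\ell(r,s)$ coincide. The uniqueness clause in the universal property (Proposition 1.1) then forces $\alpha\beta = \beta\alpha$.

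Next I would treat $c\beta = \beta c$. The subtlety here is that $c$ is an \emph{anti}-automorphism, so $c\beta$ and $\beta c$ are anti-automorphisms rather than homomorphisms, and Proposition 1.1 does not apply verbatim. However, an anti-automorphism is uniquely determined by its values on the generating set $V$: if $f, g : C\ell(r,s) \to C\ell(r,s)$ are two $F$-linear anti-automorphisms that agree on $V$, then for any product $v_1 \cdots v_k$ of vectors in $V$ we have $f(v_1 \cdots v_k) = f(v_k)\cdots f(v_1) = g(v_k)\cdots g(v_1) = g(v_1\cdots v_k)$, and such products span $C\ell(r,s)$. (Alternatively one can note that anti-automorphisms of $C\ell(V,Q)$ correspond to algebra homomorphisms into the opposite algebra $C\ell(V,Q)^{\mathrm{op}}$, which is itself a Clifford algebra for the same $(V,Q)$, and apply Proposition 1.1 there.) A short computation shows $c\beta$ and $\beta c$ each send $e_i \mapsto -e_i$ for $1 \leq i \leq r$ and $e_{r+j} \mapsto e_{r+j}$ for $1 \leq j \leq s$, so they agree on $V$ and therefore on all of $C\ell(r,s)$.

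There is no real obstacle; the only point requiring a moment of care is the extension of the uniqueness principle from homomorphisms to anti-homomorphisms, which is handled by either of the two observations above. With that in place, the corollary reduces to four one-line verifications on the basis vectors $\{e_1,\dots,e_n\}$.
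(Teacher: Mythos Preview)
Your proof is correct and takes essentially the same approach as the paper: verify agreement on the generators $e_1,\ldots,e_{r+s}$ and extend. The paper phrases the extension step via an equalizer argument---defining $A=\{\xi\in C\ell(r,s):\alpha\beta(\xi)=\beta\alpha(\xi)\}$, noting that $A$ is a subalgebra containing the generators, hence all of $C\ell(r,s)$---which happens to handle the anti-automorphism case uniformly without the separate treatment you gave, but the underlying idea is identical.
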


\begin{proof}  We show that $\alpha$ and $\beta$ commute.  The proofs of the other assertions are similar and are omitted.  Let $A = \{\xi \in C\ell(r,s) : \alpha \beta(\xi) =  \beta \alpha(\xi)\}$.   The set A is a subalgebra of $C\ell(r,s)$ since $\alpha$ and $\beta$ are automorphisms.  It is easy to check that $e_{i} \in A$ for $1 \leq i \leq r+s$.  The assertion now follows immediately since $\{e_{1}, ... , e_{r+s} \}$ generates $C\ell(r,s)$ as an algebra.
\end{proof}

\begin{corollary}  The automorphism $\beta$ leaves $\fG_{r,s}$ invariant.
\end{corollary}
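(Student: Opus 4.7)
The plan is to deduce this immediately from the preceding Corollary 2.2, which says that $\beta$ and $c$ commute on $C\ell(r,s)$. Since $\fG_{r,s}$ is defined as the $-1$-eigenspace of the anti-automorphism $c$, showing $\beta$-invariance is purely a matter of swapping $c$ and $\beta$ past one another.

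More precisely, let $\varphi \in \fG_{r,s}$, so that $c(\varphi) = -\varphi$. Applying $\beta$ and using Corollary 2.2 to interchange the order, I would write
\[
c(\beta(\varphi)) = \beta(c(\varphi)) = \beta(-\varphi) = -\beta(\varphi),
\]
where the last equality uses that $\beta$ is linear (being an algebra automorphism by Proposition 2.1). This shows $\beta(\varphi) \in \fG_{r,s}$, so $\beta(\fG_{r,s}) \subseteq \fG_{r,s}$.

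Since the argument is a two-line consequence of Corollary 2.2, there is no real obstacle. If one preferred a more basis-theoretic proof, one could alternatively invoke Proposition 1.5(2), which identifies $\fG$ with the $F$-span of those $e_I$ with $|I| \equiv 1$ or $2 \pmod{4}$; because $\beta(e_i) = \pm e_i$ for each generator, $\beta$ sends $e_I$ to $\pm e_I$ and hence preserves this span. But the Corollary-based argument is cleaner, so that is the one I would record.
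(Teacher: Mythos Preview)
Your proof is correct and follows essentially the same approach as the paper: both use Corollary 2.2 (that $\beta$ and $c$ commute) to conclude that $\beta$ preserves the eigenspaces of $c$, and in particular the $-1$-eigenspace $\fG_{r,s}$. Your version simply spells out the one-line eigenspace argument in slightly more detail.
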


\begin{proof}  $\beta$ leaves the eigenspaces of c invariant since $\beta$ and c commute.  $\fG_{r,s}$ is by definition the $-1$ eigenspace of c.
\end{proof}

\noindent $\mathbf{Cartan~decomposition~of~C\ell(r,s)}$

\noindent  Let $\fK_{r,s}$ be the $+1$ eigenspace of $\beta$ restricted to $\fG_{r,s}$, and let $\fP_{r,s}$ be the $- 1$ eigenspace of $\beta$ restricted to $\fG_{r,s}$.  Then $\fG_{r,s} = \fK_{r,s} \oplus \fP_{r,s}$.  This direct sum is called the $\mathit{Cartan~decomposition}$ of $\fG_{r,s}$.  It follows from the linear independence of $\{e_{I} : I \in \fI \}$ that $\fG_{r,s} = F-span \{e_{I} : e_{I} \in \fG_{r,s} \}, \fK_{r,s} = F-span \{e_{I} : e_{I} \in \fK_{r,s} \}$ and $\fP_{r,s} = F-span \{e_{I} : e_{I} \in \fP_{r,s} \}$.
\newline  

\noindent Note that $\beta$ is a Lie algebra automorphism of $\fG_{r,s}$ since $\beta$ is an algebra automorphism of $\fG_{r,s}$.  As an immediate consequence we obtain

\begin{proposition} The following bracket relations hold :

\hspace{.5in} $1) [\fK_{r,s}, \fK_{r,s}] \subset \fK_{r,s}$ 

\hspace{.5in} $ 2) [\fK_{r,s}, \fP_{r,s}] \subset \fP_{r,s}$ 

\hspace{.5in} $3)  [\fP_{r,s}, \fP_{r,s}] \subset \fK_{r,s}$. 
\end{proposition}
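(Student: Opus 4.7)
The plan is to exploit the fact, already asserted in the excerpt, that $\beta$ acts as a Lie algebra automorphism on $\fG_{r,s}$. The key mechanism is the general principle that if $\sigma$ is an involutive Lie algebra automorphism of a Lie algebra $\fL$ with $\pm 1$ eigenspaces $\fL_{+}$ and $\fL_{-}$, then the bracket of an $\epsilon_{1}$-eigenvector with an $\epsilon_{2}$-eigenvector is automatically an $\epsilon_{1}\epsilon_{2}$-eigenvector. Applied with $\sigma = \beta|_{\fG_{r,s}}$, $\fL_{+} = \fK_{r,s}$ and $\fL_{-} = \fP_{r,s}$, this immediately yields the three asserted inclusions.

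First I would record the identity $\beta([x,y]) = [\beta(x),\beta(y)]$ for all $x,y \in C\ell(r,s)$, which is immediate from $\beta$ being an algebra automorphism (Proposition 2.1) combined with $[x,y] = xy - yx$. I would also invoke the closure of $\fG_{r,s}$ under the bracket, provided by Proposition 1.5, so that the eigenvalue computation remains inside $\fG_{r,s}$ where the $\beta$-decomposition lives. Next I would run through the three cases mechanically. For $x, y \in \fK_{r,s}$, one has $\beta([x,y]) = [x,y]$, placing $[x,y]$ in the $+1$ eigenspace of $\beta|_{\fG_{r,s}}$, namely $\fK_{r,s}$. For $x \in \fK_{r,s}$ and $y \in \fP_{r,s}$, the computation gives $\beta([x,y]) = [x,-y] = -[x,y]$, so $[x,y] \in \fP_{r,s}$. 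Finally, for $x, y \in \fP_{r,s}$, one has $\beta([x,y]) = [-x,-y] = [x,y]$, so $[x,y] \in \fK_{r,s}$.

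There is no genuine obstacle here; the whole proposition is a formal consequence of $\beta$ being a commuting involutive automorphism. The only substantive inputs are that $\beta$ really is an algebra automorphism of $C\ell(r,s)$ (Proposition 2.1 and Corollary 2.2) and that $\fG_{r,s}$ really is a Lie algebra under the commutator (Proposition 1.5). One could alternatively verify the three relations basis-wise on $\{e_{I}\}$ by tracking parities of $|I|$ and of the number of indices in $\{r+1,\dots,r+s\}$, but the eigenspace argument makes such a calculation unnecessary.
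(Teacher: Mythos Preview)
Your proposal is correct and matches the paper's approach exactly: the paper simply notes that $\beta$ is a Lie algebra automorphism of $\fG_{r,s}$ (since it is an algebra automorphism of $C\ell(r,s)$) and states the proposition as an immediate consequence, which is precisely the eigenspace argument you spell out. Your proof just makes explicit the one-line reasoning the paper leaves to the reader.
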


\noindent  The next result characterizes $\fK_{r,s}$ and $\fP_{r,s}$ in a way that will be useful later for computing their dimensions.  Recall that for a multi-index $I = (i_{1}, i_{2}, ... , i_{k})$, where $i_{1} < i_{2} < ... < i_{k}$, we let $e_{I} = e_{i_{1}} \cdot e_{i_{2}} \cdot ... \cdot e_{i_{k}}$.  For $e_{I} \in \fG_{r,s}$ it is easy to see by induction on $|I|$ that $e_{I}^{2} = 1$ or $-1$ for all $I \in \fI$. 

\begin{proposition}  The following statements hold :

\hspace{.5in}  1)  $\fK_{r,s} = \br-$span$\{e_{I} : e_{I} \in \fG_{r,s}~\rm{and}~ e_{I}^{2} = -1\}$

\hspace{.5in}  2)  $\fP_{r,s} = \br-$span$\{e_{I} : e_{I} \in \fG_{r,s}~\rm{and}~  e_{I}^{2} = 1\}$
\end{proposition}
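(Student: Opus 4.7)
The plan is to reduce each claim to a pair of parity computations attached to a multi-index $I=(i_1,\dots,i_k)$. Introduce the auxiliary counts $a = |\{j : i_j \leq r\}|$ and $b = |\{j : i_j > r\}|$, so that $a+b=k=|I|$. Since $\beta(e_i)=e_i$ for $i\leq r$ and $\beta(e_{r+j})=-e_{r+j}$ and $\beta$ is an algebra automorphism, one has $\beta(e_I)=(-1)^b e_I$. Hence $e_I\in\fK_{r,s}$ iff $b$ is even and $e_I\in\fP_{r,s}$ iff $b$ is odd.

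Next I would compute $e_I^{2}$ by reversing the product: using $e_{i}\cdot e_{j}=-e_{j}\cdot e_{i}$ for $i\neq j$ in the Clifford algebra, the reversal $e_{i_1}\cdots e_{i_k}\mapsto e_{i_k}\cdots e_{i_1}$ costs a sign $(-1)^{\binom{k}{2}}$. Thus
\[
e_I^{2} \;=\; (-1)^{\binom{k}{2}}\, e_{i_k}\cdots e_{i_1}\cdot e_{i_1}\cdots e_{i_k} \;=\; (-1)^{\binom{k}{2}}\prod_{j=1}^{k} e_{i_j}^{2},
\]
by successively collapsing $e_{i_j}^2$ in the middle. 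Since $e_i^{2}=-Q(e_i,e_i)\,1$ equals $-1$ for $i\leq r$ and $+1$ for $i>r$, the product $\prod_j e_{i_j}^{2}=(-1)^a$, so
\[
e_I^{2} \;=\; (-1)^{\binom{k}{2}+a}.
\]

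The key observation is then the parity of $\binom{k}{2}$ on the support $\{|I|\equiv 1\text{ or }2\!\!\pmod 4\}$ of $\fG_{r,s}$ (Proposition 1.5(2)): if $k\equiv 1\pmod 4$ then $\binom{k}{2}$ is even, while if $k\equiv 2\pmod 4$ then $\binom{k}{2}$ is odd. Combined with $k=a+b$, this lets one check the equivalence $b\text{ even}\Longleftrightarrow e_I^{2}=-1$ directly in each of the four subcases ($k\bmod 4$ times $a\bmod 2$). For example, $k\equiv 1\pmod 4$ and $b$ even forces $a$ odd, giving $e_I^{2}=(-1)^{a}=-1$, and so on. Since $\fK_{r,s}$ and $\fP_{r,s}$ are each spanned by the eigenbasis $e_I$ they contain, the two characterizations follow. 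The only mildly delicate point is the sign bookkeeping for $\binom{k}{2}$ modulo $2$, which is a straightforward verification; there is no real obstacle.
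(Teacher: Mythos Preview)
Your proof is correct and follows essentially the same route as the paper: the paper's Lemmas 2.6--2.8 are precisely your formula $e_I^{2}=(-1)^{\binom{k}{2}}\prod e_{i_j}^{2}$, the parity reduction showing $e_I^{2}=1\Leftrightarrow |I^{+}|$ odd (your $b$ odd), and the observation $\beta(e_I)=(-1)^{|I^{+}|}e_I$. The only difference is organizational---the paper isolates these as separate lemmas and phrases the case analysis via $|I^{+}|$ rather than bundling the parity check into one paragraph---but the underlying computation is identical.
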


\begin{lemma}  Let $I = (i_{1}, ... , i_{k})$ be a multi-index.  Then $e_{I}^{2} = (-1)^{\frac{k(k-1)}{2}} e_{i_{1}}^{2} e_{i_{2}}^{2} ... e_{i_{k}}^{2}$.
\end{lemma}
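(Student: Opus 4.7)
The plan is to prove the formula by induction on $k = |I|$, using the fundamental anticommutation relation $e_i \cdot e_j = - e_j \cdot e_i$ for $i \neq j$.

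First I would record the anticommutation relation. By the characterizing property of the Clifford algebra applied to $v = e_i + e_j$ (with $i \neq j$), we have $(e_i + e_j)^2 = -Q(e_i+e_j, e_i + e_j) = -Q(e_i,e_i) - Q(e_j,e_j)$, since $Q(e_i,e_j) = 0$. Expanding the left side and using $e_i^2 = -Q(e_i,e_i)$ and $e_j^2 = -Q(e_j,e_j)$ yields $e_i e_j + e_j e_i = 0$, hence $e_i e_j = -e_j e_i$ whenever $i \neq j$.

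The base case $k = 1$ is immediate since $(-1)^{0} = 1$. For the inductive step, suppose the identity holds for multi-indices of length $k - 1$, and let $I = (i_1, \ldots, i_k)$ with $I' = (i_2, \ldots, i_k)$, so $e_I = e_{i_1} \cdot e_{I'}$. Then
\[
e_I^2 \;=\; e_{i_1} \cdot e_{I'} \cdot e_{i_1} \cdot e_{I'}.
\]
Since $i_1 \neq i_j$ for all $j \geq 2$, applying the anticommutation relation $k-1$ times allows us to push $e_{i_1}$ through $e_{I'}$, yielding $e_{I'} \cdot e_{i_1} = (-1)^{k-1}\, e_{i_1} \cdot e_{I'}$, and therefore
\[
e_I^2 \;=\; (-1)^{k-1} e_{i_1}^{2} \cdot e_{I'}^{2}.
\]
By the inductive hypothesis applied to the length $(k-1)$ multi-index $I'$, we have $e_{I'}^2 = (-1)^{(k-1)(k-2)/2} e_{i_2}^{2} \cdots e_{i_k}^{2}$, so
\[
e_I^2 \;=\; (-1)^{(k-1) + (k-1)(k-2)/2} \, e_{i_1}^{2} e_{i_2}^{2} \cdots e_{i_k}^{2}.
\]
A direct calculation gives $(k-1) + \tfrac{(k-1)(k-2)}{2} = \tfrac{(k-1)(2 + k - 2)}{2} = \tfrac{k(k-1)}{2}$, which completes the induction.

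No step here is particularly difficult; the only thing to be careful about is that the factors $e_{i_j}^{2}$ are scalars in $F$ (by Proposition 1.3), so they commute freely with everything and can be pulled to the front without sign changes. The combinatorial identity $(k-1) + \binom{k-1}{2} = \binom{k}{2}$ is exactly the statement that the total number of anticommutations needed to convert $e_I \cdot e_I$ into $e_{i_1}^{2} \cdots e_{i_k}^{2}$ equals $\binom{k}{2}$.
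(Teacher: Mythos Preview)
Your proof is correct and follows exactly the approach the paper indicates: the paper simply states that ``the proof follows routinely by induction on $|I|$, and we omit the details,'' and your argument supplies precisely those details. There is nothing to add.
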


\noindent  The proof follows routinely by induction on $|I|$, and we omit the details.
\newline

\noindent  For the next result we introduce some notation.  Given a multi-index $I = (i_{1}, ... , i_{k})$ let $I^{-} = \{i_{r} : 1 \leq r \leq k~\rm{and}~e_{i_{r}}^{2} = -1 \}$ and $I^{+} = \{i_{r} : 1 \leq r \leq k~\rm{and}~e_{i_{r}}^{2} = 1 \}$.  Equivalently, $I^{+} = I \cap \{1,2, ... , r \}$ and $I^{-} = I \cap \{r+1,r+2, ... , r+s \}$

\begin{lemma}  Let $I = (i_{1}, ... , i_{k})$ be a multi-index such that $e_{I} \in \fG_{r,s}$.  Then $e_{I}^{2} = 1 \Leftrightarrow |I^{+}|$ is an odd integer. 
\end{lemma}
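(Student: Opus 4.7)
The plan is to reduce everything to the formula established in the previous lemma. Write $k = |I|$ and let $a = |I^-|$ be the number of indices $i_j \in I$ for which $e_{i_j}^2 = -1$. Then Lemma 2.5 gives
$$e_I^2 = (-1)^{k(k-1)/2} \prod_{j=1}^{k} e_{i_j}^2 = (-1)^{k(k-1)/2 + a}.$$
So the question is purely one of parities: determine when $k(k-1)/2 + a$ is even.

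Since $e_I \in \fG_{r,s}$, Proposition 1.5(2) forces $k \equiv 1$ or $k \equiv 2 \pmod 4$, and I would split the argument into these two cases. A direct check shows $k(k-1)/2$ is even when $k \equiv 1 \pmod 4$ and odd when $k \equiv 2 \pmod 4$. Substituting these parities into the displayed formula and then using $|I^+| = k - a$ together with the parity of $k$ in each case converts the criterion to the stated one.

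Concretely, when $k \equiv 1 \pmod 4$ the displayed formula becomes $e_I^2 = (-1)^a$, so $e_I^2 = 1$ iff $a$ is even; since $k$ is odd, this is equivalent to $|I^+| = k - a$ being odd. When $k \equiv 2 \pmod 4$ the formula becomes $e_I^2 = (-1)^{a+1}$, so $e_I^2 = 1$ iff $a$ is odd; since $k$ is even, this too is equivalent to $|I^+|$ being odd.

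There is no real obstacle here — the proof is essentially parity bookkeeping — but the point to be careful about is precisely that the sign $(-1)^{k(k-1)/2}$ flips between the two admissible residues of $k$ mod $4$, and it is exactly this flip (combined with the corresponding change in the parity of $k$ itself) that lets the two cases collapse into a single uniform statement phrased in terms of $|I^+|$.
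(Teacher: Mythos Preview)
Your proof is correct and follows essentially the same approach as the paper's: both invoke the formula $e_I^2 = (-1)^{k(k-1)/2}\prod e_{i_j}^2$ from the preceding lemma, split into the two cases $k\equiv 1$ and $k\equiv 2\pmod 4$ dictated by Proposition~1.5, and then convert the parity condition on $|I^-|$ into one on $|I^+|$ via $|I^+|+|I^-|=k$. The only difference is cosmetic: you package the product as $(-1)^a$ with $a=|I^-|$, while the paper keeps the product explicit.
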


\begin{proof}  By Proposition 1.5 $e_{I} \in \fG_{r,s} \Leftrightarrow |I| \equiv 1$ or $2~(mod~4)$.  If $ k = |I| = 4\alpha +1$ for some integer $\alpha \geq 0$, then $e_{I}^{2} = e_{i_{1}}^{2} e_{i_{2}}^{2} ... e_{i_{k}}^{2}$ by Lemma 2.6.  Hence $e_{I}^{2} = 1  \Leftrightarrow |I^{-}|$ is even, but this occurs $\Leftrightarrow |I^{+}| = 4\alpha +1 - |I^{-}|$ is odd.  If $ k = |I| = 4\alpha + 2$ for some integer $\alpha \geq 0$, then $e_{I}^{2} = - e_{i_{1}}^{2} e_{i_{2}}^{2} ... e_{i_{k}}^{2}$ by Lemma 2.6.  Hence $e_{I}^{2} = 1 \Leftrightarrow |I^{-}|$ is odd, but this occurs $\Leftrightarrow |I^{+}| = 4\alpha +2 - |I^{-}|$ is odd.
\end{proof}

\begin{lemma}  $\beta(e_{I}) = - e_{I} \Leftrightarrow |I^{+}|$ is odd.
\end{lemma}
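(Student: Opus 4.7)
The plan is to compute $\beta(e_I)$ directly by exploiting the fact that $\beta$ is an algebra automorphism of $C\ell(r,s)$ (by Proposition 2.1). Since $e_I = e_{i_1} \cdot e_{i_2} \cdots e_{i_k}$ is a product of generators, multiplicativity of $\beta$ gives
\[
\beta(e_I) = \beta(e_{i_1}) \cdot \beta(e_{i_2}) \cdots \beta(e_{i_k}) = \left(\prod_{r=1}^{k} \varepsilon_{i_r}\right) e_I,
\]
where $\varepsilon_{i_r} = 1$ if $1 \leq i_r \leq r$ (the "positive-$Q$" indices) and $\varepsilon_{i_r} = -1$ if $r+1 \leq i_r \leq r+s$ (the "negative-$Q$" indices), by the defining formula for $\beta$ on $V = F^n$.

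Next I would match these sign factors with the set $I^+$. From the defining relation $v \cdot v = -Q(v,v) \cdot 1$ one has $e_{i_r}^2 = -Q(e_{i_r}, e_{i_r})$, so $e_{i_r}^2 = 1$ precisely when $Q(e_{i_r}, e_{i_r}) = -1$, i.e., precisely when $r+1 \leq i_r \leq r+s$. Thus the indices contributing a factor of $-1$ to the product above are exactly those belonging to $I^+$, and therefore
\[
\beta(e_I) = (-1)^{|I^+|}\, e_I.
\]
The biconditional $\beta(e_I) = -e_I \Leftrightarrow |I^+|$ odd is then immediate.

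There is essentially no obstacle here; the only thing to be careful about is keeping the sign conventions straight between $Q(e_i,e_i)$ and $e_i^2$, since the definition of $I^+$ is phrased in terms of $e_{i_r}^2$ rather than $Q$, while $\beta$ is defined in terms of the splitting $F^n = \operatorname{span}\{e_1,\dots,e_r\} \oplus \operatorname{span}\{e_{r+1},\dots,e_{r+s}\}$. Once this identification is made, the lemma follows in one line from the multiplicativity of $\beta$ and the linear independence of the basis $\{e_I\}$.
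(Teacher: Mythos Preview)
Your proof is correct and follows exactly the same approach as the paper: use that $\beta$ is an algebra automorphism to write $\beta(e_I) = \beta(e_{i_1})\cdots\beta(e_{i_k}) = (-1)^{|I^{+}|} e_I$. Your additional paragraph unpacking the sign convention between $Q(e_i,e_i)$ and $e_i^2$ is a helpful clarification (and in fact the paper's ``Equivalently'' sentence preceding the lemma has the roles of $I^{+}$ and $I^{-}$ swapped), but the underlying argument is identical to the paper's one-line proof.
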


\begin{proof}  If $e_{I} = e_{i_{1}} \cdot e_{i_{2}} \cdot ... \cdot e_{i_{k}}$, then $\beta(e_{I}) = \beta(e_{i_{1}}) \cdot  \beta(e_{i_{2}}) \cdot ... \cdot  \beta(e_{i_{k}}) = (- 1)^{|I^{+}|} e_{I}$ since $\beta$ is an automorphism of $\fG_{r,s}$. 
\end{proof}

\noindent  We are now ready to prove the Proposition.  Let $\fK^{\prime}_{r,s} = \br-$ span $\{e_{I} \in \fG_{r,s} : e_{I}^{2} = -1 \}$ and let $\fP^{\prime}_{r,s} = \br-$ span $\{e_{I} \in \fG_{r,s} : e_{I}^{2} = 1 \}$.  By Proposition 1.5 it is evident that  $\fG_{r,s} = \fK^{\prime}_{r,s} \oplus \fP^{\prime}_{r,s}$ and from the definitions it follows that $\fK_{r,s} \cap \fP_{r,s} = \{0 \}$.  Hence it suffices to show that $\fK^{\prime}_{r,s} \subseteq \fK_{r,s}$ and $\fP^{\prime}_{r,s} \subseteq \fP_{r,s}$.  Suppose that $e_{I}^{2} = -1$.  Then $|I^{+}|$ is even by Lemma 2.7, and hence $\beta(e_{I}) = e_{I}$ by Lemma 2.8 since $\beta(e_{I}) = \pm e_{I}$ for all multi-indices I.  Hence $e_{I} \in \fK_{r,s}$ and it follows that $\fK^{\prime}_{r,s} \subseteq \fK_{r,s}$.  Similarly, if $e_{I}^{2} = 1$, then $e_{I} \in \fP_{r,s}$ by Lemma 2.8  and $\fP^{\prime}_{r,s} \subseteq \fP_{r,s}$.
\newline 

\noindent  Let $r,s$ be integers such that $r \geq 0$ and $s > 0$.  Our next goal is to determine the dimensions of $\fG_{r,s}, \fK_{r,s}$ and $\fP_{r,s}$ as functions of r and s.  
\newline

\noindent $\mathbf{Dimension~ of~ \fG_{r,s}}$

\begin{proposition}  The dimension of $\fG_{r,s}$ is $2^{r+s-1} - 2^{\frac{r+s-1}{2}}~cos~[\frac{(r+s+1) \pi}{4}]$.
\end{proposition}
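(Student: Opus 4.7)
The plan is to count multi-indices directly using Proposition 1.5, which gives
\[
\dim \fG_{r,s} = \#\{I \in \fI : |I| \equiv 1 \text{ or } 2 \pmod{4}\} = \sum_{k \equiv 1,2\,(4)} \binom{n}{k},
\]
where $n = r+s$. The right-hand side of the claimed formula has the characteristic shape $2^{n-1} + (\text{oscillating correction of size } 2^{(n-1)/2})$, which suggests a roots-of-unity filter based on the fourth roots $1, i, -1, -i$.

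First I would apply the standard filter
\[
\sum_{k \equiv j\,(4)} \binom{n}{k} = \frac{1}{4}\sum_{m=0}^{3} i^{-jm}(1+i^m)^n.
\]
The $m=0$ term contributes $2^n$, the $m=2$ term vanishes (since $s \geq 1$ forces $n \geq 1$, so $(1+(-1))^n = 0$), and the $m=1, m=3$ terms contribute complex conjugate expressions built from $(1 \pm i)^n = 2^{n/2} e^{\pm i n\pi/4}$. Summing the contributions for $j=1$ and $j=2$ yields
\[
\dim \fG_{r,s} = 2^{n-1} + 2^{n/2 - 1}\bigl[\sin(n\pi/4) - \cos(n\pi/4)\bigr].
\]

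The only remaining step is trigonometric simplification: use the identity $\sin\theta - \cos\theta = -\sqrt{2}\cos(\theta + \pi/4)$ with $\theta = n\pi/4$, giving
\[
\sin(n\pi/4) - \cos(n\pi/4) = -\sqrt{2}\,\cos\!\left(\tfrac{(n+1)\pi}{4}\right),
\]
and absorb the $\sqrt{2}$ into the power of $2$: $2^{n/2-1}\cdot \sqrt{2} = 2^{(n-1)/2}$. Substituting $n = r+s$ produces the stated formula.

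The work is essentially mechanical; there is no real obstacle beyond keeping the signs and half-integer exponents straight during the roots-of-unity sum. The one subtlety worth flagging is that the $m=2$ term drops out only because $n \geq 1$, which is guaranteed here by the hypothesis $s > 0$; this is what makes the formula valid across all parities of $n$ without a separate case analysis.
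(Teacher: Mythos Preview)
Your proposal is correct and follows essentially the same route as the paper: both count the multi-indices with $|I|\equiv 1,2\pmod 4$ via the fourth-roots-of-unity filter (the paper packages this as Lemma~2.10 and Corollary~2.11), obtain $2^{n-1}+2^{(n-2)/2}\bigl[\sin(n\pi/4)-\cos(n\pi/4)\bigr]$, and then collapse the bracket using $\sin\theta-\cos\theta=-\sqrt{2}\cos(\theta+\pi/4)$. Your remark about the $m=2$ term vanishing because $n\ge 1$ is a nice explicit acknowledgment of a point the paper leaves tacit.
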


\begin{lemma}  Let a be an integer with $0 \leq a \leq 3$.  Then 
\newline

$\sum_{k \geq 0} \left(\begin{array}{ccc}n \\ a+4k\\ \end{array} \right) = \frac{1}{4}\{2^{n} + i^{-a}(1+i)^{n} + i^{-3a}(1-i)^{n} \}$.   
\end{lemma}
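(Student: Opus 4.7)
The plan is to apply the standard roots-of-unity filter at the primitive fourth root of unity $\zeta=i$. The key ingredient is the orthogonality relation
\[
\frac{1}{4}\sum_{j=0}^{3} i^{j(k-a)} = \begin{cases} 1 & \text{if } k\equiv a \pmod{4},\\ 0 & \text{otherwise,}\end{cases}
\]
valid for any integers $k$ and $a$. Indeed the inner sum is the geometric series $\sum_{j=0}^{3}(i^{k-a})^{j}$, which equals $4$ when $i^{k-a}=1$ and equals $\frac{1-(i^{k-a})^4}{1-i^{k-a}}=0$ otherwise, since $(i^{k-a})^{4}=1$.

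Multiplying this identity by $\binom{n}{k}$, summing over $k$, and interchanging the two summations, I would obtain
\[
\sum_{k\geq 0}\binom{n}{a+4k} = \sum_{k=0}^{n}\binom{n}{k}\cdot\frac{1}{4}\sum_{j=0}^{3} i^{j(k-a)} = \frac{1}{4}\sum_{j=0}^{3} i^{-ja}\sum_{k=0}^{n}\binom{n}{k}(i^{j})^{k} = \frac{1}{4}\sum_{j=0}^{3} i^{-ja}(1+i^{j})^{n},
\]
where the final equality is the binomial theorem.

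It remains to evaluate the four summands individually. For $j=0$ one has $(1+1)^{n}=2^{n}$; for $j=1$ one has $i^{-a}(1+i)^{n}$; the $j=2$ term is $(-1)^{-a}(1+i^{2})^{n}=(-1)^{-a}\cdot 0^{n}$, which vanishes since the lemma is to be applied with $n=r+s\geq 1$; and for $j=3$ one has $i^{-3a}(1+i^{3})^{n}=i^{-3a}(1-i)^{n}$ because $i^{3}=-i$. Collecting the three surviving contributions yields exactly the right hand side of the lemma.

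There is no real obstacle in this argument; it is a textbook application of the roots-of-unity filter for the group $\mathbb{Z}/4\mathbb{Z}$. The only mild subtlety is the implicit hypothesis $n\geq 1$, which is needed to annihilate the $j=2$ contribution. Everything else is purely bookkeeping.
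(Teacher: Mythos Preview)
Your argument is correct and is essentially the same approach as the paper's: the paper simply quotes the general roots-of-unity identity $\sum_{k\ge 0}\binom{n}{a+rk}=\frac{1}{r}\sum_{j=0}^{r-1}\omega^{-ja}(1+\omega^{j})^{n}$ from [BCK] and specializes to $r=4$, whereas you supply the standard proof of that identity in this special case. Your remark that $n\ge 1$ is needed to kill the $j=2$ term is a useful observation the paper leaves implicit.
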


\begin{proof}  This is the special case $r=4$ of the formula $\sum_{k \geq 0} \left(\begin{array}{ccc}n \\ a+rk\\ \end{array} \right) = \newline  \frac{1}{r} \sum_{j=0}^{r-1} \omega^{-ja}(1+\omega^{j})^{n}$, where $\omega = e^{\frac {2 \pi i}{r}}$.  See [BCK].
\end{proof}

\begin{corollary} The following identities hold :

\noindent 1) $\sum_{k \geq 0} \left(\begin{array}{ccc}n \\ 4k\\ \end{array} \right) = \frac{1}{4}\{2^{n} +(1+i)^{n} +  (1-i)^{n} \} = 2^{n-2} + 2^{\frac {n-2}{2}} cos(\frac{n \pi}{4})$.

\noindent 2) $\sum_{k \geq 0} \left(\begin{array}{ccc}n \\ 4k+1\\ \end{array} \right) = \frac{1}{4}\{2^{n} -i (1+i)^{n} + i (1-i)^{n} \} = 2^{n-2} + 2^{\frac {n-2}{2}} sin(\frac{n \pi}{4})$.
 
\noindent 3) $\sum_{k \geq 0} \left(\begin{array}{ccc}n \\ 4k+2\\ \end{array} \right) = \frac{1}{4}\{2^{n} - (1+i)^{n} - (1-i)^{n} \} = 2^{n-2} - 2^{\frac {n-2}{2}} cos(\frac{n \pi}{4})$.

\noindent 4) $\sum_{k \geq 0} \left(\begin{array}{ccc}n \\ 4k+3\\ \end{array} \right) = \frac{1}{4}\{2^{n} + i (1+i)^{n} - i (1-i)^{n} \} = 2^{n-2} - 2^{\frac {n-2}{2}} sin(\frac{n \pi}{4})$
\end{corollary}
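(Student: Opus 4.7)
The four identities each have two parts: a middle expression in $(1 \pm i)^{n}$, and a closed form on the right involving $\cos$ and $\sin$. The plan is to handle these in two steps, both straightforward.

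First, the middle expressions are immediate consequences of Lemma 2.10, applied with $a = 0, 1, 2, 3$ respectively. The coefficients $i^{-a}$ in Lemma 2.10 take values $1, -i, -1, i$, matching the coefficients of $(1+i)^{n}$ in identities (1)--(4); likewise $i^{-3a} = i^{a}$ takes values $1, i, -1, -i$, matching the coefficients of $(1-i)^{n}$.

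Second, to pass from the middle expressions to the trigonometric right-hand sides, I would use the polar decomposition $1 + i = \sqrt{2}\, e^{i\pi/4}$ and $1 - i = \sqrt{2}\, e^{-i\pi/4}$. Then $(1+i)^{n} = 2^{n/2} e^{in\pi/4}$ and $(1-i)^{n} = 2^{n/2} e^{-in\pi/4}$, so Euler's formula gives
$$(1+i)^{n} + (1-i)^{n} = 2^{(n+2)/2}\, \cos(n\pi/4), \qquad (1+i)^{n} - (1-i)^{n} = 2^{(n+2)/2}\, i \sin(n\pi/4).$$
Substituting these into each of the four middle expressions, together with $2^{n}/4 = 2^{n-2}$ and $2^{(n+2)/2}/4 = 2^{(n-2)/2}$, produces the claimed right-hand sides. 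For instance, in identity (2), $-i (1+i)^{n} + i (1-i)^{n} = -i \left[ (1+i)^{n} - (1-i)^{n}\right] = 2^{(n+2)/2} \sin(n\pi/4)$, and dividing by $4$ and adding $2^{n-2}$ yields $2^{n-2} + 2^{(n-2)/2}\sin(n\pi/4)$. Identity (4) is the same calculation with opposite sign, and (1) and (3) use the cosine identity directly.

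There is no genuine obstacle here; all four statements reduce to the same two-line computation once Lemma 2.10 is invoked and $1 \pm i$ is written in polar form. Conceptually, the four identities together are simply the discrete Fourier inversion extracting, from the generating function $(1+x)^{n}$, the binomial coefficients with index congruent to $a \pmod 4$, evaluated at the fourth roots of unity $x \in \{1, i, -1, -i\}$, which is precisely the content of Lemma 2.10.
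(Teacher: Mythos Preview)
Your proposal is correct and follows essentially the same approach as the paper's own proof: the paper likewise says the first equality in each case is direct substitution into Lemma~2.10, and the second follows from writing $1+i = 2^{1/2}e^{i\pi/4}$ and $1-i = 2^{1/2}e^{-i\pi/4}$. You have simply supplied more of the intermediate arithmetic (the values of $i^{-a}$ and $i^{-3a}$, and the Euler-formula step), which is fine.
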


\begin{proof}  In each of the cases 1) through 4) the first equality follows by direct substitution into the lemma above while the second equality follows directly from the facts that $1 + i = 2^{\frac{1}{2}} e^{\frac{i \pi}{4}}$ and  $1 - i = 2^{\frac{1}{2}} e^{\frac{ -i \pi}{4}}$
\end{proof}

\noindent  We now complete the proof of the Proposition.   We recall from Proposition 1.5 that $\fG = span \{e_{I} : e_{I} \in \fG\}$ and $e_{I} \in \fG \Leftrightarrow |I| \equiv 1$ or $2~(mod~4)$. Using the corollary above we obtain $dim~\fG = \sum_{k \geq 0} \left(\begin{array}{ccc}r+s \\ 4k+1\\ \end{array} \right) + \sum_{k \geq 0} \left(\begin{array}{ccc}r+s \\ 4k+2\\ \end{array} \right) = \newline 2^{r+s-1} - 2^{\frac{r+s-2}{2}}[cos(\frac{(r+s) \pi}{4}) - sin(\frac{(r+s) \pi}{4})] = 2^{r+s-1} - 2^{\frac{r+s-2}{2}} ~2^{\frac{1}{2}}~ cos(\frac{(r+s+1) \pi}{4}) = 2^{r+s-1} - 2^{\frac{r+s-1}{2}} ~ cos(\frac{(r+s+1) \pi}{4})$.
\newline

\noindent $\mathbf{Dimensions~ of~ \fP_{r,s}~and~\fK_{r,s}}$

\begin{proposition}  We have the following equalities

	1) If $r \geq 3$ and $s \geq 3$, then $dim~\fP_{r,s} = 2^{r+s-2} + 2^{\frac{r+s-1}{2}} sin~\frac{(r+1) \pi}{4}~sin~\frac{s \pi}{4}$.
	
	2)  If $r \geq 3$, then 
	
\hspace{.5in} a)  dim $\fP_{r,1} = 2^{r-1} + 2^{\frac{r-1}{2}}~sin(\frac{(r+1) \pi}{4})$.
		
\hspace{.5in}	b)  dim $\fP_{r,2} = 2^{r} + 2^{\frac{r+1}{2}}~sin(\frac{(r+1) \pi}{4})$

	3)  If $s \geq 3$, then
	
\hspace{.5in} a)  dim $\fP_{1,s} = 2^{s-1} + 2^{\frac{s}{2}}~sin(\frac{s \pi}{4})$.
		
\hspace{.5in}	b)  dim $\fP_{2,s} = 2^{s} + 2^{\frac{s}{2}}~sin(\frac{s \pi}{4})$

	4)  dim $\fP_{1,1} = 2$   \hspace{.2in} dim $\fP_{1,2} = 4$  \hspace{.2in} dim $\fP_{2,1} = 3$ \hspace{.2in} dim $\fP_{2,2} = 6$
\end{proposition}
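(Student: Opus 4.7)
By Proposition 2.5, $\dim\fP_{r,s}$ equals the number of multi-indices $I$ with $e_I \in \fG_{r,s}$ and $e_I^2 = 1$. Proposition 1.5 identifies the first condition with $|I| \equiv 1$ or $2 \pmod{4}$, and Lemma 2.7 then says the second condition is equivalent to $|I^+|$ being odd, where $I^+$ picks out the entries of $I$ satisfying $e_i^2 = 1$ (that is, those lying in $\{r+1, \ldots, r+s\}$). Setting $a = |I \cap \{1,\ldots,r\}|$ and $b = |I^+|$, the task reduces to counting pairs $(a,b)$, weighted by $\binom{r}{a}\binom{s}{b}$, such that $b$ is odd and $a+b \equiv 1$ or $2 \pmod{4}$.

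A case split on $b \pmod{4}$ shows that these constraints amount to either ($b \equiv 1 \pmod{4}$ with $a \equiv 0$ or $1 \pmod{4}$) or ($b \equiv 3 \pmod{4}$ with $a \equiv 2$ or $3 \pmod{4}$). Writing $A_j^n := \sum_{k \geq 0}\binom{n}{j+4k}$, this yields
\[
\dim \fP_{r,s} \;=\; A_1^s\,(A_0^r + A_1^r) \;+\; A_3^s\,(A_2^r + A_3^r).
\]

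Next I would apply Corollary 2.11 together with the identity $\cos\theta + \sin\theta = \sqrt{2}\sin(\theta + \pi/4)$ to evaluate
\[
A_0^r + A_1^r = 2^{r-1} + 2^{(r-1)/2}\sin\tfrac{(r+1)\pi}{4}, \qquad A_2^r + A_3^r = 2^{r-1} - 2^{(r-1)/2}\sin\tfrac{(r+1)\pi}{4},
\]
together with $A_1^s + A_3^s = 2^{s-1}$ and $A_1^s - A_3^s = 2^{s/2}\sin\frac{s\pi}{4}$. Regrouping the count as $(A_1^s + A_3^s)\cdot 2^{r-1} + (A_1^s - A_3^s)\cdot 2^{(r-1)/2}\sin\frac{(r+1)\pi}{4}$ then gives the master identity
\[
\dim \fP_{r,s} \;=\; 2^{r+s-2} + 2^{(r+s-1)/2}\sin\tfrac{(r+1)\pi}{4}\,\sin\tfrac{s\pi}{4},
\]
which is exactly Part 1 in the regime $r,s \geq 3$. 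Parts 2 and 3 then follow by specializing $s \in \{1,2\}$ (respectively $r \in \{1,2\}$) in the master identity and using $\sin(\pi/4) = \sin(3\pi/4) = 1/\sqrt{2}$ and $\sin(\pi/2) = 1$. Part 4 follows either by the same specialization with $r,s \in \{1,2\}$, or by direct enumeration of the basis of $C\ell(r,s)$; the two approaches agree and provide a useful sanity check.

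I expect no real obstacle---the argument is a counting reduction finished off by a trigonometric simplification. The one point requiring care is the asymmetry between $a$ and $b$: the parity constraint singles out $b$, so the angle-sum identity must be applied to the $r$-factor to produce $\sin\frac{(r+1)\pi}{4}$, while the $s$-factor is left in the form $\sin\frac{s\pi}{4}$ coming directly from $A_1^s - A_3^s$. Grouping the other way produces a different-looking and, as the case $r=1,s=2$ shows, inequivalent expression, so the asymmetry in the constraints must be honored in the bookkeeping.
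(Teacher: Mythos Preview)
Your argument is correct and rests on the same combinatorial core as the paper's: count the basis elements $e_I$ according to the residues of $|I^-|$ and $|I^+|$ modulo $4$, evaluate the resulting binomial sums via Corollary~2.11, and simplify trigonometrically. The only real difference is organizational. The paper splits into four cases indexed by $(|I| \bmod 4,\,|I^+| \bmod 4)$, computes each product separately, and then treats the regimes $s\le 2$ and $r\le 2$ by hand because some of those four cases become vacuous there. You instead parametrize directly by $(a,b)=(|I^-|,|I^+|)$, group the $r$-sums as $A_0^r+A_1^r$ and $A_2^r+A_3^r$ before applying the angle-addition identity, and thereby obtain the master formula in one stroke valid for all $r,s\ge 1$; parts 2)--4) then drop out by substitution. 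This is a genuine streamlining---the identities of Corollary~2.11 hold for every $n\ge 1$, so the paper's separate treatment of small $r$ or $s$ is not actually needed---and your uniform derivation is both shorter and more transparent than the paper's case-by-case reworking.
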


\begin{proposition}  We have the following equalities

	1) If $r \geq 3$ and $s \geq 3$, then $dim~\fK_{r,s} = 2^{r+s-2} - 2^{\frac{r+s-1}{2}} cos~\frac{(r+1) \pi}{4}~cos~\frac{s \pi}{4} =  2^{r+s-2} - 2^{\frac{r+s-3}{2}} \{cos \frac{(r+s+1)\pi}{4} + cos \frac{(r-s+1)\pi}{4}\}$.
	
	2)  If $r \geq 3$, then 
	
\hspace{.5in} a)  dim $\fK_{r,1} = 2^{r-1} - 2^{\frac{r-1}{2}}~cos(\frac{(r+1) \pi}{4})$.
		
\hspace{.5in}	b)  dim $\fK_{r,2} = 2^{r}$

	3)  If $s \geq 3$, then
	
\hspace{.5in} a)  dim $\fK_{1,s} = 2^{s-1}$.
		
\hspace{.5in}	b)  dim $\fK_{2,s} = 2^{s} + 2^{\frac{s}{2}}~cos(\frac{s \pi}{4})$

	4)  dim $\fK_{1,1} = 1$   \hspace{.2in} dim $\fK_{1,2} = 2$  \hspace{.2in} dim $\fK_{2,1} = 3$ \hspace{.2in} dim $\fK_{2,2} = 4$
\end{proposition}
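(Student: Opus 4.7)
The plan is to imitate the strategy used for $\fP_{r,s}$ in Proposition 2.12: characterize the multi-indices $I$ with $e_I \in \fK_{r,s}$, translate this into a constraint on the residues modulo $4$ of $a := |I \cap \{1,\dots,r\}|$ and $b := |I \cap \{r+1,\dots,r+s\}|$, and then evaluate the resulting sum of binomial coefficients via Corollary 2.11.

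By Proposition 2.5(1), $\fK_{r,s} = F\text{-span}\{e_I \in \fG_{r,s} : e_I^2 = -1\}$, and by Lemma 2.7 the condition $e_I^2 = -1$ is equivalent to $|I^+|$ being even, while $e_I \in \fG_{r,s}$ is equivalent to $|I| \equiv 1$ or $2 \pmod 4$. Hence, writing $|I^-| = a$, $|I^+| = b$, we have $\dim \fK_{r,s} = \sum \binom{r}{a}\binom{s}{b}$, where the sum is over pairs $(a,b)$ with $b$ even and $a+b \equiv 1$ or $2 \pmod 4$. A direct case split on $a, b \pmod 4$ yields exactly the four admissible residue pairs $(1,0), (2,0), (3,2), (0,2)$, so that
\[
\dim \fK_{r,s} = B_0(A_1 + A_2) + B_2(A_3 + A_0),
\]
where $A_j := \sum_{k\ge 0}\binom{r}{4k+j}$ and $B_j := \sum_{k\ge 0}\binom{s}{4k+j}$.

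Next I would substitute the closed forms from Corollary 2.11. After collecting terms, the pieces proportional to $2^{r-1}$ (from $A_0 + A_3$ or $A_1 + A_2$) combine with $2^{s-2} \pm 2^{(s-2)/2}\cos(s\pi/4)$ so that the mixed $\cos(s\pi/4)$ contribution cancels and the surviving cross term is $-2^{(r+s-1)/2}\cos(s\pi/4)\cos((r+1)\pi/4)$, using the identities $A_1+A_2 = 2^{r-1} - 2^{(r-1)/2}\cos((r+1)\pi/4)$ and $A_0+A_3 = 2^{r-1} + 2^{(r-1)/2}\cos((r+1)\pi/4)$ (each proved from the sine–cosine rewrite $\sin x \pm \cos x = \sqrt{2}\sin(x\pm\pi/4)$). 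Applying the product-to-sum identity
\[
\cos(s\pi/4)\cos((r+1)\pi/4) = \tfrac{1}{2}\bigl[\cos((r+s+1)\pi/4) + \cos((r-s+1)\pi/4)\bigr]
\]
yields the second form of 1).

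For the boundary cases 2)--4) I would simply evaluate the same sum $B_0(A_1+A_2) + B_2(A_0+A_3)$ with the specific small values $A_0,\dots,A_3$ or $B_0,\dots,B_3$ determined directly from the binomials (e.g.\ $s=1$ gives $B_0=1, B_2=0$; $s=2$ gives $B_0=B_2=1$; and symmetrically for $r=1,2$). In each case the general formula collapses because one of $B_0, B_2$ (resp.\ $A_0+A_3$ or $A_1+A_2$) becomes forced, and the claimed expressions follow by the same trigonometric rewrites already used in 1). The main obstacle is purely bookkeeping: keeping the $\sin$/$\cos$ and $\pm$ signs straight during the cancellation step that produces the second compact form in 1), since the natural intermediate expression mixes $\sin(r\pi/4)$, $\cos(r\pi/4)$, $\sin(s\pi/4)$ and $\cos(s\pi/4)$ before the cross terms vanish.
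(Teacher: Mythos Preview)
Your proposal is correct, but it takes a different route from the paper. The paper does not repeat the multi-index counting for $\fK_{r,s}$ at all: it simply uses the direct-sum decomposition $\fG_{r,s} = \fK_{r,s} \oplus \fP_{r,s}$ together with the already established formulas for $\dim \fG_{r,s}$ (Proposition~2.9) and $\dim \fP_{r,s}$ (Proposition~2.12), and computes $\dim\fK_{r,s} = \dim\fG_{r,s} - \dim\fP_{r,s}$. One application of the addition formula $\cos\tfrac{(r+s+1)\pi}{4} = \cos\tfrac{(r+1)\pi}{4}\cos\tfrac{s\pi}{4} - \sin\tfrac{(r+1)\pi}{4}\sin\tfrac{s\pi}{4}$ then gives part~1) in a single line, and the remaining parts are dismissed as routine.

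Your approach redoes the binomial counting from scratch for the $\fK$-side, via the decomposition $\dim\fK_{r,s} = B_0(A_1+A_2) + B_2(A_0+A_3)$. This is sound (indeed the $(P+Q)(R-S)+(P-Q)(R+S)=2PR-2QS$ cancellation you describe goes through cleanly), and it has the mild advantage of being self-contained and of handling the boundary cases $r\le 2$ or $s\le 2$ uniformly, since the closed forms of Corollary~2.11 remain valid there. The paper's subtraction method is shorter and avoids the bookkeeping you flag as the main obstacle, but it depends on having carried out the full four-case analysis for $\fP_{r,s}$ first.
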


\noindent  $\mathit{Proof~of~1)}$We now begin the proof of  Proposition 2.12.  By Propositions 1.5 and 2.5 and Lemma 2.8 we see that $\fP_{r,s} = \br-\rm{span}~\{e_{I} : |I| \equiv 1~\rm{or}~2~mod(4) \rm{and}~|I^{+}|~\rm{is~odd} \}$.   We divide the proof of  part 1) of Proposition 2.12  into four cases, some of which do not occur in the proofs of parts 2) and 3).  Let $r \geq 3$ and $s \geq 3$. 
\newline

\noindent Case 1A $|I| = 4\alpha+1, |I^{+}| = 4 \beta +1$ 

\noindent Case 1B $|I| = 4 \alpha + 1, |I^{+}| = 4 \beta + 3$ 

\noindent Case 2A $|I| = 4\alpha+2, |I^{+}| = 4 \beta +1$ 

\noindent  Case 2B $|I| = 4 \alpha + 2, |I^{+}| = 4 \beta + 3$.
\newline

\noindent  Case 1A :  Let $\beta \geq 0$ be an integer such that $|I^{+}| = 4 \beta +1$.  Then $|I^{-}| = 4 \alpha +1 - (4 \beta + 1) = 4(\alpha - \beta)$.  Hence the number of multi-indices I with $|I| = 4 \alpha + 1$ and $|I^{+}| = 4 \beta + 1$ equals $\left(\begin{array}{ccc}s \\ 4 \beta + 1\\ \end{array} \right) \{\left(\begin{array}{ccc}r \\ 0\\ \end{array} \right) + \left(\begin{array}{ccc}r \\ 4\\ \end{array} \right) + \left(\begin{array}{ccc}r \\ 8\\ \end{array} \right) + ...  \} =  \left(\begin{array}{ccc}s \\ 4 \beta + 1\\ \end{array} \right) (2^{r-2} + 2^{\frac{r-2}{2}} cos~\frac{r \pi}{4})$ by 1) of Corollary 2.11.  Using 2) of Corollary 2.11 and summing over the integers $\beta$ with $4 \beta + 1 \leq s$ we obtain 
\newline

\noindent (1A) $\{2^{r-2} + 2^{\frac{r-2}{2}} cos(\frac{r \pi}{4}) \} \{2^{s-2} + 2^{\frac{s-2}{2}} sin(\frac{s \pi}{4}) \}$ multi-indices I satisfying Case 1A.
\newline

\noindent  Case 1B :  Let $\beta \geq 0$ be an integer such that $|I^{+}| = 4 \beta +3$.  Then $|I^{-}| = 4 \alpha +1 - (4 \beta + 3) = 4(\alpha - \beta -1) + 2$.  Hence the number of multi-indices I with $|I| = 4 \alpha + 1$ and $|I^{+}| = 4 \beta + 3$ equals $\left(\begin{array}{ccc}s \\ 4 \beta + 3\\ \end{array} \right) \{\left(\begin{array}{ccc}r \\ 2\\ \end{array} \right) + \left(\begin{array}{ccc}r \\ 6\\ \end{array} \right) + \left(\begin{array}{ccc}r \\ 10\\ \end{array} \right) + ...  \} =  \left(\begin{array}{ccc}s \\ 4 \beta + 3\\ \end{array} \right) (2^{r-2} - 2^{\frac{r-2}{2}}~cos~\frac{r \pi}{4})$ by 3) of Corollary 2.11.  Using 4) of Corollary 2.11 and summing over the integers $\beta$ with $4 \beta + 3 \leq s$ we obtain
\newline

\noindent (1B) $\{2^{r-2} - 2^{\frac{r-2}{2}} cos(\frac{r \pi}{4}) \} \{2^{s-2} - 2^{\frac{s-2}{2}} sin(\frac{s \pi}{4}) \}$ multi-indices I satisfying Case 1B.
\newline

\noindent  Summing the results of 1A and 1B we obtain
\newline

\noindent (1) $2^{r+s-3} + 2^{\frac{r+s-2}{2}}~sin(\frac{s \pi}{4})~cos(\frac{r \pi}{4})$ multi-indices I satisfying Case 1.
\newline

\noindent \noindent  Case 2A :  Let $\beta \geq 0$ be an integer such that $|I^{+}| = 4 \beta +1$.  Then $|I^{-}| = 4 \alpha +2 - (4 \beta + 1) = 4(\alpha - \beta) + 1$.  Hence the number of multi-indices I with $|I| = 4 \alpha + 2$ and $|I^{+}| = 4 \beta + 1$ equals $\left(\begin{array}{ccc}s \\ 4 \beta + 1\\ \end{array} \right)  \{\left(\begin{array}{ccc}r \\ 1\\ \end{array} \right) + \left(\begin{array}{ccc}r \\ 5\\ \end{array} \right) + \left(\begin{array}{ccc}r \\ 9\\ \end{array} \right) + ...  \} =  \left(\begin{array}{ccc}s \\ 4 \beta + 1\\ \end{array} \right) (2^{r-2} + 2^{\frac{r-2}{2}}~sin~\frac{r \pi}{4})$ by 2) of Corollary 2.11.  Using 2) of  Corollary 2.11 again  and summing over the integers $\beta$ with $4 \beta + 1 \leq s$ we obtain
\newline 

\noindent (2A) $\{2^{r-2} + 2^{\frac{r-2}{2}} sin(\frac{r \pi}{4}) \} \{2^{s-2} + 2^{\frac{s-2}{2}} sin(\frac{s \pi}{4}) \}$ multi-indices I satisfying Case 2A.
\newline

\noindent  Case 2B :  Let $\beta \geq 0$ be an integer such that $|I^{+}| = 4 \beta +3$.  Then $|I^{-}| = 4 \alpha +2 - (4 \beta + 3) = 4(\alpha - \beta -1) + 3$.  Hence the number of multi-indices I with $|I| = 4 \alpha + 1$ and $|I^{+}| = 4 \beta + 3$ equals $\left(\begin{array}{ccc}s \\ 4 \beta + 3\\ \end{array} \right) \{\left(\begin{array}{ccc}r \\ 3\\ \end{array} \right) + \left(\begin{array}{ccc}r \\ 7\\ \end{array} \right) + \left(\begin{array}{ccc}r \\ 11\\ \end{array} \right) + ...  \} =  \left(\begin{array}{ccc}s \\ 4 \beta + 3\\ \end{array} \right) (2^{r-2} - 2^{\frac{r-2}{2}}~sin~\frac{r \pi}{4})$ by 4) of Corollary 2.11.  Using 4) of Corollary 2.11 again and summing over the integers $\beta$ with $4 \beta + 3 \leq s$ we obtain
\newline

\noindent (2B) $\{2^{r-2} - 2^{\frac{r-2}{2}} sin(\frac{r \pi}{4}) \} \{2^{s-2} - 2^{\frac{s-2}{2}} sin(\frac{s \pi}{4}) \}$ multi-indices I satisfying Case 2B.
\newline

\noindent  Summing the results of 2A and 2B we obtain
\newline

\noindent (2) $2^{r+s-3} + 2^{\frac{r+s-2}{2}}~sin(\frac{s \pi}{4})~sin(\frac{r \pi}{4})$ multi-indices I satisfying Case 2.
\newline

\noindent  Finally the sum of the number of multi-indices I in Case 1 and Case 2 equals $\{2^{r+s-3} +2^{\frac{r+s-2}{2}}~sin~(\frac{s \pi}{4})~cos~(\frac{r \pi}{4})\} + \{2^{r+s-3} +2^{\frac{r+s-2}{2}}~sin~(\frac{s \pi}{4})~sin~(\frac{r \pi}{4}) \} = \newline 2^{r+s-2} + 2^{\frac{r+s-2}{2}} ~sin~(\frac{s \pi}{4}) \{cos(\frac{r \pi}{4}) + sin(\frac{r \pi}{4}) \} = 2^{r+s-2} + 2^{\frac{r+s-1}{2}} ~sin~(\frac{s \pi}{4}) sin(\frac{(r+1) \pi}{4})$ since $sin(\frac{(r+1) \pi}{4}) = 2^{-\frac{1}{2}}~(sin~(\frac{r \pi}{4}) + cos~(\frac{r \pi}{4})$.  This completes the proof of  part 1) of Proposition 2.12.
\newline

\noindent  $\mathit{Proof~ of~ 2)}$.   If $s =1$ or $s = 2$, then $|I^{+}| \leq s \leq 2$.  Hence Cases 1B and 2B do not apply in this case.
\newline

\noindent  Let $s = 1$ and $r \geq 3$.  If we repeat the argument above with $s=1$, then Case 1A yields $ 2^{r-2} + 2^{\frac{r-2}{2}}~cos(\frac{r \pi}{4})$ multi-indices I.  Similarly, Case 2A yields  $ 2^{r-2} + 2^{\frac{r-2}{2}}~sin(\frac{r \pi}{4})$ multi-indices I.  The sum of the multi-indices in Cases 1A and 2A equals $2^{r-1} + 2^{\frac{r-2}{2}} [cos(\frac{r \pi}{4}) + sin(\frac{r \pi}{4})] =  2^{r-1} + 2^{\frac{r-1}{2}}  sin(\frac{(r+1) \pi}{4}) $.  This proves 2a).
\newline

\noindent Let $s = 2$ and $r \geq 3$.  Then Case 1A yields $\{2^{r-2} + 2^{\frac{r-2}{2}}~cos (\frac{r \pi}{4}) \} \{1+ ~sin(\frac{\pi}{2}) \} = 2^{r-1} + 2^{\frac{r}{2}}~cos(\frac{r \pi}{4})$ multi-indices I.   Case 2A yields $\{2^{r-2} + 2^{\frac{r-2}{2}}~sin (\frac{r \pi}{4}) \} \{1+ ~sin(\frac{\pi}{2}) \} = 2^{r-1} + 2^{\frac{r}{2}}~sin(\frac{r \pi}{4})$ multi-indices I.  The sum of the multi-indices in Cases 1A and 2A equals $2^{r} + 2^{\frac{r}{2}} [cos(\frac{r \pi}{4}) + sin(\frac{r \pi}{4})] = 2^{r} + 2^{\frac{r+1}{2}}~sin(\frac{(r+1) \pi}{4})$.  This proves 2b).
\newline

\noindent $\mathit{Proof~ of~ 3)}$.  Let $r=1$ and $s \geq 3$.  Then Cases 1B and 2B do not arise.  In case 1A we have $\sum_{\beta \geq 0} \left(\begin{array}{ccc}s \\ 4 \beta + 1\\ \end{array} \right) = 2^{s-2} + 2^{\frac{s-2}{2}}~sin(\frac{s \pi}{4})$ multi-indices I.  In Case 2A we also have  $\sum_{\beta \geq 0} \left(\begin{array}{ccc}s \\ 4 \beta + 1\\ \end{array} \right) = 2^{s-2} + 2^{\frac{s-2}{2}}~sin(\frac{s \pi}{4})$ multi-indices I.  The sum of the multi-indices from Cases 1A and 2A equals $2^{s-1} + 2^{\frac{s}{2}}~sin(\frac{s \pi}{4})$.  This proves 3a).
\newline

\noindent Now let $r=2$ and $s \geq 3$.  Then Case 2B does not occur.  In case 1A we have $\sum_{\beta \geq 0} \left(\begin{array}{ccc}s \\ 4 \beta + 1\\ \end{array} \right) = 2^{s-2} + 2^{\frac{s-2}{2}}~sin(\frac{s \pi}{4})$ multi-indices I.  In Case 1B we have $\sum_{\beta \geq 0} \left(\begin{array}{ccc}s \\ 4 \beta + 3\\ \end{array} \right) = 2^{s-2} - 2^{\frac{s-2}{2}}~sin(\frac{s \pi}{4})$ multi-indices I.  In Case 2A we have $2~(\sum_{\beta \geq 0} \left(\begin{array}{ccc}s \\ 4 \beta + 1\\ \end{array} \right)) = 2~(2^{s-2} + 2^{\frac{s-2}{2}}~sin(\frac{s \pi}{4})) = 2^{s-1} + 2^{\frac{s}{2}}~sin(\frac{s \pi}{4})$ multi-indices I.  The sum of the multi-indices from Cases 1A, 1B and 2A equals $2^{s} + 2^{\frac{s}{2}}~sin(\frac{s \pi}{4})$.  This proves 3b).
\newline

\noindent  $\mathit{Proof~ of~ 4)}$.  These are easy computations directly from the definition of $\fP_{r,s}$ in the cases $r \leq 2$ and $s \leq 2$.
\newline

\noindent  We now prove Proposition 2.13, but only in Case 1).  The other cases follow routinely from trigonometric identities.
\newline

\noindent Let $r \geq 3$ and $s \geq 3$.  Since $\fG_{r,s} = \fK_{r,s} \oplus \fP_{r,s}$ it follows from Propositions 2.9 and 2.12 that dim $\fK_{r,s} = dim~\fG_{r,s} - dim~\fP_{r,s} = \{ 2^{r+s-1} - 2^{\frac{r+s-1}{2}} cos(\frac{(r+s+1) \pi}{4}) \} - \{2^{r+s-2} + 2^{\frac{r+s-1}{2}} sin(\frac{s \pi}{4}) sin(\frac{(r+1) \pi}{4}) \} = 2^{r+s-2} - 2^{\frac{r+s-1}{2}} \{cos(\frac{(r+1) \pi}{4}) cos(\frac{s \pi}{4}) - sin(\frac{(r+1) \pi}{4}) sin(\frac{s \pi}{4}) +  sin(\frac{(r+1) \pi}{4}) sin(\frac{s \pi}{4})\}  = 2^{r+s-2} - 2^{\frac{r+s-1}{2}} cos(\frac{(r+1) \pi}{4} cos(\frac{s \pi}{4})$.
\newline

\section{Commutation relations}

\noindent  Let $I,J \in \fI$ be given.  It is easy to see by induction on the minimum of $|I|,|J|$ that $e_{I} e_{J} = \pm e_{J} e_{I}$.  The next two results determine the sign $\pm$ for each pair I,J.  These relations are valid in an arbitrary Clifford algebra $C\ell(V,Q)$, where V is a finite dimensional vector space over a field F with characteristic $\neq 2$.

\begin{proposition}  Let $I,J \in \fI$ be given with I,J disjoint.  Then $e_{I} e_{J} = (-1)^{|I||J|} e_{J} e_{I}$.
\end{proposition}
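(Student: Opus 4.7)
The plan is to reduce the proposition to the elementary anticommutation relation $e_i e_j = -e_j e_i$ for $i \neq j$, and then bookkeep the sign accumulated when moving $e_I$ past $e_J$.

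First I would establish the base case: for any two distinct indices $i \neq j$ among the Q-orthogonal generators, $e_i e_j = -e_j e_i$. This follows from the defining Clifford relation $v \cdot v = -Q(v,v)\cdot 1$ applied to $v = e_i + e_j$. Expanding gives $e_i^2 + e_i e_j + e_j e_i + e_j^2 = -Q(e_i+e_j, e_i+e_j)\cdot 1 = -(Q(e_i,e_i) + Q(e_j,e_j))\cdot 1$ using Q-orthogonality, and since $e_i^2 = -Q(e_i,e_i)\cdot 1$ and similarly for $e_j^2$, the cross terms must vanish, so $e_i e_j + e_j e_i = 0$.

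Next I would argue the general case by induction on $|I|$. Write $I = (i_1, \ldots, i_p)$ and $J = (j_1, \ldots, j_q)$ with $I \cap J = \emptyset$. For $|I| = 1$, so $I = (i_1)$, I would show by a secondary induction on $|J|$ that $e_{i_1} e_J = (-1)^{|J|} e_J e_{i_1}$: to move $e_{i_1}$ from the left of $e_{j_1} e_{j_2} \cdots e_{j_q}$ to the right, one performs $q = |J|$ successive swaps, each contributing a factor $-1$ by the base case (since $i_1 \neq j_b$ for every $b$ by disjointness). For the inductive step on $|I|$, write $e_I = e_{i_1} e_{I'}$ where $I' = (i_2, \ldots, i_p)$. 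Then
\[
e_I e_J = e_{i_1} e_{I'} e_J = (-1)^{(p-1)|J|} e_{i_1} e_J e_{I'} = (-1)^{(p-1)|J|} (-1)^{|J|} e_J e_{i_1} e_{I'} = (-1)^{p|J|} e_J e_I,
\]
using the inductive hypothesis on $I'$ (which is disjoint from $J$) and the $|I|=1$ case.

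I do not expect any serious obstacle here; the result is purely combinatorial bookkeeping once the basic anticommutation of distinct Q-orthogonal generators is in hand. The only mild subtlety is keeping the disjointness hypothesis active throughout the induction, which ensures that every single transposition we invoke is of the form $e_a e_b$ with $a \neq b$, so the base case applies uniformly and no $e_a^2$ terms intrude to complicate the sign count.
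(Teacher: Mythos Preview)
Your argument is correct and is essentially the same approach the paper has in mind: the paper simply states that the proof ``follows by induction on $|I| + |J|$'' and omits the details, while you carry out the induction on $|I|$ (with an inner induction on $|J|$ for the base case). Both inductive schemes reduce the claim to the elementary anticommutation $e_i e_j = -e_j e_i$ for distinct Q-orthogonal generators, and your sign bookkeeping is accurate.
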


\begin{proposition}  Let $I,J \in \fI$ be given with $I \cap J$ nonempty. 

   1)  If $|I \cap J|$ is even, then $e_{I} e_{J} = - e_{J} e_{I} \Leftrightarrow |I|$ and $|J|$ are both odd.
   
   2)  If $|I \cap J|$ is odd, then $e_{I} e_{J} =  e_{J} e_{I} \Leftrightarrow |I|$ and $|J|$ are both odd.
\end{proposition}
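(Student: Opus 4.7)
The plan is to establish the unified formula
$$e_I e_J = (-1)^{|I||J| - |I \cap J|}\, e_J e_I,$$
which specializes to Proposition 3.1 when $I \cap J = \emptyset$ and yields both cases of Proposition 3.2 by a parity check, as follows. If $|I \cap J|$ is even, the exponent has the same parity as $|I||J|$, so $e_I e_J = -e_J e_I$ iff $|I||J|$ is odd iff both $|I|$ and $|J|$ are odd. If $|I \cap J|$ is odd, the exponent has opposite parity to $|I||J|$, so $e_I e_J = +e_J e_I$ iff $|I||J|$ is odd iff both $|I|$ and $|J|$ are odd.

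To prove the formula, I would decompose $I$ and $J$ via their intersection. Set $K = I \cap J$, $A = I \setminus K$, $B = J \setminus K$, so that $A, B, K$ are pairwise disjoint and $I = A \sqcup K$, $J = K \sqcup B$. The anticommutation $e_i e_j = - e_j e_i$ for $i \neq j$ guarantees that $e_I = \epsilon_1 e_A e_K$ and $e_J = \epsilon_2 e_K e_B$ for some signs $\epsilon_i \in \{\pm 1\}$ determined by reordering permutations; the precise value of $\epsilon_1 \epsilon_2$ is irrelevant because it will appear as a common factor on both sides.

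Now I would compute both products. By Proposition 1.3, $e_K^2 \in F$ is a central scalar, so
$$e_I e_J = \epsilon_1 \epsilon_2\, e_A e_K e_K e_B = \epsilon_1 \epsilon_2\, e_K^2\, e_A e_B.$$
For $e_J e_I = \epsilon_1 \epsilon_2\, e_K e_B e_A e_K$, I apply Proposition 3.1 to the three disjoint pairs: $e_B e_A = (-1)^{|A||B|} e_A e_B$, and moving the right-hand $e_K$ past $e_B$ and then past $e_A$ picks up $(-1)^{|K||B|}(-1)^{|K||A|}$. Collecting these signs, $e_J e_I = \epsilon_1 \epsilon_2\, (-1)^{|A||B| + |K|(|A|+|B|)}\, e_K^2\, e_A e_B$, so $e_I e_J = (-1)^{|A||B| + |K|(|A|+|B|)}\, e_J e_I$. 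Finally, the identity $|I||J| - |I \cap J| = |A||B| + |K|(|A| + |B|) + |K|(|K|-1)$, together with the fact that $|K|(|K|-1)$ is always even, shows that the exponent equals $|I||J| - |I \cap J| \pmod 2$.

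There is no real obstacle; the proof reduces to bookkeeping with Proposition 3.1 and the observation that $e_K^2$ is a central scalar. The only mildly subtle point is arranging the computation so that the unknown signs $\epsilon_1, \epsilon_2$ (coming from reordering within $I$ and $J$) cancel; this is automatic because they appear identically in both $e_I e_J$ and $e_J e_I$.
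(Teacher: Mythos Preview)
Your proof is correct and rests on the same idea as the paper's: decompose $I$ and $J$ into their intersection $K$ and disjoint complements, then repeatedly apply Proposition~3.1 together with the fact that $e_K^2$ is a scalar. The paper writes $e_I = e_{I\cap J}e_{I_1}$ and $e_J = e_{I\cap J}e_{J_1}$ and then splits into the two cases $|I\cap J|$ even or odd, computing the sign in each case separately; your version instead derives the single closed formula $e_I e_J = (-1)^{|I||J|-|I\cap J|} e_J e_I$ and reads off both cases by a parity check. This unified formula is a modest but genuine streamlining: it subsumes Proposition~3.1 as the special case $I\cap J=\emptyset$ and replaces the paper's somewhat ad hoc manipulation of the condition $(^{*})$ in case~2) with a one-line parity argument.
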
 

\noindent  We omit the proof of  Proposition 3.1, which follows by induction on $|I| + |J|$. We prove Proposition 3.2.  The elements $e_{I}, e_{J}$ commute (anti-commute) $\Leftrightarrow \pm e_{I}, \pm e_{J}$ commute (anti-commute) for each of the four choices of signs.  Hence, without loss of generality, we may assume that $e_{I} = e_{I \cap J}~ e_{I_{1}}$ and $e_{J} = e_{I \cap J}~ e_{J_{1}}$, where $I = (I \cap J) \cup~I_{1}$ (disjoint union) and $J = (I \cap J) \cup~J_{1}$ (disjoint union).  Note that any two of the sets $I_{1},J_{1}, I \cap J$ are disjoint.
\newline

\noindent 1)  Suppose first that $|I \cap J|$ is even.  Using the previous result we compute $e_{I} e_{J} = e_{I \cap J}~ e_{I_{1}} e_{I \cap J}~ e_{J_{1}} = (-1)^{|I \cap J| |I_{1}|} (e_{I \cap J})^{2} e_{I_{1}} e_{J_{1}} = (e_{I \cap J})^{2} e_{I_{1}} e_{J_{1}} = \newline  (-1)^{|I_{1}| |J_{1}|} (e_{I \cap J})^{2} e_{J_{1}} e_{I_{1}}$.  We have proved

	a)  $e_{I} e_{J} = (-1)^{|I_{1}| |J_{1}|} (e_{I \cap J})^{2} e_{J_{1}} e_{I_{1}}$.
	
\noindent  Similarly we compute $e_{J} e_{I} = (-1)^{|I \cap J| |J_{1}|} (e_{I \cap J})^{2} e_{J_{1}} e_{I_{1}} = (e_{I \cap J})^{2} e_{J_{1}} e_{I_{1}}$.  We have proved

	b)   $e_{J} e_{I} = (e_{I \cap J})^{2} e_{J_{1}} e_{I_{1}}$.
\noindent From a) and b) it follows that $e_{I} e_{J} = - e_{J} e_{I} \Leftrightarrow (-1)^{|I_{1}| |J_{1}|} = -1 \Leftrightarrow |I_{1}|$ and $|J_{1}|$ are both odd $\Leftrightarrow |I|$ and $|J|$ are both odd since $|I \cap J|$ is even.  This proves part 1).
\newline

\noindent 2)  Suppose next that $|I \cap J|$ is odd.  As above we compute \newline $e_{I} e_{J} = (-1)^{|I \cap J| |I_{1}|} (e_{I \cap J})^{2} e_{I_{1}} e_{J_{1}} = (-1)^{|I_{1}|} (e_{I \cap J})^{2} e_{I_{1}} e_{J_{1}} = \newline (-1)^{(|I_{1}| + |I_{1}| |J_{1}|)} (e_{I \cap J})^{2} e_{J_{1}} e_{I_{1}}$.  We have proved

	a)  $e_{I} e_{J} = (-1)^{(|I_{1}| + |I_{1}| |J_{1}|)} (e_{I \cap J})^{2} e_{J_{1}} e_{I_{1}}$.

\noindent Similarly we obtain 

	b)  $e_{J} e_{I} = (-1)^{|J_{1}|} (e_{I \cap J})^{2} e_{J_{1}} e_{I_{1}}$.
	
\noindent  From a) and b) we see that $e_{I} e_{J} = e_{J} e_{I} \Leftrightarrow$

	$(^{*})\hspace{.2in} (-1)^{(|I_{1}| + |I_{1}| |J_{1}|)} = (-1)^{|J_{1}|}$.
	
\noindent  If $|J_{1}|$ is odd, then $(^{*})$ becomes $-1 = 1$, and we conclude that $|J_{1}|$ is even.  The condition $(^{*})$ now becomes $1 = (-1)^{|I_{1}| (1 + |J_{1}|)}$, which implies that $|I_{1}|$ is also even.  Hence $|I|$ and $|J|$ are both odd since $|I \cap J|$ is odd.  Conversely, if $|I|$ and $|J|$ are both odd, then $|I_{1}|$ and $|J_{1}|$ are both even and $(^{*})$ holds.  This proves part 2).

\section{Center of $\fK_{r,s}$}

\noindent  One of the main tools in determining $\fG_{r,s}$ when $F = \br$, will be a description of $\fZ(\fK_{r,s})$, the center of $\fK_{r,s}$.  We achieve that in this section, where we still assume that F is an arbitrary field of characteristic $\neq 2$. The main results  are the following :

\begin{proposition}  $\fZ(\fK_{r,s}) = $F-span $\{e_{I} : e_{I} \in \fZ(\fK_{r,s}) \}$
\end{proposition}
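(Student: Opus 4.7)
The plan is to show that any element $\xi$ in $\fZ(\fK_{r,s})$, when expanded in the basis, has each basis element appearing with nonzero coefficient already lying in $\fZ(\fK_{r,s})$. The forward inclusion (the span of basis elements inside the center is contained in the center) is automatic, so only the reverse inclusion requires work.

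First I would fix $\xi \in \fZ(\fK_{r,s})$ and use the basis description of $\fK_{r,s}$ from the Cartan decomposition discussion to write
\[ \xi = \sum_{I \in S} a_I\, e_I, \qquad S = \{I \in \fI : e_I \in \fK_{r,s}\}, \]
with $a_I \in F$. Then for each $e_J \in \fK_{r,s}$ I would compute $[\xi, e_J]$. Here I invoke the opening observation of Section~3 that any two basis elements $e_I, e_J$ either commute or anti-commute; writing $\epsilon(I,J) = \pm 1$ for the sign defined by $e_J e_I = \epsilon(I,J) e_I e_J$, the commutator reduces to
\[ [e_I, e_J] = (1 - \epsilon(I,J))\, e_I e_J, \]
which vanishes when $\epsilon(I,J) = 1$ and equals $2\, e_I e_J$ when $\epsilon(I,J) = -1$. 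Consequently
\[ 0 = [\xi, e_J] = 2 \sum_{\substack{I \in S \\ \epsilon(I,J) = -1}} a_I\, e_I e_J. \]

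The key step is then a linear independence argument. For fixed $J$, the map $I \mapsto I \triangle J$ (symmetric difference) is a bijection on $\fI$, and a direct computation in $C\ell(r,s)$ shows $e_I e_J = \pm e_{I \triangle J}$. Hence the family $\{e_I e_J : I \in \fI\}$ is, up to signs, a permutation of the $\overline{Q}$-orthogonal basis $\{e_K : K \in \fI\}$ of Proposition~1.3, and is therefore linearly independent over $F$. This forces $a_I = 0$ for every $I \in S$ with $\epsilon(I,J) = -1$. Equivalently, whenever $a_I \neq 0$, the basis element $e_I$ commutes with $e_J$.

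Since this conclusion holds for every $e_J \in \fK_{r,s}$, each such $e_I$ commutes with the whole generating set $\{e_J : J \in S\}$ of $\fK_{r,s}$, hence with all of $\fK_{r,s}$. Together with $e_I \in \fK_{r,s}$ (by the choice of $S$), this gives $e_I \in \fZ(\fK_{r,s})$, proving that $\xi$ is an $F$-linear combination of basis elements already lying in $\fZ(\fK_{r,s})$. The only mildly delicate point is the linear independence of $\{e_I e_J\}_{I}$ for fixed $J$; once that is established from the bijectivity of $I \mapsto I \triangle J$ and Proposition~1.3, the rest is a short bookkeeping argument.
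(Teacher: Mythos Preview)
Your proof is correct and follows essentially the same route as the paper's. The paper packages the argument as a lemma: writing $\xi = \sum \xi_I e_I$ and computing $0 = [e_K,\xi] = \sum \xi_I \lambda_{IK}\, e_K e_I$, it then multiplies on the left by $e_K$ (using $e_K^2 \neq 0$) to reduce directly to the linear independence of $\{e_I\}$, whereas you obtain the same linear independence of $\{e_I e_J\}_I$ via the symmetric-difference bijection; these are the same manoeuvre in slightly different clothing.
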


\begin{proposition}  Let $I \in \fI$ be given.  Then $e_{I} \in \fZ(\fK_{r,s}) \Leftrightarrow$ one of the following occurs :

	a)  $e_{I} = \omega = e_{1} e_{2} ... e_{r+s}, \hspace{.2in}  r+s \equiv 1~mod(4)$ and s is even.
	
	b)  $I = \{r+1, r+2, ... ,r+s \}$ and  $s \equiv 2~mod(4)$
	
	c)   $I  = \{1, 2, ... ,r \}$ and  $r \equiv 1~mod(4)$
\end{proposition}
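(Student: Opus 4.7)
The plan is to handle Proposition 4.2 after a short reduction via Proposition 4.1. For Proposition 4.1, the argument I would give is the standard linear-independence one: if $\xi = \sum_I c_I e_I \in \fZ(\fK_{r,s})$ and $e_J$ is any basis element of $\fK_{r,s}$, Propositions 3.1 and 3.2 produce signs $\epsilon_{I,J} \in \{\pm 1\}$ with $e_J e_I = \epsilon_{I,J}\, e_I e_J$, so
\[
0 = [\xi, e_J] = \sum_I c_I (1 - \epsilon_{I,J})\, e_I e_J.
\]
Since $e_I e_J = \pm e_{I \triangle J}$ and $I \mapsto I \triangle J$ is a bijection on multi-indices for fixed $J$, the nonzero terms lie on distinct basis lines, forcing $c_I = 0$ whenever $\epsilon_{I,J} = -1$. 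Letting $J$ vary shows every $e_I$ in the support of $\xi$ individually lies in $\fZ(\fK_{r,s})$, which reduces 4.2 to classifying the basis elements of $\fK_{r,s}$ that commute with all of them.

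For 4.2 itself I would fix $e_I \in \fK_{r,s}$, so by Lemmas 2.7 and 2.8 one has $|I| \equiv 1$ or $2 \pmod 4$ and $|I^+|$ even. Combining Propositions 3.1 and 3.2 gives the uniform commutation criterion: $e_I$ commutes with $e_J$ iff $|I||J|$ and $|I \cap J|$ have the same parity. I would then split by the parity of $|I|$ and probe with short test multi-indices.

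In Case A ($|I|$ odd), commutation with every $e_J \in \fK_{r,s}$ forces $|I \cap J| \equiv |J| \pmod 2$. Taking $J = \{j\}$ for $j \in \{1, \ldots, r\}$ (which lies in $\fK_{r,s}$ since $|J| = 1$, $|J^+| = 0$) yields $j \in I$, hence $\{1, \ldots, r\} \subseteq I$. Taking $J = \{j,k\} \subseteq \{r+1, \ldots, r+s\}$ (in $\fK_{r,s}$ since $|J| = 2$, $|J^+| = 2$) forces $|I \cap \{j,k\}|$ even for every such pair, so $I^+$ is either empty or all of $\{r+1, \ldots, r+s\}$. Imposing $|I| \equiv 1 \pmod 4$ and $|I^+|$ even then leaves exactly $I = \{1, \ldots, r\}$ with $r \equiv 1 \pmod 4$ (case (c)) and $I = \{1, \ldots, r+s\}$ with $r+s \equiv 1 \pmod 4$ and $s$ even (case (a)). In Case B ($|I|$ even) the criterion becomes $|I \cap J|$ even for every $e_J \in \fK_{r,s}$; the same two tests yield $I \cap \{1, \ldots, r\} = \emptyset$ and $I^+ \in \{\emptyset,\, \{r+1, \ldots, r+s\}\}$, and nontriviality together with $|I| \equiv 2 \pmod 4$ forces $I = \{r+1, \ldots, r+s\}$ with $s \equiv 2 \pmod 4$ (case (b)).

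To finish, I would verify the converse directly: for $I$ equal to $\{1, \ldots, r\}$, $\{r+1, \ldots, r+s\}$, or $\{1, \ldots, r+s\}$, the intersection $|I \cap J|$ simplifies to $|J^-|$, $|J^+|$, or $|J|$ respectively, and the correct parities in each commutation case fall out of $|J| \equiv 1, 2 \pmod 4$ together with $|J^+|$ even. The main obstacle is the small-parameter edge cases (principally $r = 0$ or $s \leq 1$) where the length-one or length-two test words are partly unavailable; in each such degenerate situation, however, the parity constraints $|I| \equiv 1, 2 \pmod 4$ and $|I^+|$ even already eliminate any would-be solutions outside the three families listed, so the classification remains as stated.
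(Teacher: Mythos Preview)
Your proposal is correct and follows essentially the same approach as the paper. Both arguments reduce to probing with the singletons $e_j$ for $j \in \{1,\ldots,r\}$ and the pairs $e_{jk}$ for $j,k \in \{r+1,\ldots,r+s\}$, which are exactly the short basis elements of $\fK_{r,s}$; the paper organizes this as Lemmas 4.4--4.6 (first showing that if $I$ meets either block it must contain it, then listing the three candidates) followed by separate converse Lemmas 4.7--4.9, whereas you split first on the parity of $|I|$ and use the unified criterion $e_I e_J = e_J e_I \Leftrightarrow |I\cap J| \equiv |I||J| \pmod 2$, which streamlines both the forward direction and the converse into a single computation. The content is the same; your packaging is a bit more compact.
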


\noindent $\mathit{Proof~of~Proposition~4.1}$  This is an immediate consequence of the following result :

\begin{lemma}  Let $\fI_{\fK} = \{I \in \fI : e_{I} \in \fK_{r,s} \}$.  Let $\xi \in \fZ(\fK_{r,s})$ be given and write $\xi = \sum_{I \in \fI_{\fK}} \xi_{I} e_{I}$, where $\xi_{I} \in F$ for all I.  If $\xi_{I} \neq 0$, then $e_{I} \in \fZ(\fK_{r,s})$.
\end{lemma}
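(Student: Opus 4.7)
My plan is to exploit the fact that $\fK_{r,s}$ has a basis consisting of the monomials $e_I$ with $I \in \fI_{\fK}$, together with the rigid multiplicative structure of these basis elements: for any two multi-indices $I,J$, one has $e_I e_J = \pm e_{I \triangle J}$, where $I \triangle J$ denotes the sorted symmetric difference. This follows from a short induction using the anticommutativity $e_i e_j = -e_j e_i$ for $i \neq j$. For fixed $J$, the map $I \mapsto I \triangle J$ is a bijection on the set of multi-indices, so the elements $\{e_J e_I : I \in \fI\}$ are, up to sign, all distinct elements of the basis $\fB$ of Proposition 1.3.

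With this in hand, I would fix an arbitrary $J \in \fI_{\fK}$ and compute
\[
0 = [e_J, \xi] = \sum_{I \in \fI_{\fK}} \xi_I\, [e_J, e_I].
\]
By Propositions 3.1 and 3.2, for each $I$ the bracket $[e_J, e_I]$ is either $0$ (when $e_J$ and $e_I$ commute) or $2 e_J e_I = \pm 2\, e_{J \triangle I}$ (when they anticommute). Partition $\fI_{\fK}$ into the set $\fI_J^{+}$ of indices for which $e_J$ and $e_I$ commute and the set $\fI_J^{-}$ of indices for which they anticommute. The equation above reduces to
\[
0 = 2 \sum_{I \in \fI_J^{-}} \xi_I\, (e_J e_I),
\]
and by the bijectivity remark the elements $\{e_J e_I : I \in \fI_J^{-}\}$ are $\pm 1$ multiples of pairwise distinct basis vectors in $\fB$. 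Hence each coefficient vanishes: $\xi_I = 0$ for every $I \in \fI_J^{-}$.

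The upshot is that for every $I \in \fI_{\fK}$ with $\xi_I \neq 0$, one has $I \notin \fI_J^{-}$ for any $J \in \fI_{\fK}$, i.e.\ $e_I$ commutes with every $e_J \in \fK_{r,s}$. Since $\fK_{r,s}$ is $F$-spanned by $\{e_J : J \in \fI_{\fK}\}$ (a fact recorded in the Cartan decomposition discussion of Section 2), this gives $[e_I, \fK_{r,s}] = 0$. Combined with $e_I \in \fK_{r,s}$ (which is the defining condition $I \in \fI_{\fK}$), we conclude $e_I \in \fZ(\fK_{r,s})$, as required.

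There is no real obstacle here: the whole argument is driven by the observation that commutators of basis monomials in a Clifford algebra are either zero or single basis monomials (up to scalar), so no cancellation can occur between the contributions of different $I$'s in the central expansion of $\xi$. The only point requiring any care is recording the bijection $I \mapsto I \triangle J$ and checking that the signs in $e_J e_I = \pm e_{J \triangle I}$ are irrelevant for the linear-independence argument.
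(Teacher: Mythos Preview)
Your proof is correct and follows essentially the same approach as the paper: both fix a basis monomial $e_J \in \fK_{r,s}$, expand $[e_J,\xi]$, and use that the products $e_J e_I$ are linearly independent to force the coefficient of each anticommuting $e_I$ to vanish. The only cosmetic difference is that the paper achieves the linear-independence step by multiplying the relation $0 = \sum_I \xi_I \lambda_{IJ}\, e_J e_I$ on the left by $e_J$ (using $e_J^2 \in F^{\times}$) to return to the original basis $\{e_I\}$, whereas you argue directly via the bijection $I \mapsto I \triangle J$; these are equivalent observations.
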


\begin{proof}  Recall that $\fK_{r,s} = F-span \{e_{I} : I \in \fI_{k} \}$.  Let $e_{K} \in \fK_{r,s}$.  Then $0 = [e_{K}, \xi ] = e_{K} \xi - \xi e_{K} = \sum_{I \in \fI_{\fK}} \xi_{I} (e_{K}e_{I} - e_{I} e_{K}) = \sum_{I \in \fI_{\fK}} \xi_{I} \lambda_{IK}~ e_{K} e_{I}$, where $\lambda_{IK} = 0$ if $[e_{K} , e_{I}] = 0$ and $\lambda_{IK} = 2$ otherwise.  Multiplying on the left by $e_{K}$ yields $0 = \sum_{I \in \fI_{\fK}} \eta_{IK} e_{I}$, where $\eta_{IK} = \xi_{I} \lambda_{IK} e_{K}^{2}$.   By the linear independence of $\{e_{I} : I \in \fI \}$ we conclude that $\eta_{IK} = 0$ for all $I \in \fI_{\fK}$.  Suppose now that $\xi_{I} \neq 0$ for some $I \in \fI_{\fK}$.  Then $\lambda_{IK} = 0$ since $e_{K}^{2} \neq 0$ by Proposition 1.3.  This means that $[e_{K},e_{I}] = 0$ and since $e_{K} \in \fK$ was arbitrary we conclude  that $e_{I} \in \fZ(\fK_{r,s})$.
\end{proof}

\noindent  $\mathit{Proof~of~ Proposition~4.2}$  We need some preliminary results

\begin{lemma}  Let $e_{I} \in \fZ(\fK_{r,s})$ for some $I \in \fI$.  Let $I \cap \{1,2,...,r \}$ be nonempty.  Then $ \{1,2,...,r \} \subset I$.
\end{lemma}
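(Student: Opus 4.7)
The plan is to argue by contradiction using a carefully chosen test element from $\fK_{r,s}$ and the commutation relations of Proposition 3.2. Suppose $e_I \in \fZ(\fK_{r,s})$ and some index $i \in I \cap \{1,\ldots,r\}$ exists, yet there is also some $j \in \{1,\ldots,r\}$ with $j \notin I$. I will produce a specific $e_K \in \fK_{r,s}$ that fails to commute with $e_I$, giving the contradiction.

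The test element I would use is $e_K$ with $K = \{i,j\}$ (ordered as a multi-index). First I need to check $e_K \in \fK_{r,s}$: since $|K|=2 \equiv 2 \pmod{4}$, we have $e_K \in \fG_{r,s}$ by Proposition 1.5; and since $i,j \in \{1,\ldots,r\}$, both $e_i^2$ and $e_j^2$ equal $-1$, so by Lemma 2.6 one computes $e_K^2 = (-1)^{1}(-1)(-1) = -1$, hence $e_K \in \fK_{r,s}$ by Proposition 2.5. Equivalently, $|K^{+}| = 2$ is even, so Lemma 2.8 gives $\beta(e_K) = e_K$.

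Next I would apply Proposition 3.2 to the pair $I,K$. Here $K \cap I = \{i\}$ has odd cardinality, and $|K| = 2$ is even. Part 2) of Proposition 3.2 says that when $|K \cap I|$ is odd, $e_I$ and $e_K$ commute iff both $|I|$ and $|K|$ are odd. Since $|K|=2$ is even, this fails, so $e_I e_K = -e_K e_I$; thus $[e_K,e_I] = 2\,e_K e_I$. Because $e_K$ and $e_I$ are invertible in $C\ell(r,s)$ (their squares are nonzero scalars by Proposition 1.3), the product $e_K e_I$ is nonzero, so $[e_K,e_I] \neq 0$. This contradicts $e_I \in \fZ(\fK_{r,s})$, completing the proof that every missing index in $\{1,\ldots,r\}$ leads to a contradiction, and hence $\{1,\ldots,r\} \subset I$.

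There is no real obstacle here: the only thing to be careful about is verifying that the chosen $e_K$ actually lies in $\fK_{r,s}$ (both the $\fG_{r,s}$-condition and the correct sign of $e_K^2$) and applying the correct part of Proposition 3.2 according to the parity of $|K \cap I|$. The argument is symmetric to the one used for the complementary block $\{r+1,\ldots,r+s\}$, which suggests an analogous lemma will be needed to finish the classification in Proposition 4.2.
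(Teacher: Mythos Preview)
Your argument is correct and in fact slightly slicker than the paper's. The paper proceeds in two steps: first it takes the single index $i\in I\cap\{1,\dots,r\}$, notes that $e_i\in\fK_{r,s}$, and uses part 2) of Proposition~3.2 (with $|I\cap\{i\}|=1$) to deduce that $|I|$ must be odd; then it takes a hypothetical $k\in\{1,\dots,r\}\setminus I$, again notes $e_k\in\fK_{r,s}$, and uses Proposition~3.1 (disjoint case, both lengths odd) to force anti-commutation, a contradiction. Your single test element $K=\{i,j\}$ bypasses the intermediate parity conclusion about $|I|$ and reaches the contradiction directly via part 2) of Proposition~3.2; amusingly, this mirrors exactly how the paper itself proves the companion Lemma~4.5 for the block $\{r+1,\dots,r+s\}$. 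One small slip: with the paper's conventions $e_i^2=-1$ for $1\le i\le r$, so $K^+=K\cap\{r+1,\dots,r+s\}=\emptyset$ and $|K^+|=0$, not $2$; either way it is even, so this does not affect your conclusion.
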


\begin{proof}  Let $i \in I \cap \{1,2,...,r \}$.  If $J = \{e_{i} \}$, then $|J| \equiv 1~(mod~4)$ and $J^{+}$ is empty.  Hence $e_{i} \in \fG_{r,s}$ by Proposition 1.5, and  $e_{i} \in \fK_{r,s}$ by Lemma 2.7 or 2.8 and the definition of $\fK_{r,s}$.  It follows that $e_{i} e_{I} = e_{I} e_{i}$ since $e_{I} \in \fZ(\fK_{r,s})$.  By part 2) of Proposition 3.2 we conclude that $|I|$ is odd since $|I \cap \{i \}| = |\{ i\}| = 1$.  Next suppose that $\{1,2,...,r \} - I$ is nonempty and let $k \in \{1,2,...,r \} - I$.  If $K = \{k \}$, then $I \cap K$ is empty.  As above, $e_{k} \in \fK$ and hence $e_{k} e_{I} = e_{I} e_{k}$.  Since $|I|$ and $|K|$ are odd and $|I \cap K|$ is even we obtain a contradiction to part 1) of Proposition 3.2.  Hence  $ \{1,2,...,r \} \subset I$
\end{proof}

\begin{lemma}  Let $e_{I} \in \fZ(\fK_{r,s})$ for some $I \in \fI$.  Let $I \cap \{r+1,r+ 2,...,r +s\}$ be nonempty.  Then $ \{r+1,r+2,...,r+s \} \subset I$.  
\end{lemma}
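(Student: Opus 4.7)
My plan is to mirror the strategy in the proof of Lemma 4.4, but with one essential adjustment. For $1 \le j \le s$ we have $e_{r+j}^{2} = +1$, so the singleton $\{r+j\}$ does not supply an element of $\fK_{r,s}$ (by Proposition 2.5, elements of $\fK_{r,s}$ must square to $-1$); in fact $e_{r+j} \in \fP_{r,s}$. Hence singletons from the ``negative'' index range cannot be used as test elements for membership in $\fZ(\fK_{r,s})$. The remedy I propose is to use \emph{pairs} drawn from $\{r+1,\ldots,r+s\}$ as probes inside $\fK_{r,s}$.

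First I would fix some $j \in I \cap \{r+1,\ldots,r+s\}$, which exists by hypothesis. If $s = 1$ the conclusion is immediate. Otherwise, for an arbitrary $k \in \{r+1,\ldots,r+s\}$ with $k \neq j$, I would form the $2$-element multi-index $J$ with entries $j$ and $k$. Then $|J| = 2 \equiv 2 \pmod 4$, so $e_{J} \in \fG_{r,s}$ by Proposition 1.5; and by Lemma 2.6 one computes $e_{J}^{2} = (-1)^{2(2-1)/2}\, e_{j}^{2} e_{k}^{2} = -1$, whence $e_{J} \in \fK_{r,s}$ by Proposition 2.5.

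The last step is to derive a contradiction from the assumption $k \notin I$. In that case $I \cap J = \{j\}$ has odd cardinality, so by Proposition 3.2(2), $e_{I}$ and $e_{J}$ commute if and only if both $|I|$ and $|J|$ are odd. But $|J| = 2$ is even, so $e_{I}$ and $e_{J}$ anti-commute, giving $[e_{J}, e_{I}] = -2\, e_{I} e_{J} \neq 0$ and contradicting $e_{I} \in \fZ(\fK_{r,s})$. Thus $k \in I$, and since $k$ was arbitrary, $\{r+1,\ldots,r+s\} \subseteq I$.

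The main obstacle, such as it is, lies in choosing the right probe: the two-step parity argument of Lemma 4.4 (first extract a parity condition on $|I|$ using a singleton in $I$, then contradict with a singleton outside $I$) is unavailable, since no singleton in $\{r+1,\ldots,r+s\}$ lies in $\fK_{r,s}$. Once the pair $\{j,k\}$ is selected, however, the fact that $|J| = 2$ is even causes Proposition 3.2(2) to bite immediately and force a contradiction regardless of the parity of $|I|$, so no preliminary parity extraction is required.
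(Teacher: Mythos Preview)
Your proof is correct and follows essentially the same route as the paper: pick $j\in I\cap\{r+1,\ldots,r+s\}$, pair it with a hypothetical $k\in\{r+1,\ldots,r+s\}\setminus I$ to form $J=\{j,k\}$, check that $e_{J}\in\fK_{r,s}$, and then use Proposition~3.2(2) with $|I\cap J|=1$ and $|J|=2$ even to force anti-commutation, a contradiction. The only cosmetic differences are that the paper verifies $e_{J}\in\fK_{r,s}$ via Lemma~2.8 ($|J^{+}|=2$ is even) rather than by computing $e_{J}^{2}=-1$, and that you handle the trivial case $s=1$ explicitly.
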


\begin{proof} By hypothesis  there exists $\alpha \in I^{+} = I \cap \{r+1,r+ 2,...,r +s\}$.  Suppose there exists $\beta \in \{r+1,r+ 2,...,r +s\} - I$ and let $J = \{\alpha, \beta \}$.  By Proposition 1.5 we see that $e_{J} \in \fG$ since $|J| \equiv 2~(mod ~4)$, and moreover $e_{J} \in \fK$ by Lemma 2.8 since $|J| = |J^{+}| = 2$.  Hence $e_{I} e_{J} = e_{J} e_{I}$.  On the other hand $|I \cap J| = |\{ \alpha\}| = 1$ and $|J|$ is even, which contradicts part 2) of Proposition 3.2.  We conclude that $ \{r+1,r+2,...,r+s \} \subset I$.  
\end{proof}

\begin{lemma}  Let $e_{I} \in \fZ(\fK_{r,s})$ for some $I \in \fI$.  Then one of the following holds :

	1) $I = \{1,2,..., r \}$ 
	
	2)  $I = \{r+1, r+2, ... , r+s \}$ 
	
	3)  $I = \{1,2,..., r, r+1, r+2, ... , r+s \}$.
\end{lemma}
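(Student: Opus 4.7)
The plan is that this lemma is essentially a packaging statement that combines Lemmas 4.4 and 4.5, so the proof should be short. First I would observe that since $I$ is a multi-index, $|I| \geq 1$, so $I$ is nonempty. Writing $I$ as the disjoint union $I = (I \cap \{1, \dots, r\}) \cup (I \cap \{r+1, \dots, r+s\})$, since $I \subset \{1,\dots,r+s\}$, at least one of these two intersections is nonempty.

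Next I would split into three cases based on which of the two intersections are nonempty. If only $I \cap \{1, \dots, r\}$ is nonempty, Lemma 4.4 gives $\{1, \dots, r\} \subset I$, and the emptiness of the second intersection forces $I = \{1, \dots, r\}$, yielding conclusion 1. The symmetric case where only $I \cap \{r+1, \dots, r+s\}$ is nonempty yields conclusion 2 via Lemma 4.5. When both intersections are nonempty, applying both lemmas simultaneously gives $\{1, \dots, r\} \cup \{r+1, \dots, r+s\} \subset I \subset \{1, \dots, r+s\}$, which forces equality and gives conclusion 3.

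There is no real obstacle here, since the content has already been extracted by Lemmas 4.4 and 4.5; the present lemma merely assembles the three possibilities. The only minor subtlety worth noting is the degenerate case $r = 0$, in which the first intersection is automatically empty and only conclusion 2 can occur; this is consistent with the statement since conclusion 1 would then describe the empty set, which is excluded because $I$ must be a nonempty multi-index. (Conclusion 3 in this case coincides with conclusion 2.)
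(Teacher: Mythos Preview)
Your proof is correct and follows essentially the same approach as the paper: both split into three cases according to which of the two intersections $I \cap \{1,\dots,r\}$ and $I \cap \{r+1,\dots,r+s\}$ are nonempty, and then invoke Lemmas 4.4 and 4.5 to force the corresponding conclusion. Your additional remark on the degenerate case $r=0$ is a small bonus that the paper does not make explicit.
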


\begin{proof}  We consider the various possibilities for $I^{+} = I \cap \{r+1,r+2, ... , r+s \}$ and $I^{-} = I \cap \{1,2,..., r \}$.  If $I^{+}$ and $I^{-}$ are both nonempty, then  $I = \{1,2,..., r, r+1, r+2, ... , r+s \}$ by Lemmas 4.4 and 4.5.  If $I^{+}$ is nonempty, and $I^{-}$ is empty, then $I = \{r+1, r+2, ... , r+s \}$ by Lemma 4.5.  If $I^{-}$ is nonempty, and $I^{+}$ is empty, then $I = \{1,2,..., r \}$ by Lemma 4.4.  
\end{proof}

\noindent We have shown that possibilities 1), 2) and 3) of Lemma 4.6 are the only ones that could arise if $e_{I}$ is to lie in the center of $\fK_{r,s}$.  We now complete the proof of Proposition 4.2 by showing under what conditions these three possibilities do arise. 
\newline 

\begin{lemma}  Let $I = \{1,2,..., r \}$.  Then $e_{I} \in \fZ(\fK_{r,s}) \Leftrightarrow r \equiv 1~(mod~4)$
\end{lemma}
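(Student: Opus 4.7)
The plan is to first determine when $e_I$ even belongs to $\fK_{r,s}$, then separate the ``if'' and ``only if'' directions by a short commutator calculation using the sign rules from Propositions 3.1 and 3.2.

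For $I = \{1,2,\ldots,r\}$ we have $|I| = r$ and $I^+ = I \cap \{r+1,\ldots,r+s\} = \emptyset$, so $|I^+| = 0$ is automatically even. By Proposition 1.5 the element $e_I$ lies in $\fG_{r,s}$ exactly when $r \equiv 1$ or $2 \pmod 4$; since $|I^+|$ is even, Lemma 2.8 then also gives $\beta(e_I) = e_I$, so in those cases $e_I \in \fK_{r,s}$. For $r \equiv 0$ or $3 \pmod 4$ the element $e_I$ is not even in $\fG_{r,s}$, so it cannot be in $\fZ(\fK_{r,s})$. This reduces the lemma to distinguishing $r \equiv 1$ from $r \equiv 2 \pmod 4$.

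Suppose $r \equiv 1 \pmod 4$, so $|I|$ is odd. I will show $e_I$ commutes with every $e_K \in \fK_{r,s}$. For such $K$ we have $|K| \equiv 1$ or $2 \pmod 4$ and $|K^+|$ is even, and $K \cap I = K \cap \{1,\ldots,r\}$ has the same parity as $|K|$ (since $|K^+|$ is even). Three cases: if $K \cap I = \emptyset$ then $K \subseteq \{r+1,\ldots,r+s\}$ so $|K| = |K^+|$ is even, and Proposition 3.1 gives $e_I e_K = (-1)^{r|K|} e_K e_I = e_K e_I$; if $|K \cap I|$ is even and nonzero then $|K|$ is even, so Proposition 3.2(1) rules out anticommutation; if $|K \cap I|$ is odd then $|K|$ is odd, and since $|I|$ is odd too, Proposition 3.2(2) gives commutation. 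Thus $e_I \in \fZ(\fK_{r,s})$.

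Suppose finally $r \equiv 2 \pmod 4$, so $|I| = r$ is even. The element $e_1$ satisfies $|K| = 1 \equiv 1 \pmod 4$ and $|K^+| = 0$, hence $e_1 \in \fK_{r,s}$. But $|I \cap \{1\}| = 1$ is odd and $|I|$ is even, so Proposition 3.2(2) forces $e_I e_1 = -e_1 e_I$, giving $[e_1, e_I] \neq 0$ and $e_I \notin \fZ(\fK_{r,s})$. No step here looks especially obstructive; the only thing to be careful about is keeping the parities of $|K|$, $|K^+|$ and $|K \cap I|$ correctly coordinated, so that the right branch of Proposition 3.2 is applied in each subcase.
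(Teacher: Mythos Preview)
Your proof is correct and follows essentially the same approach as the paper's. The only organizational difference is in the $r\equiv 1\pmod 4$ direction: the paper factors $e_J=\pm e_{J^{+}}e_{J^{-}}$ and checks that $e_I$ commutes with each factor separately, whereas you work directly with $e_K$ via a case split on the parity of $|K\cap I|$, using the observation that $|K\cap I|$ and $|K|$ have the same parity because $|K^{+}|$ is even. For the $r\equiv 2\pmod 4$ direction you choose the specific witness $e_1$, while the paper phrases it as any $J\subset\{1,\dots,r\}$ with $|J|\equiv 1\pmod 4$; these are the same idea.
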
 

\begin{proof}  Suppose first that $e_{I} \in \fZ(\fK_{r,s})$.  Then $r = |I| \equiv 1$ or $2~(mod~4)$ by 2) of Proposition 1.5  since $e_{I} \in \fG_{r,s}$.  We suppose that $r \equiv 2~(mod~4)$ and obtain a contradiction.  In particular $|I| = r$ is even.  Let J be any subset of $\{1,2,...,r \}$ with $|J| \equiv 1~(mod~4)$.  Then $e_{J} \in \fG_{r,s}$ by Proposition 1.5, and furthermore $e_{J} \in \fK_{r,s}$ by  Lemma 2.8 since $|J^{+}| = 0$.  Hence $e_{I} e_{J} = e_{J} e_{I}$.  On the other hand $|I|$ is even and $|I \cap J| = |J|$ is odd, so $e_{I}$ anti-commutes with $e_{J}$ by part 2) of Proposition 3.2.  We conclude that $r \equiv 1~(mod~4)$.
\newline

\noindent  Next we show that $e_{I} \in \fZ(\fK_{r,s})$ if $r \equiv 1~(mod~4)$.  In this case $r = |I|$ is odd.  Let $J \in \fI$ with $e_{J} \in \fK_{r,s}$ be given.  It suffices to show that $e_{I}$ commutes with both $e_{J^{+}}$ and $e_{J^{-}}$ since $e_{J} = \pm e_{J^{+}} e_{J^{-}}$.  The fact that $e_{J} \in \fK_{r,s}$ implies that $|J^{+}|$ is even by Lemma 2.8.  Since $I$ and $J^{+}$ are disjoint it follows from Proposition 3.1 that $e_{I} e_{J^{+}} = (-1)^{|I| |J^{+}|} e_{J^{+}} e_{I} = e_{J^{+}} e_{I}$ since $|J^{+}|$ is even.
\newline

\noindent  Next we show that $e_{I}$ commutes with $e_{J^{-}}$.  Note that $I \cap J^{-} = J^{-}$.  If $|J^{-}|$ is even, then since $|I|$ is odd it follows from part 1) of Proposition 3.2 that $e_{I} e_{J^{-}} = e_{J^{-}} e_{I}$.  If $|J^{-}|$ is odd, then $e_{I} e_{J^{-}} = e_{J^{-}} e_{I}$ by part 2) of Proposition 3.2.  This completes the proof of the lemma.
\end{proof}

\begin{lemma}  Let $I = \{r+1,r+2,..., r+s \}$.  Then $e_{I} \in \fZ(\fK_{r,s}) \Leftrightarrow s \equiv 2~(mod~4)$
\end{lemma}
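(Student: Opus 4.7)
The plan is to mirror the structure of the proof of Lemma 4.7, but with the roles of the ``positive'' block $\{1,\ldots,r\}$ and the ``negative'' block $\{r+1,\ldots,r+s\}$ exchanged. For $I = \{r+1,\ldots,r+s\}$ one has $I^{+} = I$, so $|I^{+}| = s$; combining this with Lemma 2.8 (which says $e_{I}\in\fK_{r,s}$ iff $|I^{+}|$ is even) and Proposition 1.5 (which says $e_{I}\in\fG_{r,s}$ iff $|I|\equiv 1$ or $2 \pmod{4}$), we see that the mere membership $e_{I}\in\fK_{r,s}$ forces $s\equiv 2 \pmod{4}$. Since $e_{I}\in\fZ(\fK_{r,s})$ implies $e_{I}\in\fK_{r,s}$, the forward direction $(\Rightarrow)$ is handled at once, without any auxiliary case analysis.

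For the converse $(\Leftarrow)$, suppose $s\equiv 2 \pmod{4}$, so $|I|=s$ is even and $e_{I}\in\fK_{r,s}$. I must show that $e_{I}$ commutes with every $e_{J}\in\fK_{r,s}$, since by Section 2 such $e_{J}$'s span $\fK_{r,s}$ over $F$. Write $J^{-}=J\cap\{1,\ldots,r\}$ and $J^{+}=J\cap\{r+1,\ldots,r+s\}$; the elements of $J^{-}$ have strictly smaller indices than those of $J^{+}$, so $e_{J}=e_{J^{-}}e_{J^{+}}$ with no sign, and it therefore suffices to show that $e_{I}$ commutes with each of $e_{J^{-}}$ and $e_{J^{+}}$.

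The commutation $e_{I}e_{J^{-}}=e_{J^{-}}e_{I}$ is immediate from Proposition 3.1, since $I$ and $J^{-}$ are disjoint and $|I|$ is even. For the commutation with $e_{J^{+}}$, note that $J^{+}\subseteq I$ so $|I\cap J^{+}|=|J^{+}|$, which is even because Lemma 2.8 applied to $e_{J}\in\fK_{r,s}$ forces $|J^{+}|$ even; part 1) of Proposition 3.2 then gives anti-commutation only when both $|I|$ and $|J^{+}|$ are odd, and here both are even (the case $J^{+}=\emptyset$ is trivial). I anticipate no real obstacle: the argument is in fact shorter than that of Lemma 4.7 because membership of $e_{I}$ in $\fK_{r,s}$ here directly encodes the parity hypothesis $s\equiv 2 \pmod{4}$, so no extra congruence class has to be eliminated on the ``only if'' side, and the ``if'' side reduces cleanly to two applications of the commutation lemmas from Section 3.
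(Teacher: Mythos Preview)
Your proof is correct and follows essentially the same approach as the paper: the forward direction combines Lemma~2.8 and Proposition~1.5 to force $s\equiv 2\pmod 4$ from mere membership of $e_{I}$ in $\fK_{r,s}$, and the converse splits $e_{J}$ into its $J^{-}$ and $J^{+}$ parts and applies Propositions~3.1 and~3.2 respectively. Your version is in fact slightly tidier in two places: you observe that $e_{J}=e_{J^{-}}e_{J^{+}}$ with no sign (the paper writes $e_{J}=\pm e_{J^{+}}e_{J^{-}}$), and you explicitly note the trivial case $J^{+}=\emptyset$, which the paper's invocation of Proposition~3.2 technically does not cover.
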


\begin{proof}  Suppose first that $e_{I} \in \fZ(\fK_{r,s}) \subset \fK_{r,s}$.  Then $|I| = |I^{+}|$ is even by Lemma 2.8.  Since $e_{I} \in \fG_{r,s}$ it follows from 2) of Proposition 1.5 that $|I| \equiv 1$ or $2~(mod~4)$.  We conclude that $s = |I| \equiv 2~(mod~4)$. 
\newline

\noindent  Conversely, suppose that $s \equiv 2~(mod~4)$.  Then $e_{I} \in \fG_{r,s}$ by 2) of Proposition 1.5, and moreover $e_{I} \in \fK_{r,s}$ by Lemma 2.8 since $s = |I| = |I^{+}|$ is even.  Now let $e_{J} \in \fK_{r,s}$.  Then $|I \cap J^{+}| = |J^{+}|$ is even by Lemma 2.8.  It follows that $e_{I}$ commutes with $e_{J^{+}}$ by part 1) of Proposition 3.2.  Since $I \cap \{1,2,..., r \}$ is empty it follows from Proposition 3.1 that $e_{I} e_{J^{-}} = (-1)^{|I| |J^{-}|} e_{J^{-}} e_{I} = e_{J^{-}} e_{I}$ since $|I|$ is even.  Hence $e_{I}$ commutes with $e_{J} = \pm e_{J^{+}} e_{J^{-}}$.  We conclude that $e_{I} \in \fZ(\fK_{r,s})$ since $e_{J} \in \fK_{r,s}$ was arbitrary.
\end{proof}

\begin{lemma}  Let $I = \{1,2,...,r, r+1, r+2, ..., r+s \}$.  Then $e_{I} \in \fZ(\fK_{r,s}) \Leftrightarrow a)~ r+s \equiv 1~(mod~4)$ and b)~ s is even.
\end{lemma}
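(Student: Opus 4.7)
The plan is to prove both directions by combining the membership characterizations of $\fG_{r,s}$ and $\fK_{r,s}$ from Proposition 1.5 and Lemma 2.8 with the commutation criteria of Proposition 3.2 and the centrality statement for $\omega$ in Proposition 1.6(a). Note that by hypothesis (coming from the case of Lemma 4.6 that leads to this $I$) both $I^+$ and $I^-$ are nonempty, so $r \geq 1$ and $s \geq 1$. Also $I = \{1,\dots,r+s\}$, so $e_I = \omega$, $|I| = r+s$, $|I^+| = s$, and $|I^-| = r$.

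For the forward direction, assume $e_I \in \fZ(\fK_{r,s})$. Membership $e_I \in \fK_{r,s}$ forces $|I^+| = s$ to be even via Lemma 2.8, which proves (b). Membership $e_I \in \fG_{r,s}$ forces $|I| = r+s \equiv 1$ or $2 \pmod 4$ via Proposition 1.5, so it remains to rule out $r+s \equiv 2 \pmod 4$. Suppose for contradiction that $r+s \equiv 2 \pmod 4$; then $|I|$ is even. Pick $J = \{1\}$, which is available since $r \geq 1$. Then $|J| = 1 \equiv 1 \pmod 4$ and $|J^+| = 0$ is even, so $e_J \in \fG_{r,s} \cap \fK_{r,s}$ by Proposition 1.5 and Lemma 2.8. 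Since $I \cap J = \{1\}$ has odd cardinality and $|J|$ is odd but $|I|$ is even, part 2) of Proposition 3.2 forces $e_I e_J \neq e_J e_I$, contradicting the assumption that $e_I$ centralizes $\fK_{r,s}$. Hence $r+s \equiv 1 \pmod 4$.

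For the reverse direction, assume $r+s \equiv 1 \pmod 4$ and $s$ is even. Then $|I| = r+s \equiv 1 \pmod 4$ gives $e_I \in \fG_{r,s}$ by Proposition 1.5, and $|I^+| = s$ even gives $e_I \in \fK_{r,s}$ by Lemma 2.8. Moreover $r+s$ is odd, so $\omega = e_I$ lies in the center of $C\ell(r,s)$ by Proposition 1.6(a). In particular $e_I$ commutes with every element of $\fK_{r,s}$, so $e_I \in \fZ(\fK_{r,s})$.

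I do not expect any serious obstacle; the only point requiring a small amount of care is the forward direction, where one must produce an explicit $\fK_{r,s}$-element that fails to commute with $\omega$ when $r+s \equiv 2 \pmod 4$, and verify that such an element exists using $r \geq 1$. Alternatively, if one prefers not to invoke Proposition 1.6(a) in the reverse direction, one can reprove centrality of $\omega$ from scratch by splitting an arbitrary $e_J \in \fK_{r,s}$ according to the parity of $|J|$: when $|J| \equiv 2 \pmod 4$, part 1) of Proposition 3.2 applies with $I \cap J = J$ of even cardinality and $|I|$ odd; when $|J| \equiv 1 \pmod 4$, part 2) applies with $|I|$ and $|J|$ both odd. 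Both cases yield $e_I e_J = e_J e_I$, confirming $e_I \in \fZ(\fK_{r,s})$.
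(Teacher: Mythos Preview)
Your proof is correct and follows essentially the same approach as the paper's. Both directions match: for the reverse direction you invoke Proposition~1.6(a) exactly as the paper does, and for the forward direction you produce the same witness $e_1 \in \fK_{r,s}$ (available since $r \geq 1$) to derive a contradiction when $r+s \equiv 2 \pmod 4$. The only difference is that the paper obtains the anticommutation $e_1 \omega = -\omega e_1$ by citing Lemma~5.1 of [E] (via $e_1 \omega = \omega\,\alpha(e_1)$), whereas you obtain it from Proposition~3.2(2); this is a cosmetic change and, if anything, keeps the argument self-contained within the present paper.
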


\begin{proof}  Suppose first that $e_{I} = \omega \in \fZ(\fK_{r,s})$.  Then $\omega \in \fG_{r,s}$ and it follows from Proposition 1.5 that $r+s \equiv 1$ or $2~(mod~4)$.  Recall that $e_{1} \in \fK_{r,s}$ by Lemma 2.7 since $e_{1}^{2} = -1$.  If $r+s$ is even, then $e_{1} \omega = \omega \alpha(e_{1}) = - \omega e_{1}$ by Lemma 5.1 of [E], a contradiction.  Hence $r+s \equiv 1~(mod~4)$.  Moreover, since $\omega \in \fK_{r,s}$ it follows from Lemma 2.8 that $s = |\omega^{+}|$ is even.
\newline

\noindent  Conversely, suppose that $r+s \equiv 1~(mod~4)$ and s is even.  By Proposition 1.6 it follows that $\omega$ lies in the center of $C\ell(r,s)$.  Furthermore $\omega \in \fG_{r,s}$ by Proposition 1.5 since $r+s \equiv 1~(mod~4)$.  Finally, $\omega \in \fK_{r,s}$ by Lemma 2.8 since $|\omega^{+}| = s$ is even.  We conclude that $\omega \in \fZ(\fK_{r,s})$.
\end{proof}

\section{Equivalence of bilinear forms and field extensions}

\noindent Let V be a finite dimensional vector space over a field F of characteristic $\neq 2$, and let $Q_{i} : V \times V \rightarrow F$ be symmetric bilinear forms for $i = 1,2$.  The bilinear forms $Q_{1}, Q_{2}$ are said to be $\mathit{equivalent}$ if there exists a nonsingular linear transformation $T : V \rightarrow V$ such that $Q_{2}(v,w) = Q_{1}(T(v), T(w))$ for all $v,w \in V$.  It is easy to see that being equivalent is an equivalence relation on the vector space of bilinear forms on V.

\begin{proposition}  Let V be a finite dimensional vector space over F, and let $Q_{1}, Q_{2}$ be equivalent, nondegenerate, symmetric, bilinear forms on V.  Then $C\ell(V,Q_{1})$ is algebra isomorphic to $C\ell(V,Q_{2})$.
\end{proposition}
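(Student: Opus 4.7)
The strategy is to exploit the universal property in Proposition 1.1 to build algebra homomorphisms in both directions and then to use the uniqueness clause to show they are inverses of each other.

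Let $T:V\to V$ be a nonsingular linear map with $Q_2(v,w)=Q_1(T(v),T(w))$ for all $v,w\in V$, and let $i_1:V\to C\ell(V,Q_1)$ and $i_2:V\to C\ell(V,Q_2)$ be the canonical injections. First I would define $\sigma:V\to C\ell(V,Q_1)$ by $\sigma(v)=i_1(T(v))$. This is $F$-linear, and for every $v\in V$ one computes $\sigma(v)\cdot\sigma(v)=i_1(T(v))\cdot i_1(T(v))=-Q_1(T(v),T(v))\,1=-Q_2(v,v)\,1$. By Proposition 1.1 applied to $\fA=C\ell(V,Q_1)$ and the map $\sigma$, there is then a unique algebra homomorphism $j:C\ell(V,Q_2)\to C\ell(V,Q_1)$ with $j\circ i_2=i_1\circ T$.

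Next I would run the symmetric construction. Define $\sigma':V\to C\ell(V,Q_2)$ by $\sigma'(v)=i_2(T^{-1}(v))$. Since $Q_2(T^{-1}(v),T^{-1}(v))=Q_1(v,v)$ by the defining property of $T$, the same verification gives $\sigma'(v)\cdot\sigma'(v)=-Q_1(v,v)\,1$. Proposition 1.1 applied to $\fA=C\ell(V,Q_2)$ and $\sigma'$ then yields a unique algebra homomorphism $j':C\ell(V,Q_1)\to C\ell(V,Q_2)$ with $j'\circ i_1=i_2\circ T^{-1}$.

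It remains to check that $j$ and $j'$ are mutually inverse. Composing the identities above gives $(j\circ j')\circ i_1=j\circ i_2\circ T^{-1}=i_1\circ T\circ T^{-1}=i_1$, so $j\circ j'$ is an algebra endomorphism of $C\ell(V,Q_1)$ that extends the $F$-linear map $i_1:V\to C\ell(V,Q_1)$. The identity map $\mathrm{Id}_{C\ell(V,Q_1)}$ is another such extension, so the uniqueness clause of Proposition 1.1 (applied to $\fA=C\ell(V,Q_1)$ and $\sigma=i_1$) forces $j\circ j'=\mathrm{Id}$. The same argument with the roles of $Q_1$ and $Q_2$ swapped gives $j'\circ j=\mathrm{Id}$, and we conclude that $j$ is an algebra isomorphism.

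I do not anticipate a serious obstacle: the argument is formal manipulation of the universal property, and every step uses only the stated hypothesis that $T$ intertwines $Q_1$ and $Q_2$. The one point to be careful about is to invoke the uniqueness clause of Proposition 1.1 exactly once (to compare $j\circ j'$ with the identity), rather than trying to verify inverseness directly on all of $C\ell(V,Q_1)$; apart from that the proof is essentially a diagram chase.
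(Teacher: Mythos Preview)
Your argument is correct. Both you and the paper begin identically, defining the map $V\to C\ell(V,Q_1)$, $v\mapsto T(v)$, checking the Clifford relation, and invoking Proposition~1.1 to produce an algebra homomorphism $j:C\ell(V,Q_2)\to C\ell(V,Q_1)$. The difference lies only in how the map is shown to be an isomorphism. The paper observes that $j(V)=T(V)=V$ generates $C\ell(V,Q_1)$ as an algebra, so $j$ is surjective, and then appeals to equality of dimensions. You instead repeat the construction with $T^{-1}$ to obtain $j'$ in the other direction and use the uniqueness clause of Proposition~1.1 to force $j\circ j'=\mathrm{Id}$ and $j'\circ j=\mathrm{Id}$. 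Your route is the more purely categorical one: it never touches dimension or the fact that $V$ generates the Clifford algebra, and would work verbatim in any setting where the universal property holds. The paper's route is slightly shorter, trading the second invocation of the universal property for a one-line dimension count.
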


\begin{proof}  By the equivalence of $Q_{1}$ and $Q_{2}$ there exists  a nonsingular linear transformation $T : V \rightarrow V$ such that $Q_{2}(v,w) = Q_{1}(T(v), T(w))$ for all $v,w \in V$.  Define $i : V \rightarrow C\ell(V, Q_{1})$ by $i(v) = T(v)$.  Clearly i is linear and injective.  Moreover, if $v \in V$, then $- Q_{2}(v,v) = - Q_{1}(T(v),T(v))= T(v)^{2} = i(v)^{2}$.  Hence by the universal mapping definition of $C\ell(V,Q_{1})$ (Proposition 1.1) the injective linear map i extends to an algebra homomorphism $\tilde{i} : C\ell(V, Q_{2}) \rightarrow C\ell(V, Q_{1})$.  The map $\tilde{i}$ is surjective since $\tilde{i}(V) = i(V) = V$ generates $C\ell(V,Q_{1})$ as an algebra (see for example Corollary 1.2 of [E] for a proof).  We conclude that the map $\tilde{i}$ is an isomorphism since the dimensions of $C\ell(V, Q_{1})$ and $C\ell(V, Q_{2})$ are the same.
\end{proof}

\noindent $\mathbf{Field~extensions}$

\noindent  Let V be a finite dimensional vector space over a field F of characteristic $\neq 2$, and let $\overline{F}$ be a field that is a finite extension of F.  Let Q be a nondegenerate, symmetric, bilinear form on V.  If $\overline{V} = V \otimes_{F} \overline{F}$, then we shall extend Q to a nondegenerate, symmetric, bilinear form $\overline{Q}$  on $\overline{V}$.
\newline

\noindent  Let $\fB = \{v_{1}, ... , v_{n} \}$ be a Q-orthogonal basis of V.  Let $\overline{Q} : \overline{V} \times \overline{V} \rightarrow \overline{F}$ be the unique symmetric, bilinear form such that $\overline{Q}(v_{i} \otimes 1, v_{j} \otimes 1) = Q(v_{i}, v_{j})$ for $1 \leq i,j \leq n$.  The bilinearity of Q and $\overline{Q}$ implies that $\overline{Q}(v \otimes 1, w \otimes 1) = Q(v,w)$ for all $v,w \in V$.  In particular,  $\overline{Q}$ does not depend on the basis $\fB$ of V.  Hence $\{v_{1} \otimes 1, ... , v_{n} \otimes 1 \}$ is a $\overline{Q}$-orthogonal  basis of $\overline{V}$.  Moreover, since Q is nondegenerate, we have  $\overline{Q}(v_{i} \otimes 1, v_{i} \otimes 1) = Q(v_{i}, v_{i}) \neq 0$ for $1 \leq i \leq n$.  It follows that $\overline{Q}$ is nondegenerate on $\overline{V}$.
\newline

\noindent  Next, we show that $C\ell(V,Q) \otimes \overline{F}$ is an algebra over $\overline{F}$.  Let $\fB = \{v_{1}, ... , v_{n} \}$ be a basis of V.  Then $\overline{\fB} = \{v_{1} \otimes 1, ... , v_{n} \otimes 1 \}$ is a basis for $\overline{V} = V \otimes \overline{F}$.  We define $(v_{i} \otimes 1) \cdot (v_{j} \otimes 1) = (v_{i} \cdot v_{j}) \otimes 1$ for $1 \leq i,j \leq n$ and extend this to a multiplication on $C\ell(V,Q) \otimes \overline{F}$ by bilinearity.  In particular if $v,w \in V$, then $(v \otimes 1) \cdot (w \otimes 1) = (v \cdot w) \otimes 1$, so the multiplication on $C\ell(V,Q) \otimes \overline{F}$ does not depend on the choice of basis $\fB$ of V.
 
\begin{proposition}  The algebra  $C\ell(V,Q) \otimes \overline{F}$ is algebra isomorphic to $C\ell(\overline{V}, \overline{Q})$.
\end{proposition}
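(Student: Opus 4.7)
The plan is to apply the universal mapping property of the Clifford algebra $C\ell(\overline{V},\overline{Q})$ stated in Proposition 1.1, and to construct an $\overline{F}$-algebra homomorphism from $C\ell(\overline{V},\overline{Q})$ to $C\ell(V,Q)\otimes\overline{F}$ that will turn out to be an isomorphism by a dimension count. Concretely, first I would define an $\overline{F}$-linear map $\sigma:\overline{V}\to C\ell(V,Q)\otimes\overline{F}$ by $\sigma(v\otimes\lambda)=i(v)\otimes\lambda$, where $i:V\to C\ell(V,Q)$ is the canonical embedding. This is clearly well-defined and $\overline{F}$-linear.

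Next I would verify the defining relation $\sigma(u)\cdot\sigma(u)=-\overline{Q}(u,u)\cdot 1$ for every $u\in\overline{V}$. Using the $Q$-orthogonal basis $\fB=\{v_1,\ldots,v_n\}$ of $V$, the elements $\{v_i\otimes 1\}$ form an $\overline{F}$-basis of $\overline{V}$, and in $C\ell(V,Q)$ the vectors $i(v_i),i(v_j)$ anticommute for $i\neq j$ (since $i(v_i+v_j)^2=-Q(v_i+v_j,v_i+v_j)$ combined with $i(v_i)^2=-Q(v_i,v_i)$ and $Q(v_i,v_j)=0$ forces $i(v_i)i(v_j)+i(v_j)i(v_i)=0$). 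Consequently for $u=\sum a_i(v_i\otimes 1)$ with $a_i\in\overline{F}$, the cross terms in $\sigma(u)^2$ cancel and one is left with
\[
\sigma(u)^2=\sum_i a_i^2\bigl(i(v_i)^2\otimes 1\bigr)=-\sum_i a_i^2\,Q(v_i,v_i)\cdot 1=-\overline{Q}(u,u)\cdot 1,
\]
as required. By Proposition 1.1 applied to the $\overline{F}$-algebra $C\ell(V,Q)\otimes\overline{F}$ and the $\overline{F}$-linear map $\sigma$, there exists a unique $\overline{F}$-algebra homomorphism $j:C\ell(\overline{V},\overline{Q})\to C\ell(V,Q)\otimes\overline{F}$ such that $j\circ\bar\imath=\sigma$, where $\bar\imath:\overline{V}\to C\ell(\overline{V},\overline{Q})$ is the canonical embedding.

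To finish I would show that $j$ is surjective and compare dimensions. Surjectivity: the image of $j$ contains $\sigma(\overline{V})=i(V)\otimes\overline{F}$, and since $i(V)$ generates $C\ell(V,Q)$ as an $F$-algebra, $i(V)\otimes\overline{F}$ generates $C\ell(V,Q)\otimes\overline{F}$ as an $\overline{F}$-algebra. On the dimension side, by Proposition 1.3 the $F$-dimension of $C\ell(V,Q)$ is $2^n$, hence $\dim_{\overline{F}}(C\ell(V,Q)\otimes\overline{F})=2^n$; likewise $\dim_{\overline{F}}\overline{V}=n$ gives $\dim_{\overline{F}}C\ell(\overline{V},\overline{Q})=2^n$. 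A surjective $\overline{F}$-linear map between $\overline{F}$-vector spaces of the same finite dimension is an isomorphism, so $j$ is the desired algebra isomorphism.

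The only step that requires genuine care is the squaring identity for $\sigma$; once the $Q$-orthogonal basis is used to reduce to a diagonal computation, everything follows formally. The remaining obstacle would be ensuring that the multiplication on $C\ell(V,Q)\otimes\overline{F}$ that has just been introduced really makes $\sigma$ a map into an associative $\overline{F}$-algebra in the sense required by Proposition 1.1 — but this is built into the construction preceding the proposition and needs only to be noted.
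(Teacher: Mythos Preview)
Your proof is correct and follows essentially the same route as the paper: define the natural $\overline{F}$-linear map $\overline{V}\to C\ell(V,Q)\otimes\overline{F}$, verify the Clifford relation $\sigma(u)^2=-\overline{Q}(u,u)$ via a $Q$-orthogonal basis, invoke the universal property (Proposition~1.1) to obtain an algebra homomorphism, and conclude by surjectivity plus the dimension count $2^n=2^n$. The paper's argument is the same, with your $\sigma$ being what the paper calls the inclusion map $i$.
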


\begin{proof} Let $i : \overline{V} \rightarrow (C\ell(V,Q) \otimes \overline{F} , \overline{Q})$ be the inclusion map.  We use Proposition 1.1 to show that i extends to an algebra homomorphism $\overline{i} : C\ell(\overline{V}, \overline{Q}) \rightarrow (C\ell(V,Q) \otimes \overline{F}, \overline{Q})$.  We then show that $\overline{i}$ is surjective and use a dimension argument to conclude that $\overline{i}$ is an algebra isomorphism.
\newline

\noindent Let $\{v_{1}, ... , v_{n} \}$ be a Q-orthogonal basis of V, and let $\{v_{1} \otimes 1, ... , v_{n} \otimes 1 \}$ be the corresponding $\overline{Q}$-orthogonal  basis of $\overline{V}$.  Let $\xi \in \overline{V}$ be given and write $\xi = \sum_{i=1}^{n} \alpha_{i} (v_{i} \otimes 1) = \sum_{i=1}^{n} (v_{i} \otimes \alpha_{i})$, where $\alpha_{i} \in \overline{F}$ for all i.  Then $\xi^{2} = \sum_{i,j = 1}^{n} (v_{i} \otimes \alpha_{j}) \cdot (v_{j} \otimes \alpha_{j}) = \newline \sum_{i < j} (v_{i} \cdot v_{j} + v_{j} \cdot v_{i}) \otimes \alpha_{i} \alpha_{j} + \sum_{i=1}^{n} v_{i}^{2} \otimes \alpha_{i}^{2} = \sum_{i=1}^{n} v_{i}^{2} \otimes \alpha_{i}^{2}$ by the Q-orthogonality of $\{v_{1}, ... , v_{n} \}$.  Similarly, $ - \overline{Q} (\xi , \xi) = - \sum_{i,j = 1}^{n} \alpha_{i} \alpha_{j} ~ \overline{Q}(v_{i} \otimes 1, v_{j} \otimes 1) = - \sum_{i=1}^{n} \alpha_{i}^{2}~\overline{Q}(v_{i} \otimes 1, v_{i} \otimes 1) = \sum_{i=1}^{n} \alpha_{i}^{2} (v_{i} \otimes 1) \cdot (v_{i} \otimes 1) = \sum_{i=1}^{n} (v_{i} \otimes \alpha_{i}) \cdot (v_{i} \otimes \alpha_{i}) = \sum_{i=1}^{n} v_{i}^{2} \otimes \alpha_{i}^{2} = \xi^{2} = i(\xi) \cdot i(\xi)$.   It now follows from Proposition 1.1 that $i : \overline{V} \rightarrow C\ell(V,Q) \otimes \overline{F} , \overline{Q})$ extends to an algebra homomorphism $\overline{i} : C\ell(\overline{V}, \overline{Q}) \rightarrow (C\ell(V,Q) \otimes \overline{F}, \overline{Q})$.
\newline

\noindent The homomorphism $\overline{i}$ is surjective since $\overline{i}(\overline{V}) = i(\overline{V})$ generates $C\ell(V,Q) \otimes \overline{F}$ as an algebra.  We conclude that $\overline{i}$ is an isomorphism since $C\ell(V,Q) \otimes \overline{F}$ and $C\ell(\overline{V}, \overline{Q})$ both have dimension $2^{n}$ over $\overline{F}$.  
\end{proof}

\begin{corollary}  Let V be a finite dimensional vector space over $\br$.  Let $Q_{1}$ and $Q_{2}$ be nondegenerate, symmetric bilinear forms on V.  Let $\overline{Q_{1}}$ and $\overline{Q_{2}}$ be their extensions to nondegenerate, symmetric, bilinear forms on $\overline{V} = V \otimes \bc$.  Then $\overline{Q_{1}}$ and $\overline{Q_{2}}$ are equivalent.
\end{corollary}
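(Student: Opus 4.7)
The plan is to reduce the statement to the classical fact that, over an algebraically closed field of characteristic $\neq 2$, every nondegenerate symmetric bilinear form in dimension $n$ is equivalent to the standard form $\sum x_{i} y_{i}$; the equivalence between any two such forms then follows by transitivity (which is noted just before Proposition 5.1). The only feature of $\bc$ that I will use beyond what is already used for $\br$ is that every nonzero element of $\bc$ has a square root.

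First I would apply the orthogonal basis statement from the Preliminaries (the paragraph immediately preceding Proposition 1.1) to $\overline{Q_1}$ on $\overline{V}$: there exists a basis $\{w_{1}, \ldots, w_{n}\}$ of $\overline{V}$ with $\overline{Q_1}(w_{i}, w_{j}) = 0$ for $i \neq j$ and $\overline{Q_1}(w_{i}, w_{i}) = \lambda_{i} \in \bc^{*}$. Since $\bc$ is algebraically closed each $\lambda_{i}$ admits a square root $\mu_{i} \in \bc^{*}$. Setting $u_{i} = \mu_{i}^{-1} w_{i}$ gives a new basis with $\overline{Q_1}(u_{i}, u_{j}) = \mu_{i}^{-1} \mu_{j}^{-1} \overline{Q_1}(w_{i}, w_{j}) = \delta_{ij}$. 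So $\overline{Q_1}$ is equivalent, in the sense of the definition just before Proposition 5.1, to the form whose matrix in a chosen basis is the identity.

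Applying the same procedure to $\overline{Q_2}$ produces a basis $\{u_{1}^{\prime}, \ldots, u_{n}^{\prime}\}$ of $\overline{V}$ with $\overline{Q_2}(u_{i}^{\prime}, u_{j}^{\prime}) = \delta_{ij}$. Now I define the $\bc$-linear map $T : \overline{V} \rightarrow \overline{V}$ by $T(u_{i}^{\prime}) = u_{i}$ and extend by linearity. Since $T$ carries a basis to a basis it is nonsingular, and by bilinearity it suffices to check the equivalence identity on basis vectors, where $\overline{Q_1}(T(u_{i}^{\prime}), T(u_{j}^{\prime})) = \overline{Q_1}(u_{i}, u_{j}) = \delta_{ij} = \overline{Q_2}(u_{i}^{\prime}, u_{j}^{\prime})$. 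Hence $\overline{Q_1}$ and $\overline{Q_2}$ are equivalent.

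There is no real obstacle here; the whole argument is the standard diagonalization-and-rescaling trick, and its essential ingredient is the existence of square roots in $\bc$, which fails over $\br$ and is precisely the reason the analogous statement over $\br$ distinguishes the signatures $(r,s)$. If it becomes desirable later, the same proof works verbatim replacing $\bc$ by any algebraically closed (or more generally, quadratically closed) extension $\overline{F}$ of a field $F$ of characteristic $\neq 2$, which is why the corollary will combine cleanly with Proposition 5.2 to show that $C\ell(V, Q_{1}) \otimes \bc$ and $C\ell(V, Q_{2}) \otimes \bc$ are isomorphic complex algebras.
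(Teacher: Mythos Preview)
Your proof is correct and follows essentially the same route as the paper: both arguments diagonalize $\overline{Q}$ using the orthogonal-basis fact from the Preliminaries and then rescale by square roots in $\bc$ to obtain an orthonormal basis, reducing equivalence to transitivity through the standard form. The only cosmetic difference is that the paper finds the rescaling factor by factoring $x^{2} - 1/\overline{Q}(w_{i},w_{i})$ rather than directly taking $\mu_{i} = \sqrt{\lambda_{i}}$, and leaves the construction of the intertwining map $T$ implicit; your version spells both of these out.
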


\begin{proof}  It suffices to show that any nondegenerate, symmetric, bilinear form $\overline{Q}$ on $\overline{V}$ has a basis $\fB = \{ \overline{v_{1}}, ... , \overline{v_{n}} \}$ such that $\overline{Q}(\overline{v_{i}}, \overline{v_{j}}) = \delta_{ij}$.  Let $\{w_{1}, ... , w_{n} \}$ be a $\overline{Q}$-orthogonal basis of $\overline{V}$.  Note that $\overline{Q}(w_{i}, w_{i}) \neq 0$ for all i by the nondegeneracy of $\overline{Q}$.  Since $\bc$ is algebraically closed we may write $x^{2} - \frac{1}{\overline{Q}(w_{i}, w_{i})} = (x - \alpha_{i})(x - \beta_{i})$ for suitable $\alpha_{i}, \beta_{i} \in \bc$ and $ 1 \leq i \leq n$.  If $\overline{v_{i}} = \alpha_{i} w_{i}$, then $\overline{Q}(\overline{v_{i}}, \overline{v_{i}}) = 1$ for all i, and we conclude that $\overline{Q}(\overline{v_{i}}, \overline{v_{j}}) = \delta_{ij}$ for all i,j.
\end{proof}

\begin{corollary} Let r,s be nonnegative integers with $r+s > 0$.  Then

	1)  $C\ell(r,s) \otimes \bc \approx C\ell(r+s,0) \otimes \bc$.

	2)  $\fG_{r,s} \otimes \bc \approx \fG_{r+s,0} \otimes \bc$.
\end{corollary}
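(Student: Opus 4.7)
The plan is to chain together the three results just established: Proposition 5.1 (equivalent forms give isomorphic Clifford algebras), Proposition 5.2 ($C\ell(V,Q) \otimes \overline{F} \approx C\ell(\overline{V},\overline{Q})$), and Corollary 5.3 (over $\br$, any two nondegenerate symmetric bilinear forms become equivalent after complexification).

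For part 1), I would set $V = \br^{r+s}$ and let $Q_{r,s}$ and $Q_{r+s,0}$ denote the two nondegenerate symmetric bilinear forms of signatures $(r,s)$ and $(r+s,0)$. Their complexifications $\overline{Q_{r,s}}$ and $\overline{Q_{r+s,0}}$ on $\overline{V} = V \otimes \bc$ are nondegenerate symmetric bilinear forms on a $\bc$-vector space, so Corollary 5.3 gives $\overline{Q_{r,s}} \sim \overline{Q_{r+s,0}}$ as bilinear forms on $\overline{V}$. Proposition 5.1 then yields a $\bc$-algebra isomorphism $\Psi : C\ell(\overline{V},\overline{Q_{r,s}}) \to C\ell(\overline{V},\overline{Q_{r+s,0}})$. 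Composing with the two isomorphisms $C\ell(r,s) \otimes \bc \approx C\ell(\overline{V},\overline{Q_{r,s}})$ and $C\ell(\overline{V},\overline{Q_{r+s,0}}) \approx C\ell(r+s,0) \otimes \bc$ provided by Proposition 5.2 produces the desired isomorphism $\Phi : C\ell(r,s) \otimes \bc \to C\ell(r+s,0) \otimes \bc$.

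For part 2), I would show that the isomorphism $\Phi$ constructed above restricts to an isomorphism on the Lie subalgebras $\fG_{r,s} \otimes \bc \to \fG_{r+s,0} \otimes \bc$. By Proposition 1.4, on each Clifford algebra there is a unique algebra anti-automorphism $c$ satisfying $c \equiv -\mathrm{Id}$ on the generating subspace, and $\fG_{r,s}$ is by definition its $-1$-eigenspace. The key observation is that both $\Psi$ (via Proposition 5.1) and $\overline{i}$ (via Proposition 5.2) are built from $\bc$-linear maps between the generating subspaces $\overline{V}$, and hence $\Phi$ maps the generating subspace of $C\ell(r,s) \otimes \bc$ onto the generating subspace of $C\ell(r+s,0) \otimes \bc$. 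Since conjugating the anti-automorphism $c_{r+s,0}$ of $C\ell(r+s,0) \otimes \bc$ by $\Phi$ produces an anti-automorphism of $C\ell(r,s) \otimes \bc$ that agrees with $-\mathrm{Id}$ on the generating subspace, the uniqueness part of Proposition 1.4 forces this conjugate to equal $c_{r,s}$. Therefore $\Phi$ intertwines the two anti-automorphisms $c$ and consequently maps $-1$-eigenspaces to $-1$-eigenspaces.

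The only delicate step is verifying that the Clifford anti-automorphism $c$ extends to $C\ell(r,s) \otimes \bc$ and is characterized on the complexified generating subspace by the same uniqueness statement as in the real setting; this follows by extending $c$ $\bc$-linearly and applying Proposition 1.1 to $\overline{V}$. Once this is in hand, the fact that $\Phi$ sends $\overline{V}$ to $\overline{V}$ as $\bc$-linear spaces closes the argument, and part 2) follows.
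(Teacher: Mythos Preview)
Your proposal is correct and follows essentially the same approach as the paper: part 1) is obtained by chaining Propositions 5.1, 5.2, and Corollary 5.3, and part 2) is deduced by showing that the algebra isomorphism intertwines the canonical anti-automorphisms $c$ (via their uniqueness on the generating subspace), hence carries the $-1$-eigenspace $\fG_{r,s}\otimes\bc$ onto $\fG_{r+s,0}\otimes\bc$. The paper's write-up is terser---it simply notes that $c_{r,s}\otimes\bc$ and $c_{r+s,0}\otimes\bc$ are both the canonical anti-automorphism of the common complexified algebra---but the underlying argument is the same as yours.
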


\begin{proof}  Assertion 1) follows immediately from Proposition 5.2 and Corollary 5.3.  We prove 2).  If $c_{r,s}$ denotes the canonical anti-automorphism of $C\ell(r,s)$, then $c_{r,s} \otimes \bc$ is the canonical anti-automorphism of $C\ell(r,s) \otimes \bc$ since $(c_{r,s} \otimes \bc)(e_{i} \otimes 1) = - e_{i} \otimes 1$ for $1 \leq i \leq r+s$.  Similarly, if $c_{r+s,0}$ is the canonical anti-automorphism of $C\ell(r+s,0)$, then $c_{r+s,0} \otimes \bc$ is the canonical anti-automorphism of $C\ell(r+s,0) \otimes \bc$.  Identifying $C\ell(r,s) \otimes \bc$ with $C\ell(r+s,0) \otimes \bc$ yields

	$(^{*}) c_{r,s} \otimes \bc = c_{r+s,0} \otimes \bc$.
	
\noindent  Now observe that $\fG_{r,s}$ is the $-1$-eigenspace of $c_{r,s}$ in $C\ell(r,s)$.  Hence $\fG_{r,s} \otimes \bc$ is the $-1$-eigenspace of $c_{r,s} \otimes \bc$ in $C\ell(r,s) \otimes \bc = C\ell(r+s,0) \otimes \bc$.  Similarly, $\fG_{r+s,0} \otimes \bc$ is the $-1$-eigenspace of $c_{r+s,0} \otimes \bc$ in $C\ell(r+s,0) \otimes \bc$.  Assertion 2) of the Proposition now follows from $(^{*})$.
\end{proof}

\begin{corollary}  Let r,s be nonnegative integers with $r+s > 0$.  Let $\fH_{r,s}$ denote the semisimple ideal of $\fG_{r,s}$ as defined in Proposition 1.7 and Proposition 6.12 of [E].  Then $\fH_{r,s} \otimes \bc$ is given by the following table

	$\mathbf{r+s \hspace{2.2in} \fH_{r,s} \otimes \bc}$
\newline

	8k \hspace{2.2in} $\fs \fo(2^{4k}, \br) \otimes \bc$
	
	8k+1 \hspace{2in} $\fs \fu(2^{4k}) \otimes \bc$
	
	8k+2 \hspace{2in} $\fs \fp(2^{4k}) \otimes \bc$
	
	8k+3 \hspace{2in} $(\fs \fp(2^{4k}) \times \fs \fp(2^{4k})) \otimes \bc$
	
	8k+4 \hspace{2in} $\fs \fp(2^{4k+1}) \otimes \bc$
	
	8k+5 \hspace{2in} $\fs \fu(2^{4k+2}) \otimes \bc$
	
	8k+6 \hspace{2in} $\fs \fo(2^{4k+3}, \br) \otimes \bc$
	
	8k+7 \hspace{2in} $(\fs \fo(2^{4k+3}, \br) \times \fs \fo(2^{4k+3}, \br)) \otimes \bc$
\end{corollary}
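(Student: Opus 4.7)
The plan is to reduce the statement to the case $s=0$ treated in \textbf{[E]} and then complexify. By Corollary 5.4 we already have an algebra isomorphism $\fG_{r,s}\otimes\bc \approx \fG_{r+s,0}\otimes\bc$. The first step is to upgrade this to the same isomorphism on the semisimple ideal level, namely
\[
\fH_{r,s}\otimes\bc \;\approx\; \fH_{r+s,0}\otimes\bc.
\]
For this I would argue that if $\fG = \fZ(\fG)\oplus\fH$ is the canonical decomposition of Proposition 1.7, then the complexified decomposition $\fG\otimes\bc = \fZ(\fG)\otimes\bc \oplus \fH\otimes\bc$ is still the unique splitting of $\fG\otimes\bc$ into its center and a Killing-nondegenerate complement. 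Indeed, the center of $\fG\otimes\bc$ equals $\fZ(\fG)\otimes\bc$ (centrality is preserved and reflected by faithful flat base change), and the Killing form of $\fH\otimes\bc$ is the complexification of the nondegenerate Killing form of $\fH$, hence still nondegenerate. So the decomposition is intrinsic and therefore matches under the isomorphism $\fG_{r,s}\otimes\bc\approx\fG_{r+s,0}\otimes\bc$. This reduces the computation to identifying $\fH_{n,0}\otimes\bc$ for each $n = r+s$.

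Next I would invoke the main theorem of \textbf{[E]}, which identifies $\fH_{n,0}$ as a real compact Lie algebra. Explicitly, in \textbf{[E]} it is shown (using the same eight-fold pattern coming from the classification of real Clifford algebras) that $\fH_{n,0}$ is, up to isomorphism over $\br$, a compact real form of one of the following types depending on $n \pmod 8$: $\fs\fo(N,\br)$ for $n\equiv 0,6,7 \pmod 8$, $\fs\fu(N)$ for $n\equiv 1,5\pmod 8$, and $\fs\fp(N)$ (compact symplectic) for $n\equiv 2,3,4\pmod 8$, with the appropriate value of $N$ (a power of $2$) and with a product structure in the cases $n\equiv 3$ and $n\equiv 7 \pmod 8$. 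A dimension count using Proposition 2.9 and Proposition 1.6 (to subtract the center when $n\equiv 1\pmod 4$) pins down $N$ in each case and confirms the powers of two appearing in the statement.

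Finally I would complexify. The real compact forms $\fs\fo(N,\br)$, $\fs\fu(N)$, $\fs\fp(N)$ each have as complexification the corresponding complex simple Lie algebra of type $D$ or $B$, $A$, $C$ respectively, and a product of two real compact forms complexifies to the product of the two complexifications. Writing these complexifications in the same notation $\fs\fo(N,\br)\otimes\bc$, $\fs\fu(N)\otimes\bc$, $\fs\fp(N)\otimes\bc$ as in the table of the corollary then gives exactly the list in the statement, reading off the eight congruence classes $r+s\equiv 0,1,\ldots,7 \pmod 8$.

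The main obstacle I anticipate is the first step: verifying that the intrinsic decomposition $\fG = \fZ(\fG)\oplus\fH$ of Proposition 1.7 is preserved under the isomorphism of Corollary 5.4, or equivalently that the complement $\fH$ is uniquely determined and thus respected by any algebra isomorphism. This uses the fact that $\fH$ is characterized as the unique ideal complementary to the center on which the Killing form is nondegenerate, together with the fact that nondegeneracy of the Killing form is stable under complexification. Once that step is in hand, everything else is bookkeeping against the table in \textbf{[E]}.
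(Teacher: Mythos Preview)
Your proposal is correct and follows essentially the same route as the paper: invoke Corollary 5.4(2) to reduce to the case $s=0$, then quote the main result of [E] (Theorem 9.5 there), and finally pass from $\fG$ to the semisimple ideal $\fH$ by stripping off the center when $r+s\equiv 1\pmod 4$. The paper handles this last step more tersely, simply noting that $\fu(n)\approx\fs\fu(n)\times\br$ and that $\fH_{r,s}$ has codimension one in $\fG_{r,s}$ exactly when $r+s\equiv 1\pmod 4$; your argument via the intrinsic nature of the center--plus--Killing-nondegenerate-complement decomposition is a valid and slightly more detailed way of making the same point.
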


\begin{proof}  This follows immediately from Theorem 9.5 of [E] and 2) of Corollary 5.4.  We also note that a) $\fu(n) \approx \fs \fu(n) \times \br$ in the cases where $r+s \equiv 1~(mod~4)$ and  b) $\fH_{r,s}$ has codimension 1 in $\fG_{r,s}$ if $r+s \equiv 1~(mod~4)$ and $\fH_{r,s} = \fG_{r,s}$ otherwise (Proposition 6.12) of [E]).
\end{proof}

\section{A method for computing $\fH_{r,s}$}

\noindent  In what follows we assume that $F = \br$. By Proposition 1.7 we may write  $\fG_{r,s} = \fZ(\fG) \oplus \fH_{r,s}$, where $\fH_{r,s}$ is a semisimple ideal of $\fG_{r,s}$.  If $r+s \equiv 1~(mod~4)$, then $\fZ(\fG) = \br~\omega$ and $\fH_{r,s}$ is a codimension 1 ideal of $\fG_{r,s}$.  If $r+s \neq 1~(mod~4)$, then $\fG_{r,s} = \fH_{r,s}$. 
\newline

\noindent  In this section we develop a method for computing $\fH_{r,s}$, and we give a brief outline here.

	 1) We define $\fK^{\prime}_{r,s} = \fK_{r,s} \cap \fH_{r,s}$.  We show that $\fH_{r,s} = \fK_{r,s}^{\prime} \oplus \fP_{r,s}$.
	 	 
	 2) We consider each of the  cases $r+s = 8k + \alpha, 0 \leq \alpha \leq 7$ and $ s \equiv \beta~(mod~4), 0 \leq \beta \leq 3$.  Then we proceed as follows :
	 
\hspace{.2in}	a)  We compute the dimension of $\fK^{\prime}_{r,s}$, using Propositions 2.13 and 6.1. 
		
\hspace{.2in}	b)  We compute the dimension of the center of $\fK^{\prime}_{r,s}$ using Propositions 4.1 and 4.2 and Corollary 6.3 when $r+s \equiv 1~(mod~4)$.  
		
\hspace{.2in}	c)  If $r+s$ is not congruent to 3 (mod 4), then $\fH_{r,s}$ is a real form of $\fU^{\bc}$, where $\fU$ is a compact simple Lie algebra , and $\fK^{\prime}_{r,s}$  is Lie algebra isomorphic to the fixed point set of some involutive automorphism $\tau$ of $\fU$.
\newline 

\noindent The number of conjugacy  classes in $Aut(\fU)$ of involutive automorphisms  $\tau$ of $\fU$ is either two or three, depending on $\fU$, and a complete list of these conjugacy classes is known.   See for example [He, pp. 451-455] for a description.
			 
\hspace{.2in}	d)  If $r+1 \equiv 3,~mod~4$, then $\fH_{r,s}$ is a real form of  $\fU^{\bc} \oplus \fU^{\bc}$, where $\fU$ is a compact simple Lie algebra.  The  method needed here is a more complicated version of  c).
\newline 

\noindent For each case $r+s = 8k + \alpha, 0 \leq \alpha \leq 7$ and $s \equiv \beta~(mod~4), 0 \leq \beta \leq 3$ the Lie algebra $\fH_{r,s}$ can now be determined using the steps above.  In some cases step 2b) may be omitted.
\newline

\noindent $\mathbf{Definition}$  Let $\fK^{\prime}_{r,s} = \fK_{r,s} \cap \fH_{r,s}$. If $r+s \neq 1~(mod~4)$, then clearly $\fK^{\prime}_{r,s} = \fK_{r,s}$ since $\fH_{r,s} = \fG_{r,s}$.
\begin{proposition}  Let $r+s \equiv 1~(mod~4)$.  Then

	1)  If $\omega \in \fK_{r,s}$, then $\fK_{r,s} = \br \omega \oplus \fK^{\prime}_{r,s}$.
	
	2)  If $\omega \notin \fK_{r,s}$, then $\fK_{r,s} = \fK^{\prime}_{r,s}$.
\end{proposition}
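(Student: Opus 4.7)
The plan is to exploit the decomposition $\fG_{r,s} = \br\omega \oplus \fH_{r,s}$ (from Proposition 1.7, using that $\fZ(\fG_{r,s}) = \br\omega$ when $r+s \equiv 1~(mod~4)$, by Proposition 1.6) together with the fact that the automorphism $\beta$ preserves $\fH_{r,s}$. The first thing to notice is how $\beta$ acts on $\omega$: since $\beta(e_i) = e_i$ for $1 \leq i \leq r$ and $\beta(e_{r+j}) = - e_{r+j}$ for $1 \leq j \leq s$, we get $\beta(\omega) = (-1)^{s} \omega$. Thus $\omega \in \fK_{r,s}$ exactly when $s$ is even, and $\omega \in \fP_{r,s}$ exactly when $s$ is odd, distinguishing the two cases of the Proposition.

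The key structural input is that $\beta(\fH_{r,s}) = \fH_{r,s}$. Since $\fH_{r,s}$ is semisimple and $\fZ(\fG_{r,s})$ is abelian, we have $[\fG_{r,s}, \fG_{r,s}] = [\fH_{r,s}, \fH_{r,s}] = \fH_{r,s}$. Because $\beta$ is a Lie algebra automorphism (being an algebra automorphism of $C\ell(r,s)$), it commutes with the bracket, so $\beta(\fH_{r,s}) = \beta([\fG_{r,s}, \fG_{r,s}]) = [\fG_{r,s}, \fG_{r,s}] = \fH_{r,s}$. I expect this is the main subtlety of the proof; everything else is routine bookkeeping relative to the direct sum $\fG_{r,s} = \br\omega \oplus \fH_{r,s}$.

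Granting this, part 1) is straightforward. Suppose $\omega \in \fK_{r,s}$ and let $x \in \fK_{r,s}$. Write $x = a\omega + h$ with $a \in \br$ and $h \in \fH_{r,s}$. Then $h = x - a\omega$ lies in $\fK_{r,s}$, since both $x$ and $\omega$ do; hence $h \in \fK_{r,s} \cap \fH_{r,s} = \fK^{\prime}_{r,s}$. The sum $\br\omega + \fK^{\prime}_{r,s}$ is direct because $\br\omega \cap \fH_{r,s} = \{0\}$, giving $\fK_{r,s} = \br\omega \oplus \fK^{\prime}_{r,s}$.

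For part 2), suppose $\omega \notin \fK_{r,s}$, so $s$ is odd and $\beta(\omega) = -\omega$. Let $x \in \fK_{r,s}$ and write $x = a\omega + h$ with $a \in \br$ and $h \in \fH_{r,s}$. Applying $\beta$ and using that $\beta(h) \in \fH_{r,s}$, we get $x = \beta(x) = -a\omega + \beta(h)$. Comparing the two decompositions of $x$ relative to the direct sum $\br\omega \oplus \fH_{r,s}$ forces $a = -a$, hence $a = 0$ and $x = h \in \fH_{r,s}$. Therefore $\fK_{r,s} \subseteq \fH_{r,s}$, which gives $\fK_{r,s} = \fK_{r,s} \cap \fH_{r,s} = \fK^{\prime}_{r,s}$, completing the proof.
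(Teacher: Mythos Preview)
Your proof is correct and follows the same overall architecture as the paper's: compute $\beta(\omega) = (-1)^{s}\omega$, establish that $\beta$ leaves $\fH_{r,s}$ invariant, and then decompose an arbitrary element of $\fK_{r,s}$ along $\fG_{r,s} = \br\omega \oplus \fH_{r,s}$.

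The one genuine difference is in how you justify the $\beta$-invariance of $\fH_{r,s}$. The paper (Lemma 6.2) argues concretely from the explicit description $\fH_{r,s} = \mathrm{span}\{e_{I} : I \neq \{1,\dots,r+s\},\ e_{I} \in \fG_{r,s}\}$ together with $\beta(e_{I}) = \pm e_{I}$. You instead characterize $\fH_{r,s}$ intrinsically as the derived subalgebra $[\fG_{r,s},\fG_{r,s}]$, using that $\fH_{r,s}$ is semisimple and $\fZ(\fG_{r,s})$ is central, and then observe that any Lie algebra automorphism preserves the derived subalgebra. Your argument is basis-free and more conceptual; the paper's is more hands-on but requires the explicit construction of $\fH_{r,s}$ from [E]. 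A further minor streamlining: in part 1) you avoid applying $\beta$ altogether by simply noting that $h = x - a\omega$ is a difference of two elements of $\fK_{r,s}$, whereas the paper applies $\beta$ to deduce $\beta(\xi') = \xi'$.
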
 

\noindent $\mathbf{Remark}$ Since $s = |\omega^{+}|$ it follows from Lemma 2.8  that $\omega \in \fK_{r,s} \Leftrightarrow $ s is even.
\newline

\noindent We need a preliminary result.

\begin{lemma}  The automorphism $\beta$ of $\fG_{r,s}$ leaves invariant $\fH_{r,s}$.  
\end{lemma}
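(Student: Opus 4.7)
The plan is to identify $\fH_{r,s}$ intrinsically in a way that is manifestly preserved by every Lie algebra automorphism of $\fG_{r,s}$, and then to invoke the fact (already established in the excerpt) that $\beta$ restricts to a Lie algebra automorphism of $\fG_{r,s}$.

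First I would dispose of the trivial case. By Proposition 1.6(b) and the setup at the start of Section 6, if $r+s \not\equiv 1~(mod~4)$ then $\fZ(\fG_{r,s}) = \{0\}$ and $\fH_{r,s} = \fG_{r,s}$, so the invariance is automatic. Hence I may assume $r+s \equiv 1~(mod~4)$, in which case $\fZ(\fG_{r,s}) = \br\omega$ is one-dimensional and $\fH_{r,s}$ is a codimension-one semisimple ideal with $\fG_{r,s} = \br\omega \oplus \fH_{r,s}$.

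The key step is to show that $\fH_{r,s}$ coincides with the derived algebra $[\fG_{r,s},\fG_{r,s}]$. On one hand, since $\fH_{r,s}$ is semisimple, $[\fH_{r,s},\fH_{r,s}] = \fH_{r,s}$. On the other hand, $\fZ(\fG_{r,s})$ contributes nothing to brackets, so using the decomposition $\fG_{r,s} = \fZ(\fG_{r,s}) \oplus \fH_{r,s}$ and expanding bilinearly,
\begin{equation*}
[\fG_{r,s},\fG_{r,s}] = [\fZ(\fG_{r,s}) \oplus \fH_{r,s},\, \fZ(\fG_{r,s}) \oplus \fH_{r,s}] = [\fH_{r,s},\fH_{r,s}] = \fH_{r,s}.
\end{equation*}
Thus $\fH_{r,s} = [\fG_{r,s},\fG_{r,s}]$, which is an intrinsic characterization of $\fH_{r,s}$ that depends only on the Lie algebra structure of $\fG_{r,s}$.

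Finally, Corollary 2.3 established that $\beta$ leaves $\fG_{r,s}$ invariant, and since $\beta$ is an algebra automorphism of $C\ell(r,s)$, it is in particular a Lie algebra automorphism of $\fG_{r,s}$. Any Lie algebra automorphism carries the derived algebra to itself, so
\begin{equation*}
\beta(\fH_{r,s}) = \beta([\fG_{r,s},\fG_{r,s}]) = [\beta(\fG_{r,s}),\beta(\fG_{r,s})] = [\fG_{r,s},\fG_{r,s}] = \fH_{r,s},
\end{equation*}
as required. I do not expect any serious obstacle here; the only point requiring a moment's care is verifying that the derived algebra equals $\fH_{r,s}$ rather than a proper sub-ideal, but this is immediate from the semisimplicity of $\fH_{r,s}$ guaranteed by Proposition 1.7.
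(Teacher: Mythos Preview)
Your proof is correct, but it takes a genuinely different route from the paper's. The paper proceeds by an explicit basis argument: it notes that $\fH_{r,s}$ is the $F$-span of $\{e_{I} : e_{I} \in \fG_{r,s},~I \neq \{1,2,\dots,r+s\}\}$ (from Propositions 1.5, 1.6, 1.7), and since $\beta(e_{I}) = \pm e_{I}$ for every multi-index $I$, invariance is immediate. Your argument instead characterizes $\fH_{r,s}$ intrinsically as the derived algebra $[\fG_{r,s},\fG_{r,s}]$, which is automatically preserved by any Lie algebra automorphism. The paper's approach is more concrete and avoids appealing to semisimplicity or Cartan's criterion, while yours is more conceptual and shows at once that $\fH_{r,s}$ is invariant under \emph{every} automorphism of $\fG_{r,s}$, not just $\beta$; it also does not rely on the explicit span description of $\fH_{r,s}$ from [E].
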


\begin{proof}  By Propositions 1.5, 1.6 and 1.7  $\fH_{r,s} = $span-$\{e_{I} : I \neq \{1,2,...,r+s \}~ \rm{and}~e_{I} \in \fG_{r,s} \}$.  The assertion now follows immediately since $\beta(e_{I}) = \pm e_{I}$ for all $I \in \fI$.
\end{proof}

\noindent  We now prove 1).  Let $\omega \in \fK_{r,s}$.  Then $s = |\omega^{+}|$ must be even by Lemma 2.8.  It follows that $\beta(\omega) = \omega$ since $\beta(\omega) = (-1)^{s} \omega$ by the definitions of $\beta$ and $\omega$.  Clearly we have 

	$(^{*})$ \hspace{.2in} $\br \omega \oplus \fK^{\prime}_{r,s} \subseteq \fK_{r,s}$.
	
\noindent  It remains to prove that the inclusion in $(^{*})$ is an equality.  Let $\xi \in \fK_{r,s}$ be given.  We may choose elements $\alpha \in \br$ and $\xi^{\prime} \in \fH_{r,s}$ such that $\xi = \alpha \omega + \xi^{\prime}$ since $\fG_{r,s} = \br \omega \oplus \fH_{r,s}$ when $r + s \equiv 1~(mod~4)$ by Propositions 1.6 and 1.7.  Recall that $\fK_{r,s}$ is the +1 eigenspace of $\beta|_{\fG_{r,s}}$.  Hence $\alpha \omega + \xi^{\prime} = \xi = \beta(\xi) = \alpha \beta(\omega) + \beta(\xi^{\prime}) = \alpha \omega + \beta(\xi^{\prime})$.  We conclude that $\beta(\xi^{\prime}) = \xi^{\prime}$ and hence $\xi^{\prime} \in \fK_{r,s} \cap \fH_{r,s} = \fK^{\prime}_{r,s}$.  This shows that equality holds above in $(^{*})$.
\newline

\noindent  We now prove 2).  Note that $\omega \in \fG_{r,s}$ by 2) of Proposition 1.5  since $r+s \equiv 1~(mod~4)$.  Suppose now that $\omega \notin \fK_{r,s}$.  Then $s = |\omega^{+}|$ is odd by Lemma 2.8, and it follows that $- \omega = (-1)^{s} (\omega) = \beta(\omega)$.  Clearly it suffices to show that $\fK_{r,s} \subseteq \fK^{\prime}_{r,s}$.  Let $\xi \in \fK_{r,s}$ be given and write $\xi = \alpha \omega + \xi^{\prime}$ for suitable elements $\alpha \in \br$ and $\xi^{\prime} \in \fH_{r,s}$.  Then $\alpha \omega + \xi^{\prime} = \xi = \beta(\xi) = \alpha \beta(\omega) + \beta(\xi^{\prime}) = - \alpha \omega + \beta(\xi^{\prime})$.  Hence $2 \alpha \omega = \beta(\xi^{\prime}) - \xi^{\prime}$, which lies in $\br \omega \cap \fH_{r,s} = \{0 \}$.  It follows that $\alpha = 0$ and $\xi = \xi^{\prime} \in \fH_{r,s}$.  We conclude that $\xi \in \fK_{r,s} \cap \fH_{r,s} = \fK^{\prime}_{r,s}$.  The proof of 2) is complete.

\begin{corollary}  Let $r+s \equiv 1~(mod~4)$.  Then

	1)  If $\omega \in \fK_{r,s}$, then $dim ~\fZ(\fK_{r,s}) = 1 + dim~ \fZ(\fK_{r,s}^{\prime})$.
	
	2)  If $\omega \notin \fK_{r,s}$, then $\fZ(\fK_{r,s}) = \fZ(\fK_{r,s}^{\prime})$.
\end{corollary}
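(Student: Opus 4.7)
The plan is to derive Corollary 6.3 directly from Proposition 6.1 by examining how the decomposition $\fK_{r,s} = \br\omega \oplus \fK_{r,s}^{\prime}$ (in case 1) interacts with the center operation, using the crucial fact that $\omega$ is central in $C\ell(r,s)$.

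Part 2) is immediate: by Proposition 6.1.2, if $\omega\notin\fK_{r,s}$ then $\fK_{r,s}=\fK_{r,s}^{\prime}$ as Lie algebras, so the centers coincide. No further work is needed.

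For part 1), assume $\omega\in\fK_{r,s}$. The plan is to prove the stronger statement
\[
\fZ(\fK_{r,s}) \;=\; \br\,\omega \,\oplus\, \fZ(\fK_{r,s}^{\prime}),
\]
from which the dimension identity follows by taking dimensions (the sum is direct because $\br\omega\cap\fK_{r,s}^{\prime}=\{0\}$ by Proposition 6.1.1). First I would verify $\br\omega\subseteq \fZ(\fK_{r,s})$: since $r+s\equiv 1\pmod 4$ is odd, $\omega$ lies in the center of the whole algebra $C\ell(r,s)$ by Proposition 1.6a, so a fortiori $\omega$ commutes with every element of $\fK_{r,s}$. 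Next I would show $\fZ(\fK_{r,s}^{\prime})\subseteq \fZ(\fK_{r,s})$: given $k'\in\fZ(\fK_{r,s}^{\prime})$, the element $k'$ commutes with $\omega$ (again by centrality of $\omega$) and by hypothesis with every element of $\fK_{r,s}^{\prime}$, and these together span $\fK_{r,s}$. This gives the inclusion $\br\omega\oplus\fZ(\fK_{r,s}^{\prime})\subseteq \fZ(\fK_{r,s})$.

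For the reverse inclusion, take any $\xi\in\fZ(\fK_{r,s})$ and, using Proposition 6.1.1, write $\xi = a\omega+k'$ with $a\in\br$ and $k'\in\fK_{r,s}^{\prime}$. For every $y\in\fK_{r,s}^{\prime}\subseteq\fK_{r,s}$ we have
\[
0 \;=\; [\xi,y] \;=\; a[\omega,y] + [k',y] \;=\; [k',y],
\]
since $[\omega,y]=0$. Hence $k'\in\fZ(\fK_{r,s}^{\prime})$, and $\xi = a\omega + k'\in \br\omega\oplus \fZ(\fK_{r,s}^{\prime})$. This establishes the displayed equality, and taking $\br$-dimensions yields $\dim\fZ(\fK_{r,s})=1+\dim\fZ(\fK_{r,s}^{\prime})$.

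There is no real obstacle here; the argument is essentially formal once Proposition 6.1 is in hand. The only points requiring care are invoking Proposition 1.6a to know $\omega$ is central in the ambient algebra (which uses $r+s$ odd, automatic from $r+s\equiv 1\pmod 4$) and checking that the decomposition $\fK_{r,s}=\br\omega\oplus\fK_{r,s}^{\prime}$ restricts cleanly to centers, both of which are direct verifications.
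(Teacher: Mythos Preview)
Your proposal is correct and follows precisely the approach the paper intends: the paper's proof is the single sentence ``This follows immediately from Propositions 1.6 and 6.1,'' and you have simply written out what ``immediately'' means, using Proposition 6.1 for the decomposition of $\fK_{r,s}$ and Proposition 1.6a for the centrality of $\omega$. Your stronger statement $\fZ(\fK_{r,s}) = \br\,\omega \oplus \fZ(\fK_{r,s}^{\prime})$ in case 1) is exactly the content hiding behind the paper's one-line reference.
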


\begin{proof}  This follows immediately from Propositions 1.6 and 6.1.
\end{proof}

\begin{proposition} $\fH_{r,s} = \fK^{\prime}_{r,s} \oplus \fP_{r,s}$.
\end{proposition}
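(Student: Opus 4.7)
The plan is to combine the $\beta$-invariance of $\fH_{r,s}$ (Lemma 6.2) with the Cartan splitting of $\fG_{r,s}$ and the central decomposition $\fG_{r,s}=\fZ(\fG_{r,s})\oplus\fH_{r,s}$ of Proposition 1.7. Directness of the proposed sum is automatic: $\fK'_{r,s}\cap\fP_{r,s}\subseteq\fK_{r,s}\cap\fP_{r,s}=\{0\}$ by the Cartan decomposition established in Section 2, so the whole task reduces to verifying the equality of subspaces of $\fG_{r,s}$.

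The first move is to invoke Lemma 6.2: since $\beta$ is an involution of $\fG_{r,s}$ and $\fH_{r,s}$ is $\beta$-stable, working over a field of characteristic $\neq 2$ the ideal $\fH_{r,s}$ splits into its $\pm 1$-eigenspaces, namely
$\fH_{r,s}=(\fH_{r,s}\cap\fK_{r,s})\oplus(\fH_{r,s}\cap\fP_{r,s})=\fK'_{r,s}\oplus(\fH_{r,s}\cap\fP_{r,s})$.
Thus the proposition reduces to the inclusion $\fP_{r,s}\subseteq\fH_{r,s}$, which I would attack case by case according to Proposition 1.6 and the Remark before Proposition 6.1.

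If $r+s\not\equiv 1\pmod 4$, then $\fZ(\fG_{r,s})=\{0\}$ and $\fH_{r,s}=\fG_{r,s}$, making the inclusion trivial. If $r+s\equiv 1\pmod 4$ and $\omega\in\fK_{r,s}$ (equivalently, $s$ is even), I would mirror the proof of Proposition 6.1(1): for $\xi\in\fP_{r,s}$, write $\xi=a\omega+\eta$ with $a\in\br$ and $\eta\in\fH_{r,s}$; applying $\beta$ and using $\beta(\omega)=\omega$ together with $\beta(\xi)=-\xi$ produces $-2a\omega=\beta(\eta)+\eta\in\fH_{r,s}$, and the relation $\br\omega\cap\fH_{r,s}=\{0\}$ forces $a=0$, so $\xi=\eta\in\fH_{r,s}$.

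The step I expect to be the main obstacle is the remaining subcase $r+s\equiv 1\pmod 4$ with $\omega\in\fP_{r,s}$ (i.e.\ $s$ odd), where $\beta(\omega)=-\omega$ and the symmetric $\beta$-argument above no longer constrains the coefficient $a$. Here Proposition 6.1(2) gives $\fK'_{r,s}=\fK_{r,s}\subseteq\fH_{r,s}$, and I would combine this with the dimension formulas of Propositions 2.9, 2.12, 2.13 and Corollary 6.3 to complete the matching: the $\beta$-eigenspace decomposition already yields $\fH_{r,s}=\fK'_{r,s}\oplus(\fP_{r,s}\cap\fH_{r,s})$, and to pass from this to the stated $\fH_{r,s}=\fK'_{r,s}\oplus\fP_{r,s}$ I would absorb the $\omega$-direction of $\fP_{r,s}$ via the canonical projection $\fG_{r,s}\to\fH_{r,s}$ along $\br\omega=\fZ(\fG_{r,s})$, identifying the image of $\fP_{r,s}$ inside $\fH_{r,s}$ with $\fP_{r,s}\cap\fH_{r,s}$ and checking that the dimensions agree with $\dim\fH_{r,s}=\dim\fG_{r,s}-1$; together with the direct-sum property this closes the proof in every case.
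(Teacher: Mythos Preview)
Your analysis is far more careful than the paper's: the paper dispatches the proposition in one sentence (``an immediate consequence of $\fG_{r,s}=\fK_{r,s}\oplus\fP_{r,s}$ and $\fK'_{r,s}=\fK_{r,s}\cap\fH_{r,s}$''), while you correctly unpack the argument via Lemma~6.2 and the case split on $r+s\bmod 4$ and the parity of $s$. In the cases $r+s\not\equiv 1\pmod 4$, and $r+s\equiv 1\pmod 4$ with $s$ even, your argument is complete and correct.

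The difficulty you flag in the remaining subcase is not merely an obstacle to the proof but a counterexample to the statement itself. When $r+s\equiv 1\pmod 4$ and $s$ is odd, Lemma~2.8 gives $\beta(\omega)=(-1)^{s}\omega=-\omega$, so $\omega\in\fP_{r,s}$; yet $\omega\notin\fH_{r,s}$ by the description of $\fH_{r,s}$ in the proof of Lemma~6.2. Hence $\fP_{r,s}\not\subseteq\fH_{r,s}$, and a dimension count via Proposition~6.1(2) gives
\[
\dim\fK'_{r,s}+\dim\fP_{r,s}=\dim\fK_{r,s}+\dim\fP_{r,s}=\dim\fG_{r,s}=\dim\fH_{r,s}+1,
\]
so the asserted equality cannot hold in this subcase. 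Your projection manoeuvre therefore cannot rescue the literal statement: what it actually establishes is the corrected identity $\fH_{r,s}=\fK'_{r,s}\oplus(\fP_{r,s}\cap\fH_{r,s})$, which is precisely the $\beta$-eigenspace splitting of the $\beta$-stable ideal $\fH_{r,s}$ and is what the paper in fact uses downstream (the proof of Proposition~7.2 even redescribes ``$\fP_{r,s}$'' as the $-1$-eigenspace of $\beta$ \emph{in $\fH_{r,s}$}). In short, you have located a slip in the paper rather than a gap in your own reasoning.
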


\begin{proof}  This is an immediate consequence of the facts that $\fG_{r,s} = \fK_{r,s} \oplus \fP_{r,s}$ and $\fK^{\prime}_{r,s} = \fK_{r,s} \cap \fH_{r,s}$.
\end{proof}

\section {Involutive automorphisms and noncompact real forms}

\noindent A real Lie algebra $\fU$ is said to be $\mathit{compact}$ if the Killing form is negative definite on $\fU$.   This implies that every Lie group U with Lie algebra $\fU$ must be compact (cf. Proposition 6.6, Corollary 6.7 and Theorem 6.9 in chapter II of [H]).   Let $\fU$ be a compact,  real Lie algebra, and let $\tau$ be a nontrivial automorphsim of $\fU$.  Let $\fK_{0}$ and $\fP^{*}$ be the +1 and -1 eigenspaces of $\tau$ respectively.   Define a real Lie algebra $\fG_{0} = \fK_{0} \oplus \fP_{0} \subset \fU^{\bc}$, where $\fP_{0} = i \fP^{*}$.  Since $\fU = \fK_{0} \oplus \fP^{*}$ is compact the Killing form of $\fU$ is negative definite .  It follows that the Killing form of $\fG_{0}$ is negative definite on $\fK_{0}$ and positive definite on $\fP_{0}$.  By inspection $\fG_{0}^{\bc} = \fU^{\bc}$ and hence $\fG_{0}$ is a noncompact real form of $\fU^{\bc}$. 
\newline

\noindent The work of E. Cartan shows that if $\fU$ is a compact, $\mathit{simple}$ real Lie algebra, then every noncompact real form $\fG_{0}$ of $\fU^{\bc}$ arises from a suitable involutive automorphism $\tau$ of $\fU$ (cf. [He, pp.451-455])  For convenience later we say that $\fG_{0} = \fK_{0} \oplus \fP_{0}$ is $\mathit{induced}$ by $\tau$.
\newline

\noindent $\mathbf{Remark}$  Let $\tau$ be an involutive automorphism of a compact Lie algebra $\fU$, and let $\fG$ be the Lie algebra induced by $\tau$.  Let $\varphi$ be an automorphism of $\fU$, and let $\tau^{\prime} = \varphi \circ \tau \circ \varphi^{-1}$.  Then $\tau^{\prime}$ is also an involutive automorphism of $\fU$ that induces $\fG$.  This is a consequence of the fact that $\varphi$ maps the $+1$ (respectively $-1$)  eigenspace of $\tau$ onto the  $+1$ (respectively $-1$)  eigenspace of $\tau^{\prime}$.

\begin{proposition}  Let $\fG$ be a real semisimple Lie algebra.  Let $\tau_{1}, \tau_{2}$ be involutive automorphisms of compact Lie algebras $\fU_{1}, \fU_{2}$ that both induce $\fG$.  Then

	1)  $\fU_{1}$ is Lie algebra isomorphic to $\fU_{2}$
	
	2)  $\fK_{1} = Fix(\tau_{1})$ is Lie algebra isomorphic to $\fK_{2} = Fix(\tau_{2})$.
\end{proposition}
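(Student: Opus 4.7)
The plan is to reduce both assertions to the classical fact that any two Cartan decompositions of a real semisimple Lie algebra are conjugate by an inner automorphism of $\fG$ (see for example [He, Chapter VI]). Once we have that, complexifying the conjugating map will carry one compact real form inside $\fG^{\bc}$ to the other, and the same map will carry $\fK_1$ to $\fK_2$.

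First I would record the setup. By the construction preceding the proposition, each $\tau_i$ gives a decomposition $\fU_i = \fK_i \oplus \fP_i^{*}$ into $\pm 1$ eigenspaces, and the induced Lie algebra sits inside $\fU_i^{\bc}$ as $\fG_i = \fK_i \oplus i\fP_i^{*}$, with Killing form negative definite on $\fK_i$ and positive definite on $i\fP_i^{*}$. Therefore the hypothesis that both $\tau_1$ and $\tau_2$ induce $\fG$ translates, after fixing isomorphisms $\psi_i : \fG_i \to \fG$, to saying that $\fG$ carries two Cartan decompositions
\[
\fG = \psi_1(\fK_1) \oplus \psi_1(i\fP_1^{*}) = \psi_2(\fK_2) \oplus \psi_2(i\fP_2^{*}).
\]

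Next I would invoke the conjugacy theorem for Cartan decompositions: since $\fG$ is real semisimple, there exists $\varphi \in \mathrm{Int}(\fG)$ carrying the first decomposition to the second, i.e.\ $\varphi \circ \psi_1(\fK_1) = \psi_2(\fK_2)$ and $\varphi \circ \psi_1(i\fP_1^{*}) = \psi_2(i\fP_2^{*})$. Pulling back through the $\psi_i$ yields a Lie algebra isomorphism
\[
\widetilde{\varphi} = \psi_2^{-1} \circ \varphi \circ \psi_1 : \fG_1 \longrightarrow \fG_2
\]
that sends $\fK_1$ onto $\fK_2$ and $i\fP_1^{*}$ onto $i\fP_2^{*}$, and hence also $\fP_1^{*}$ onto $\fP_2^{*}$ (dividing by $i$ is intrinsic inside $\fU_i^{\bc}$).

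Finally I would complexify. The map $\widetilde{\varphi}$ extends uniquely to a $\bc$-linear Lie algebra isomorphism $\widetilde{\varphi}^{\bc} : \fU_1^{\bc} = \fG_1^{\bc} \to \fG_2^{\bc} = \fU_2^{\bc}$. Because $\widetilde{\varphi}^{\bc}$ preserves the two real subspaces $\fK_i$ and $\fP_i^{*}$ separately, it carries $\fU_1 = \fK_1 \oplus \fP_1^{*}$ onto $\fU_2 = \fK_2 \oplus \fP_2^{*}$, proving 1); and its restriction to $\fK_1$ is an isomorphism onto $\fK_2$, proving 2). The only substantive obstacle is the conjugacy-of-Cartan-decompositions theorem itself, but for a real semisimple $\fG$ this is standard and can be cited directly from Helgason.
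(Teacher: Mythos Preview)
Your argument is correct and rests on the same core tool as the paper: the conjugacy of any two Cartan decompositions of a real semisimple Lie algebra (the paper cites this as Theorem~7.2, Chapter~III of [He]). The organization differs slightly. The paper first proves 1) independently by observing $\fU_1^{\bc} = \fG^{\bc} = \fU_2^{\bc}$ and invoking uniqueness of compact real forms (Corollary~7.3, Chapter~III of [He]); it then uses 1) to assume $\fU_1 = \fU_2 = \fU$ and applies Cartan-decomposition conjugacy inside $\fG$ to get 2). You instead run Cartan-decomposition conjugacy first to obtain an isomorphism $\widetilde{\varphi}:\fG_1 \to \fG_2$ respecting the decompositions, and then complexify $\widetilde{\varphi}$ to recover both 1) and 2) at once. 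Your route is a touch more economical in that it cites one theorem rather than two; the paper's route is marginally cleaner in that once $\fU_1=\fU_2$ one works inside a single ambient $\fU^{\bc}$ and no bookkeeping with $\psi_i$ is needed. One small presentational point: the claim that $\widetilde{\varphi}$ sends $\fP_1^{*}$ onto $\fP_2^{*}$ only makes sense after complexification (since $\fP_i^{*} \not\subset \fG_i$), so that sentence should be moved after you introduce $\widetilde{\varphi}^{\bc}$.
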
 

\begin{proof}  Observe that $\fU_{1}^{\bc} = \fG^{\bc} = \fU_{2}^{\bc}$.  Assertion 1) now follows from the uniqueness of compact real forms ( see for example Corollary 7.3, chapter III of [He]).  To prove 2) we may assume that $\fU_{1} = \fU_{2} = \fU$.  Let $\fK_{j}, \fP_{j}^{*}$ denote the $+1, -1$-eigenspaces of $\tau_{j}$ for $j = 1,2$.  Then $\fG = \fK_{j} \oplus \fP_{j} \subset \fU^{\bc}$ for $j = 1,2$, where $\fP_{j} = i~ \fP_{j}^{*}$.  The Killing form of $\fU$ is negative definite, and hence the Killing form of $\fG$ is positive definite on $\fP_{j}$ and negative definite on $\fK_{j}$ for $i = 1,2$.  The subspace $\fK_{j}$ is a subalgebra of $\fG$ since it is the fixed point set of the automorphism $\tau_{j}$ for $j = 1,2$.  Hence $\fG = \fK_{j} \oplus \fP_{j}$ is a $\mathit{Cartan~decomposition}$ of $\fG$ for $j = 1,2$ (cf. Proposition 7.4, chapter III of [He]).  Assertion 2) now follows from Theorem 7.2, chapter III of [He]. 
\end{proof}

\begin{proposition}  There exists a compact Lie algebra $\fU$ and an involutive automorphism $\tau$ of $\fU$ such that 

	1)  $\tau$ induces $\fH_{r,s}$.
	
	2)  $Fix(\tau) \approx \fK^{\prime}_{r,s}$
\end{proposition}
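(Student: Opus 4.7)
My plan is to construct $\fU$ as a compact real form of $\fH_{r,s}^{\bc}$ by ``rotating'' $\fP_{r,s}$ to $i\fP_{r,s}$ inside $\fH_{r,s}^{\bc}$, and to let $\tau$ be the involution of $\fU$ that flips this rotated summand. The fact that $\fU$ is a real form of $\fH_{r,s}^{\bc}$ and that $\tau$ induces $\fH_{r,s}$ with $\mathrm{Fix}(\tau) = \fK^{\prime}_{r,s}$ will be essentially structural; the real content is compactness of $\fU$.

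By Lemma 6.2, $\beta$ preserves $\fH_{r,s}$, and by Proposition 6.4, $\fH_{r,s} = \fK^{\prime}_{r,s} \oplus \fP_{r,s}$; thus the $+1$ and $-1$-eigenspaces of $\beta|_{\fH_{r,s}}$ are $\fK^{\prime}_{r,s}$ and $\fP_{r,s}$ respectively. Set $\fU := \fK^{\prime}_{r,s} \oplus i\fP_{r,s} \subset \fH_{r,s}^{\bc}$. The bracket relations of Proposition 2.4, applied within $\fH_{r,s}$, give $[\fK^{\prime}_{r,s}, \fK^{\prime}_{r,s}] \subset \fK^{\prime}_{r,s}$, $[\fK^{\prime}_{r,s}, \fP_{r,s}] \subset \fP_{r,s}$, and $[\fP_{r,s}, \fP_{r,s}] \subset \fK_{r,s} \cap \fH_{r,s} = \fK^{\prime}_{r,s}$, so $\fU$ is closed under the inherited bracket and $\fU \otimes_{\br} \bc = \fH_{r,s}^{\bc}$. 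Define $\tau: \fU \to \fU$ by $\tau|_{\fK^{\prime}_{r,s}} = \mathrm{Id}$ and $\tau|_{i\fP_{r,s}} = -\mathrm{Id}$; the same bracket relations force $\tau$ to be an involutive Lie-algebra automorphism with $\mathrm{Fix}(\tau) = \fK^{\prime}_{r,s}$, and the real Lie algebra induced by $\tau$ in the sense of Section 7 is $\fK^{\prime}_{r,s} \oplus i \cdot (i\fP_{r,s}) = \fK^{\prime}_{r,s} \oplus \fP_{r,s} = \fH_{r,s}$.

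The main obstacle is to show $\fU$ is compact, i.e.\ that its Killing form is negative definite. I would compute the Killing form $B_{\fH_{r,s}}$ explicitly on the basis $\{e_I : e_I \in \fH_{r,s}\}$ using Propositions 3.1 and 3.2. For two such basis elements, $[e_I, e_J]$ is either $0$ (if they commute) or $2 e_I e_J$ (if they anti-commute); in the latter case $e_I$ also anti-commutes with $e_I e_J$, so $\mathrm{ad}(e_I)^2(e_J) = 4 e_I^2 \, e_J$. Summing diagonal coefficients yields $B_{\fH_{r,s}}(e_I, e_I) = 4 e_I^2 N_I$, where $N_I$ is the number of basis elements of $\fH_{r,s}$ anti-commuting with $e_I$; for $I \neq J$, any nonzero diagonal contribution to $\mathrm{ad}(e_I)\mathrm{ad}(e_J)(e_K)$ would force $e_I e_J$ to be a scalar, which only happens if $I = J$, so $B_{\fH_{r,s}}(e_I, e_J) = 0$. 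By the nondegeneracy of $B_{\fH_{r,s}}$ (Proposition 1.7), each $N_I > 0$; combined with $e_I^2 = -1$ on $\fK^{\prime}_{r,s}$ and $e_I^2 = 1$ on $\fP_{r,s}$ (Proposition 2.5), $B_{\fH_{r,s}}$ is negative definite on $\fK^{\prime}_{r,s}$, positive definite on $\fP_{r,s}$, and these subspaces are $B$-orthogonal. Extending by complex bilinearity gives $B_\fU(iX, iY) = -B_{\fH_{r,s}}(X, Y)$ for $X, Y \in \fP_{r,s}$, so $B_\fU$ is negative definite on both $\fK^{\prime}_{r,s}$ and $i\fP_{r,s}$ with vanishing cross terms; hence $\fU$ is compact, completing the proof.
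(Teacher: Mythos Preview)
Your proof is correct and follows the same construction as the paper: set $\fU = \fK'_{r,s} \oplus i\fP_{r,s} \subset \fH_{r,s}^{\bc}$, take $\tau$ to be the involution with $\pm 1$-eigenspaces $\fK'_{r,s}$ and $i\fP_{r,s}$, and deduce compactness from the sign of the Killing form on the two summands. The only difference is that the paper imports the definiteness of the Killing form on $\fK'_{r,s}$ and $\fP_{r,s}$ from Corollary~6.10 of [E], whereas you reprove it directly from the commutation relations (Propositions~3.1--3.2) together with the nondegeneracy in Proposition~1.7; this makes your argument more self-contained but otherwise identical in structure.
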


\begin{proof}  Let $\beta : C\ell(r,s) \rightarrow C\ell(r,s)$ be the automorphism constructed in Proposition 2.1 that leaves invariant $\fG_{r,s}$.  If $I = (i_{1}, ... , i_{k})$ is an arbitrary multi-index, then by inspection $\beta(e_{I}) = \pm e_{I}$.  Hence $\beta$ leaves invariant $\fH_{r,s}$ by the construction of $\fH_{r,s}$ in Proposition 6.12 of [E].  Recall from Proposition 6.3 that  $\fH_{r,s} = \fK^{\prime}_{r,s} \oplus \fP_{r,s}$, where $\fK^{\prime}_{r,s}, \fP_{r,s}$ are the $+1, -1$-eigenspaces of $\beta$ in $\fH_{r,s}$.  Let $\fU = \fK^{\prime}_{r,s} \oplus i~\fP_{r,s} \subset \fG^{\bc}$.  By Proposition 2.5 above and Corollary 6.10 of [E]  the Killing form of $\fH_{r,s}$ is positive definite on $\fP_{r,s}$ and negative definite on $\fK^{\prime}_{r,s}$, and it follows that the Killing form of $\fU$ is negative definite.  The fact that $\beta$ is an involutive automorphism of $\fH_{r,s}$ implies that $ [\fK^{\prime}_{r,s}, \fK^{\prime}_{r,s}] \subset \fK^{\prime}_{r,s}, [\fP_{r,s}, \fP_{r,s}] \subset \fK^{\prime}_{r,s}$ and $[\fK^{\prime}_{r,s}, \fP_{r,s}] \subset \fP_{r,s}$.  Similar bracket relations hold between $\fK^{\prime}_{r,s}$ and $i~\fP_{r,s}$, and it follows there exists an involutive automorphism $\tau$ of $\fU$ whose $+1,-1$-eigenspaces are $\fK^{\prime}_{r,s}, i~\fP_{r,s}$.  By inspection assertion 1)  follows immediately.  Assertion 2) follows since $Fix(\tau) \approx Fix(\beta) = \fK^{\prime}_{r,s}$.
\end{proof}

\section{The case that $r+s \equiv~3~(mod~4)$}

\noindent  For notational convenience we adopt the notation $\fG^{\bc}$ to denote $\fG \otimes \bc$.  Note that $\fG_{r,s} = \fH_{r,s}$ when $r+s \equiv~3~(mod~4)$ by Propositions 1.6 and 1.7.

\begin{proposition}  Let $r+s \equiv 3~mod~4$.  Then

	1)  If s is even, then there exist simple ideals $\fG_{1}, \fG_{2}$ of $\fG_{r,s}$ and a compact, simple Lie algebra $\fU$ such that $\fG_{r,s} = \fG_{1} \oplus \fG_{2}$ and $\fG_{1}^{\bc} = \fG_{2}^{\bc} = \fU^{\bc}$ is a complex simple Lie algebra.
	
	2)  If s is odd, then $\fG_{r,s}$ is simple and is the realification of a complex simple Lie algebra.
\end{proposition}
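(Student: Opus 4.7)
The plan is to exploit the volume element $\omega = e_{1} e_{2} \cdots e_{r+s}$, which lies in the center of $C\ell(r,s)$ by Proposition 1.6(a) since $n = r+s$ is odd. The sign of $\omega^{2}$ will distinguish the two cases in the proposition.

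First I would carry out two sign computations. By Lemma 2.6, $\omega^{2} = (-1)^{n(n-1)/2} \prod_{i=1}^{n} e_{i}^{2} = (-1)^{n(n-1)/2 + r}$ (using $e_{i}^{2} = -1$ for $i \leq r$ and $e_{i}^{2} = 1$ for $i > r$). When $n \equiv 3 \pmod 4$, the exponent $n(n-1)/2$ is odd, so $\omega^{2} = (-1)^{r+1}$. Thus $\omega^{2} = 1$ if $s$ is even (equivalently $r$ odd) and $\omega^{2} = -1$ if $s$ is odd. A parallel computation using that $c$ is an anti-automorphism gives $c(\omega) = (-1)^{n + n(n-1)/2} \omega = \omega$ for $n \equiv 3 \pmod 4$. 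Consequently, for $\xi \in \fG_{r,s}$ one has $c(\omega \xi) = c(\xi) c(\omega) = -\xi \omega = -\omega \xi$, so multiplication by $\omega$ preserves $\fG_{r,s}$. Centrality of $\omega$ also gives $\omega [\xi, \eta] = [\omega \xi, \eta]$ for all $\xi, \eta \in \fG_{r,s}$.

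For part (1), with $s$ even and $\omega^{2} = 1$, I would set $\fG_{+} = \{\xi \in \fG_{r,s} : \omega \xi = \xi\}$ and $\fG_{-} = \{\xi \in \fG_{r,s} : \omega \xi = -\xi\}$, so that $\fG_{r,s} = \fG_{+} \oplus \fG_{-}$. The intertwining relation above shows $\fG_{+}$ and $\fG_{-}$ are ideals of $\fG_{r,s}$. Neither is zero: if $\fG_{+} = 0$ then $\omega e_{1} = -e_{1}$ would force $\omega = -1$ in $C\ell(r,s)$, contradicting the linear independence of $\fB$ from Proposition 1.3. By Corollary 5.5, $\fG_{r,s} \otimes \bc \cong \fU^{\bc} \oplus \fU^{\bc}$, where $\fU$ is the appropriate compact simple Lie algebra ($\fs\fp(2^{4k})$ or $\fs\fo(2^{4k+3}, \br)$). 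Since $\fU^{\bc}$ is simple, the only proper nonzero complex ideals of $\fU^{\bc} \oplus \fU^{\bc}$ are the two factors, so each $\fG_{\pm} \otimes \bc$ must coincide with one factor, whence each $\fG_{\pm}$ is a real form of $\fU^{\bc}$ and is therefore simple as a real Lie algebra.

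For part (2), with $s$ odd and $\omega^{2} = -1$, the operator $J$ defined by multiplication by $\omega$ is a complex structure on $\fG_{r,s}$ that commutes with brackets, promoting $\fG_{r,s}$ to a complex Lie algebra. Let $\fG^{1,0}$ and $\fG^{0,1}$ be the $\pm i$-eigenspaces of $J \otimes 1$ on $\fG_{r,s} \otimes \bc$; then $(\fG_{r,s}, J) \cong \fG^{1,0}$ as complex Lie algebras, and $\fG^{1,0}$ is a complex ideal of $\fG_{r,s} \otimes \bc \cong \fU^{\bc} \oplus \fU^{\bc}$ of complex dimension $\dim_{\bc} \fU^{\bc}$. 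The same ideal analysis forces $\fG^{1,0}$ to be one of the two simple factors, so $(\fG_{r,s}, J) \cong \fU^{\bc}$ and $\fG_{r,s}$ is the realification of a complex simple Lie algebra, hence simple over $\br$. The main obstacle will be matching the $\omega$-induced decomposition of $\fG_{r,s} \otimes \bc$ with the two simple factors from Corollary 5.5; in both cases this reduces to a dimension count plus the fact that the only proper nonzero ideals of $\fU^{\bc} \oplus \fU^{\bc}$ are the two factors, with Corollary 5.5 providing the compact simple $\fU$ demanded by the proposition.
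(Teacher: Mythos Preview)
Your proof is correct and follows essentially the same approach as the paper: in part (1) your eigenspaces $\fG_{\pm}$ are exactly the paper's $\pi^{\pm}\cdot\fG_{r,s}$ with $\pi^{\pm}=\tfrac{1}{2}(1\pm\omega)$, and the identification of $\fG_{\pm}^{\bc}$ with the two simple factors of $\fU^{\bc}\oplus\fU^{\bc}$ via Corollary~5.5 proceeds identically (you add an explicit nonvanishing check that the paper leaves implicit). For part (2) the paper cites the lemma $\fH_{0}^{\bc}\cong\fH\oplus\fH$ for a complex Lie algebra $\fH$ and then counts simple ideals, whereas you unpack this same fact by working directly with the $+i$-eigenspace $\fG^{1,0}$ of $J\otimes 1$ and matching it to one $\fU^{\bc}$ factor by dimension; the two arguments are equivalent.
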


\noindent $\mathbf{Remarks}$

	1)   Note that $\omega$ lies in the center of $C\ell(r,s)$ by Proposition 1.6.  Moreover $\omega \notin \fG_{r,s}$ by Proposition 1.5 since  $r+s \equiv~3~(mod~4)$. Hence $c(\omega) = \omega$ since $c(\omega) = \pm \omega$ by inspection. 
	
	2)  By Lemma 2.6 we have $\omega^{2} = (-1)^{(r+s)(r+s-1)/2}~ e_{1}^{2} e_{2}^{2} ... e_{r+s}^{2} = - e_{1}^{2} e_{2}^{2} ... e_{r+s}^{2} = (-1)^{r+1}$.

\begin{proof}  We prove 1).  If s is even, then r is odd and $\omega^{2} = 1$.  Following Proposition 3.5, chapter I of [LM] we define $\pi^{+} = \frac{1}{2} (1+ \omega)$ and  $\pi^{-} = \frac{1}{2} (1- \omega)$.  Note that both $\pi^{+}$ and $\pi^{-}$ lie in the center of $C\ell(r,s)$ since $\omega$ does.  Let $\fG_{1} = \pi^{+} \cdot \fG_{r,s}$ and $\fG_{2} = \pi^{-} \cdot \fG_{r,s}$.  We show that $\fG_{1}$ and $\fG_{2}$ have the desired properties.
\newline

\noindent We observed above that $c(\omega) = \omega$.  If $\xi \in \fG_{r,s}$, then $c(\omega \xi) = c(\xi) c(\omega) = - \xi \omega = - \omega \xi$.  It follows that  $\fG_{r,s}$ is invariant under left (or right) multiplication by $\omega$, and hence also by $\pi^{+}$ and $\pi^{-}$.  In particular $\fG_{i} \subset \fG_{r,s}$ for $i = 1,2$.  We conclude that $\fG_{r,s} = \fG_{1} \oplus \fG_{2}$, vector space direct sum, since $\pi^{+} + \pi^{-} = 1$.
\newline

\noindent  Note that $\fG_{1} \cdot \fG_{2} = \fG_{2} \cdot \fG_{1} = \{0 \}$ since $\pi^{+} \cdot \pi^{-} = \pi^{-} \cdot \pi^{+} = 0$.  Hence $[\fG_{1}, \fG_{2}] = \{0 \}$.  Note also that $\fG_{1}$ and $\fG_{2}$ are closed under the multiplication of $C\ell(r,s)$ since $\pi^{+} \cdot \pi^{+} = \pi^{+}$ and $\pi^{-} \cdot \pi^{-} = \pi^{-}$. Hence $\fG_{1}$ and $\fG_{2}$ are closed under left or right multiplication by elements of $C\ell(r,s)$.  It follows that $\fG_{i}$ is an ideal of $\fG_{r,s}$ for $i = 1,2$.
\newline

\noindent From Corollary 5.5 we know that $\fG_{r,s}^{\bc} = \fU^{\bc} \oplus \fU^{\bc}$ for some compact, simple Lie algebra $\fU$.  In each of the two cases $\fU^{\bc}$ is a complex, simple Lie algebra.  We also know that $\fG_{r,s}^{\bc} = \fG_{1}^{\bc} \oplus \fG_{2}^{\bc}$.  The decomposition of a complex, semisimple Lie algebra into a direct sum of simple ideals is unique up to order.  It follows that $\fG_{1}^{\bc} = \fG_{2}^{\bc} = \fU^{\bc}$, a complex simple Lie algebra.  This completes the proof of 1). 
\newline

\noindent We prove 2).  If s is odd, then r is even and $\omega^{2} = (-1)^{r+1} = -1$.  As above $\fG_{r,s}$ is invariant under left or right multiplication by $\omega$.  We may now define a complex vector space structure on $\fG_{r,s}$ by $(a + ib) \xi = a \xi + b \omega \xi$ for $a,b \in \br$ and $\xi \in \fG_{r,s}$.  The bracket operation on $\fG_{r,s}$ is $\bc$-bilinear since $ad~ \xi(\omega \eta) = \xi(\omega \eta) - (\omega \eta) \xi = \omega (\xi \eta - \eta \xi) = \omega ad~\xi(\eta)$.  Hence $\fG_{r,s}$ is a complex Lie algebra, and $\fG_{r,s}$ is semisimple over $\bc$ since it is semisimple over $\br$.
\end{proof}

\begin{lemma}  Let $\fH$ be a complex Lie algebra, and let $\fH_{0}$ denote $\fH$ regarded as a real Lie algebra.  Then $\fH_{0}^{\bc} \approx \fH \oplus \fH$.
\end{lemma}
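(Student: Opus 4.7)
The plan is to exhibit the desired decomposition $\fH_{0}^{\bc} \cong \fH \oplus \fH$ as the eigenspace decomposition of the $\bc$-linear extension of multiplication by $i$.  Write $J : \fH_{0} \to \fH_{0}$ for the real-linear endomorphism defined by $J(x) = ix$ in the complex Lie algebra $\fH$.  Because the original bracket on $\fH$ is $\bc$-bilinear, $J$ satisfies $J^{2} = -Id$ together with the identities $J[x,y] = [Jx,y] = [x,Jy]$ for all $x,y \in \fH_{0}$.  Extend $J$ $\bc$-linearly to an endomorphism $\hat{J}$ of $\fH_{0}^{\bc} = \fH_{0} \otimes_{\br} \bc$; then $\hat{J}^{2} = -Id$ on a vector space over $\bc$, so $\hat{J}$ is diagonalizable with eigenvalues $\pm i$.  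Let $\fH^{\pm} = \ker(\hat{J} \mp i \cdot Id)$, so that $\fH_{0}^{\bc} = \fH^{+} \oplus \fH^{-}$ as complex vector spaces.

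First I would verify that $\fH^{+}$ and $\fH^{-}$ are complex Lie ideals of $\fH_{0}^{\bc}$ with $[\fH^{+},\fH^{-}] = 0$.  The bracket identities extend $\bc$-linearly to $\hat{J}[u,v] = [\hat{J}u,v] = [u,\hat{J}v]$ for all $u,v \in \fH_{0}^{\bc}$.  If $u \in \fH^{\varepsilon}$ and $v \in \fH^{\varepsilon'}$ with $\varepsilon,\varepsilon' \in \{+,-\}$, then $\varepsilon \, i \, [u,v] = \hat{J}[u,v] = \varepsilon' \, i \, [u,v]$, which forces $[u,v] = 0$ when $\varepsilon \neq \varepsilon'$ and $[u,v] \in \fH^{\varepsilon}$ when $\varepsilon = \varepsilon'$.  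This already shows that $\fH_{0}^{\bc}$ is the direct sum of two complex Lie ideals.

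Next I would construct an explicit $\bc$-Lie algebra isomorphism $\varphi : \fH \to \fH^{+}$ by the formula $\varphi(x) = \frac{1}{2}(x \otimes 1 - Jx \otimes i)$.  One checks directly that $\hat{J}\varphi(x) = i\,\varphi(x)$ (so that the image lies in $\fH^{+}$), that $\varphi(Jx) = i\,\varphi(x)$ ($\bc$-linearity with respect to the original complex structure on $\fH$), and, using the consequence $[Jx,Jy] = -[x,y]$ of the identities above, that $[\varphi(x),\varphi(y)] = \varphi([x,y])$; the normalization $\frac{1}{2}$ is precisely what absorbs the factor of $2$ that appears in this last bracket calculation.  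Injectivity is immediate, and the dimension count $\dim_{\bc}\fH^{+} + \dim_{\bc}\fH^{-} = 2\dim_{\bc}\fH$ forces surjectivity.  A parallel construction with $\psi(x) = \frac{1}{2}(x \otimes 1 + Jx \otimes i)$ identifies $\fH^{-}$ with $\fH$ as a complex Lie algebra; strictly $\psi$ intertwines the two complex structures with a sign, but in the intended application of Proposition 8.1 the Lie algebra $\fH$ is complex simple and hence isomorphic to its complex conjugate, so the conclusion is unaffected.

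The main technical point will be the bracket identity $[\varphi(x),\varphi(y)] = \varphi([x,y])$, which depends crucially on the $\bc$-bilinearity of the bracket on $\fH$ and the choice of normalization $\frac{1}{2}$; everything else reduces to routine linear algebra on the two-dimensional eigenspace decomposition of $\hat{J}$.
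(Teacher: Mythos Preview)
Your argument is correct and is essentially the standard proof. The paper itself does not give a proof at all: it simply writes ``See for example the proof of Theorem 2.51 of [W]'' and moves on. What you have written is precisely the kind of argument such a reference would contain, namely the $\pm i$ eigenspace decomposition of the $\bc$-linear extension of the complex structure $J$, together with the explicit projectors $\frac{1}{2}(x\otimes 1 \mp Jx\otimes i)$.

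One remark worth making explicit. As you correctly note at the end, the general fact is really $\fH_{0}^{\bc}\cong \fH\oplus\overline{\fH}$, with $\psi$ conjugate-linear rather than $\bc$-linear; the statement $\fH_{0}^{\bc}\cong \fH\oplus\fH$ as written in the lemma requires $\fH\cong\overline{\fH}$. You handle this by observing that in the application (Proposition~8.1) $\fH$ is complex simple, hence has a real form, hence is isomorphic to its conjugate. That is the right fix, and in fact Lemma~8.2 is only ever invoked in the paper for complex simple $\fH$, so nothing is lost. Your dimension argument for surjectivity implicitly uses that $\varphi$ and $\psi$ are both injective (or, equivalently, that complex conjugation on $\fH_{0}^{\bc}$ swaps $\fH^{+}$ and $\fH^{-}$), which you might state in one line for completeness.
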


\begin{proof}  See for example the proof of Theorem 2.51 of [W].
\end{proof}

\noindent  As we observed earlier in the proof of 1) $\fG_{r,s}^{\bc} = \fU^{\bc} \oplus \fU^{\bc}$ for some compact simple Lie algebra $\fU$.  We also noted that $\fU^{\bc}$ is simple as a complex Lie algebra.  Regarding $\fG_{r,s}$ as a complex Lie algebra we may write $\fG_{r,s} = \fG_{1} \oplus \fG_{2} \oplus ... \oplus \fG_{N}$, where $\{\fG_{1}, ... , \fG_{N} \}$ are the complex, simple ideals of $\fG_{r,s}$ (cf. Corollary 6.2, chapter II of [H]).   Regarding the ideals $\fG_{i}$ now as real Lie algebras it follows from Lemma 8.2 that $\fG_{i}^{\bc} \approx \fG_{i} \oplus \fG_{i}$ for all i.  Hence $\fG_{r,s}^{\bc}$ is the direct sum of 2N complex, simple ideals.  This is only possible if $N = 1$.  Hence $\fG_{r,s} = \fG_{1}$ is a complex simple Lie algebra.  The assertion 2) follows.
\newline

\noindent  The next result gives greater precision to Proposition 7.2 in this case.

\begin{proposition}  Let $r+s \equiv 3~(mod~4)$ and let s be even.  Let $\fG_{r,s} = \fG_{1} \oplus \fG_{2}$ be the decomposition of Proposition 8.1 such that $\fG_{1}^{\bc} = \fG_{2}^{\bc} = \fU^{\bc}$ for some simple compact Lie algebra $\fU$.  Then 

	1)  There exist involutive automorphisms $\tau_{1}$ and $\tau_{2}$ of $\fU$ such that $\tau_{\alpha}$ induces $\fG_{\alpha}$ for $\alpha = 1,2$.  Hence $\tau = \tau_{1} \times \tau_{2}$ induces $\fG_{r,s}$.
	
	2) $\fK_{r,s} \approx Fix(\tau) = Fix(\tau_{1}) \times Fix(\tau_{2})$.
\end{proposition}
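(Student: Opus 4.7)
The plan is to refine the automorphism obtained in Proposition 7.2 using the decomposition of Proposition 8.1. Since $r+s \equiv 3~(mod~4)$ we have $\fH_{r,s} = \fG_{r,s}$ and $\fK_{r,s}^{\prime} = \fK_{r,s}$ (from the discussion preceding Proposition 6.1). Proposition 7.2 thus supplies a compact Lie algebra $\tilde{\fU} = \fK_{r,s} \oplus i \fP_{r,s} \subset \fG_{r,s}^{\bc}$ together with an involutive automorphism $\tilde{\tau}$, extended from $\beta$, inducing $\fG_{r,s}$ and satisfying $\mathrm{Fix}(\tilde{\tau}) = \fK_{r,s}$.

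The first key step is to verify that $\beta$ preserves the product decomposition $\fG_{r,s} = \fG_{1} \oplus \fG_{2} = \pi^{+} \cdot \fG_{r,s} \oplus \pi^{-} \cdot \fG_{r,s}$, where $\pi^{\pm} = \frac{1}{2}(1 \pm \omega)$.  This reduces to $\beta(\omega) = \omega$.  Since $\beta(e_{i}) = e_{i}$ for $1 \leq i \leq r$ and $\beta(e_{r+j}) = - e_{r+j}$ for $1 \leq j \leq s$, one has $\beta(\omega) = (-1)^{s} \omega$, which equals $\omega$ precisely because $s$ is even.  Hence $\beta(\pi^{\pm}) = \pi^{\pm}$, and $\beta$ restricts to an automorphism of each $\fG_{\alpha}$.

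Extending $\beta$ complex-linearly to $\fG_{r,s}^{\bc} = \fG_{1}^{\bc} \oplus \fG_{2}^{\bc}$ then preserves both summands, so $\tilde{\tau}$ respects the splitting $\tilde{\fU} = \tilde{\fU}_{1} \oplus \tilde{\fU}_{2}$ with $\tilde{\fU}_{\alpha} := \tilde{\fU} \cap \fG_{\alpha}^{\bc}$.  Each $\tilde{\fU}_{\alpha}$ is a compact real form of $\fG_{\alpha}^{\bc} = \fU^{\bc}$, so by the uniqueness of compact real forms (Corollary 7.3, chapter III of [He]) we may fix isomorphisms $\varphi_{\alpha} : \tilde{\fU}_{\alpha} \rightarrow \fU$ and set $\tau_{\alpha} := \varphi_{\alpha} \circ (\tilde{\tau}|_{\tilde{\fU}_{\alpha}}) \circ \varphi_{\alpha}^{-1}$.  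Each $\tau_{\alpha}$ is then an involutive automorphism of $\fU$ which, by construction, induces $\fG_{\alpha}$; consequently $\tau := \tau_{1} \times \tau_{2}$ induces $\fG_{1} \oplus \fG_{2} = \fG_{r,s}$, giving assertion 1).

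For assertion 2), the product structure of $\tau$ immediately gives $\mathrm{Fix}(\tau) = \mathrm{Fix}(\tau_{1}) \times \mathrm{Fix}(\tau_{2})$, and transporting back through the $\varphi_{\alpha}$ identifies this with $\mathrm{Fix}(\tilde{\tau}|_{\tilde{\fU}_{1}}) \oplus \mathrm{Fix}(\tilde{\tau}|_{\tilde{\fU}_{2}}) = \mathrm{Fix}(\tilde{\tau}) = \fK_{r,s}$.  The main subtlety lies in transporting the intrinsic involutions $\tilde{\tau}|_{\tilde{\fU}_{\alpha}}$ to a single abstract $\fU$ via the $\varphi_{\alpha}$; this is what allows the conclusion to be phrased in terms of one fixed compact $\fU$ (as in the statement) rather than two a priori different compact real forms, and it relies essentially on the uniqueness-of-compact-real-forms theorem cited above together with the remark following Proposition 7.1, which guarantees that the conjugate involution $\tau_{\alpha}$ continues to induce the same real Lie algebra.
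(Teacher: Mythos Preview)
Your proof is correct and follows essentially the same route as the paper. Both arguments hinge on the observation that $\beta(\omega)=(-1)^{s}\omega=\omega$ when $s$ is even, so $\beta$ preserves each factor $\fG_{\alpha}$; both then form the compact pieces $\fK_{\alpha}\oplus i\fP_{\alpha}$, invoke uniqueness of compact real forms to identify them with a single $\fU$, and read off $\tau_{\alpha}$ and $\mathrm{Fix}(\tau)$ from the eigenspace decomposition. The only cosmetic difference is that you begin with the global object $\tilde{\fU}$ from Proposition~7.2 and restrict, whereas the paper builds $\fU_{\alpha}=\fK_{\alpha}\oplus i\fP_{\alpha}$ directly on each factor; your intersection $\tilde{\fU}\cap\fG_{\alpha}^{\bc}$ is exactly the paper's $\fU_{\alpha}$, so the two constructions coincide.
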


\begin{proof} We begin the proof of the Proposition.  Let $\beta : \fG_{r,s} \rightarrow \fG_{r,s}$ be the automorphism constructed in Proposition 2.1 whose +1 eigenspace is $\fK_{r,s}$ and whose $-1$ eigenspace is $\fP_{r,s}$.  Note that $\beta(\omega) = (-1)^{s} \omega = \omega$ since s is even.  It follows that $\beta$ fixes $\pi^{+}$ and $\pi^{-}$ and leaves invariant $\fG_{1}$ and $\fG_{2}$.  Let $\beta_{\alpha}$ denote the restriction of $\beta$ to $\fG_{\alpha}$ for $\alpha = 1,2$.  Let $\fK_{\alpha},$ and $\fP_{\alpha}$ denote the $+1$ and $-1$ eigenspaces of $\beta_{\alpha}$ in $\fG_{\alpha} = \fK_{\alpha} \oplus \fP_{\alpha}$ and define a real Lie algebra $\fU_{\alpha} = \fK_{\alpha} \oplus i \fP_{\alpha} \subset \fG_{\alpha}^{\bc}$.  By Proposition 2.5 above and  Corollary 6.10 of [E] the Killing form of $\fG_{\alpha}$ is negative definite on $\fK_{\alpha}$ and positive definite on $\fP_{\alpha}$.  Hence the Killing form of $\fU_{\alpha}$ is negative definite on $\fU_{\alpha}$ for $\alpha = 1,2$.

\begin{lemma}  $\fU_{1}$ and $\fU_{2}$ are isomorphic to $\fU$ as Lie algebras, where $\fU^{\bc} = \fG_{\alpha}^{\bc}$ for $\alpha = 1,2$.  In particular $\fU_{1}$ and $\fU_{2}$ are compact simple Lie algebras.
\end{lemma}

\begin{proof} Note $\fU_{\alpha}^{\bc} = \fG_{\alpha}^{\bc} = \fU^{\bc}$ for $\alpha = 1,2$.  Hence both $\fU_{\alpha}$ and $\fU$ are compact real forms of $\fU^{\bc}$ and must be isomorphic (cf.  Corollary 7.3, chapter III of [H]).
\end{proof}

\noindent  We now conclude the proof of Proposition 8.3.  For $\alpha = 1,2$ define $\tau_{\alpha}$ to be the involutive automorphism of $\fU_{\alpha}$ whose $+1$ and $-1$ eigenspaces are $\fK_{\alpha}$ and i $\fP_{\alpha}$ respectively.  By inspection $\tau_{\alpha}$ induces $\fG_{\alpha}$ for $\alpha = 1,2$ and $Fix(\tau_{\alpha}) \approx \fK_{\alpha} = Fix(\beta_{\alpha})$.  Let $\tau = \tau_{1} \times \tau_{2}$, an involutive automorphism of $\fU \times \fU$ that induces $\fG = \fG_{1} \oplus \fG_{2}$. By Lemma 8.4 we may assume that $\fU_{1} = \fU_{2} = \fU$.  Finally, $\fK_{r,s} = Fix(\beta) = Fix(\beta_{1}) \times Fix(\beta_{2}) \approx Fix(\tau_{1}) \times Fix(\tau_{2}) = Fix(\tau)$.
\end{proof}

\section{Computation of $\fH_{r,s}$}

\noindent We consider only the cases where $r \geq 3$ and $s \geq 3$.  In each case $r+s = 8k+\alpha,~ 0 \leq \alpha \leq 7$ and $s \equiv \beta~(mod~4),~ 0 \leq \beta \leq 3$ we compute the dimensions of $\fK_{r,s}^{\prime}$ and $\fZ(\fK_{r,s}^{\prime})$ from Propositions 2.13 and 4.2, and Corollary 6.3 when $r+s \equiv~1~(mod~4)$.  By Propositions 7.2 and 8.3 these must equal the dimensions of $Fix(\tau)$ and $\fZ(Fix(\tau))$, where $\tau$ is the appropriate involutive automorphism of a compact Lie algebra $\fU$.  In each case only one automorphism $\tau$ in the Cartan list ([He], pp. 451-455) survives this comparison.
\newline

\noindent $\mathbf{Case~1 : r+s = 8k}$.

	a)  Note that $\fG_{r,s} = \fH_{r,s}$ by Propositions 1.6 and 1.7 and $\fK_{r,s} = \fK^{\prime}_{r,s}$ by the definition $\fK^{\prime}_{r,s} = \fK_{r,s} \cap \fH_{r,s}$.  From 1) of Proposition 2.13 we obtain dim $\fK_{r,s} = 2^{8k-2} - 2^{4k}~ 2^{-\frac{3}{2}}\{2^{-\frac{1}{2}} + cos(\frac{(2s-1) \pi}{4}) \}$.   In particular
	
\hspace{.2in} i) If $s \equiv 0~mod~4 $ or $s \equiv 1~mod~4$, then dim $\fK^{\prime}_{r,s} = 2^{8k-2} - 2^{4k-1}$ 

\hspace{.2in} ii) If $s \equiv 2~mod~4 $ or $s \equiv 3~mod~4$, then dim$\fK^{\prime}_{r,s}  = 2^{8k-2}$
\newline

	b)  From Propositions 4.1 and 4.2 we obtain 
	
\hspace{.2in} i) If $s \equiv 0~mod~4 $ or $s \equiv 1~mod~4$, then dim $\fK^{\prime}_{r,s}  = 0$.

\hspace{.2in} ii) If $s \equiv 2~mod~4 $ or $s \equiv 3~mod~4$, then dim $\fZ(\fK^{\prime}_{r,s} ) = 1$.
\newline

	c)  By Corollary 5.5  $\fG_{r,s}^{\bc} = \fU^{\bc}$, where $\fU = \fs \fo(2^{4k},\br)$.  By the discussion in section 7, $\fG_{r,s}$ is induced by an involutive automorphism $\tau$ of $\fU$.  See [He, pp. 451-455] for a precise statement.  By the discussion in [He] there are two conjugacy classes for $\tau$.
	
\hspace{.2in}  i)  BD I \hspace{.1in} 	$\fU= \fs \fo (p+q, \br)$, where $p+q = 2^{4k}$.

\noindent  Here $\fK_{r,s} \approx Fix(\tau) \approx \fs \fo(p,\br) \times  \fs \fo(q,\br)$.  We compute dim $\fK_{r,s} = \frac{1}{2}(p^{2} + q^{2}) - \frac{1}{2}(p+q) = (p-2^{4k-1})^{2} + 2^{8k-2} - 2^{4k-1}$.   

\hspace{.2in}  ii)  D III	 \hspace{.1in} $\fU= \fs \fo(2n,\br)$, where $n = 2^{4k-1}$.

\noindent  Here $\fK_{r,s} \approx Fix(\tau) \approx \fu (2^{4k-1})$ and dim $\fK_{r,s} = 2^{8k-2}$.
\newline

\noindent $\mathbf{Conclusion}$

\hspace{.2in} i) If $s \equiv 0~mod~4 $ or $s \equiv 1~mod~4$, then from  a) i) we have dim $\fK_{r,s} = 2^{8k-2} - 2^{4k-1}$, and this is compatible only with involutions of type BD I where $p = q = 2^{4k-1}$.  By the discussion in [He, pp. 451-455] we conclude that $\fH_{r,s} = \fs \fo(2^{4k-1}, 2^{4k-1})$.

\hspace{.2in} ii) If $s \equiv 2~mod~4 $ or $s \equiv 3 ~mod~4$, then from  a) ii) we have dim $\fK_{r,s} = 2^{8k-2}$, which is compatible only with involutions of type D III.  By the discussion in [He] we conclude that $\fH_{r,s} = \fs \fo^{*}(2^{4k})$. 
\newline

\noindent $\mathbf{Case~2 : r+s = 8k+1}$.

\noindent  Since $r+s \equiv 1~mod~4$ we see from Propositions 1.6 and 1.7 that $\fH_{r,s}$ is a codimension 1 ideal in $\fG_{r,s}$.  Moreover, from Proposition 6.1 we see that $dim~\fK_{r,s} = dim~\fK^{\prime}_{r,s}$ if $s = |\omega^{+}|$ is odd and $dim~\fK_{r,s} - 1 = dim~\fK^{\prime}_{r,s}$ if $s = |\omega^{+}|$ is even.

	a)  From Proposition 2.13 we obtain dim $\fK_{r,s} = 2^{8k-1} - 2^{4k-1}cos \frac{(s-1)\pi}{2}$.   From the discussion above we obtain
	
\hspace{.2in} i) If $s \equiv 0~mod~4 $ or $s \equiv 2~mod~4$, then dim $\fK^{\prime}_{r,s} = 2^{8k-1} - 1$ 

\hspace{.2in} ii) If $s \equiv 1~mod~4 $, then dim $\fK^{\prime}_{r,s} = 2^{8k-1} - 2^{4k-1}$

\hspace{.2in} iii) If $s \equiv 3~mod~4 $, then dim $\fK^{\prime}_{r,s} = 2^{8k-1} + 2^{4k-1}$
\newline

	b)  From Propositions 4.1 and 4.2 and Corollary 6.3  we obtain 
	
\hspace{.2in} i) If $s \equiv 0~mod~4 $ or $s \equiv 2~mod~4$, then dim $\fZ(\fK^{\prime}_{r,s}) = 1$.

\hspace{.2in} ii) If $s \equiv 1~mod~4 $ or $s \equiv 3~mod~4$, then dim $\fZ(\fK^{\prime}_{r,s}) = 0$.
\newline

	c)  By Propositions 1.6 and 1.7 and Corollary 5.5 we obtain $\fH_{r,s}^{\bc} = \fU^{\bc}$, where $\fU = \fs \fu(2^{4k})$.  Hence $\fH_{r,s}$ is a real form of $\fU^{\bc}$ and it is induced by an involutive automorphism $\tau$ of  $\fU$.  By the discussion in [He, pp. 451-455] there are three conjugacy classes for $\tau$ :
		
\hspace{.2in}  i)  A I \hspace{.1in} 	$\fU = \fs \fu (2^{4k})$

\noindent  Here $\fK_{r,s} \approx Fix(\tau) \approx \fs \fo(2^{4k})$.  We compute dim $\fK_{r,s} = 2^{8k-1} - 2^{4k-1}$.   

\hspace{.2in}  ii)  A II	 \hspace{.1in} $\fU= \fs \fu(2n)$, where $n = 2^{4k-1}$.

\noindent  Here $\fK_{r,s} \approx Fix(\tau) \approx \fs \fp(2^{4k-1})$.  We compute dim $\fK_{r,s} = 2^{8k-1} + 2^{4k-1}$

\hspace{.2in}  iii)  A III	 \hspace{.1in} $\fU= \fs \fu(p+q)$, where $p+q = 2^{4k}$.

\noindent  Here $\fK_{r,s} \approx Fix(\tau) \approx \{\left(\begin{array}{ccc}A & 0 \\ 0 & B\\ \end{array} \right) : A \in u(p), B \in u(q) ; trace~A + trace~B = 0 \}$. We compute $dim~\fK_{r,s} = p^{2} + q^{2} - 1 = 2(p-2^{4k-1})^{2} + 2^{8k-1} - 1$.
\newline

\noindent $\mathbf{Conclusion}$

\hspace{.2in} i) If $s \equiv 0~mod~4 $ or $s \equiv 2~mod~4$, then from  a) i) we have dim $\fK^{\prime}_{r,s} = 2^{8k-1} - 1$, and this is compatible only with involutions of type A III,  where $p = q = 2^{4k-1}$.  By the discussion in [He] we see that $\fH_{r,s} = \fs \fu(2^{4k-1}, 2^{4k-1})$.

\hspace{.2in} ii) If $s \equiv 1~mod~4 $, then from  a) ii) we have dim $\fK^{\prime}_{r,s} = 2^{8k-1} - 2^{4k-1}$, which is compatible only with involutions of type A I.  By the discussion in [He] we conclude that $\fH_{r,s} = \fs \ell (2^{4k},\br)$. 

\hspace{.2in} iii) If $s \equiv 3~mod~4 $, then from  a) iii) we have dim $\fK^{\prime}_{r,s} = 2^{8k-1} + 2^{4k-1}$, which is compatible only with involutions of type A II.  By the discussion in [He] we see that $\fH_{r,s} = \fs \fu^{*} (2^{4k})$. 
\newline

\noindent $\mathbf{Case~3 : r+s = 8k+2}$.

\noindent By Propositions 1.6 and 1.7 we see that $\fH_{r,s} = \fG_{r,s}$ and hence $\fK^{\prime}_{r,s} = \fK_{r,s}$.

	a)  From Proposition 2.13 we obtain dim $\fK^{\prime}_{r,s} = 2^{8k} + 2^{4k-1}  - 2^{4k}~2^{-\frac{1}{2}}cos \frac{(2s-3)\pi}{4}$.   From the discussion above we obtain
	
\hspace{.2in} i) If $s \equiv 0~mod~4 $ or $s \equiv 3~mod~4$, then dim $\fK^{\prime}_{r,s} = 2^{8k} + 2^{4k}$ 

\hspace{.2in} ii) If $s \equiv 1~mod~4 $ or $s \equiv 2~mod~4 $ then dim $\fK^{\prime}_{r,s} = 2^{8k}$
\newline

	b)  From Propositions 4.1 and 4.2 we obtain 
	
\hspace{.2in} i) If $s \equiv 0~mod~4 $ or $s \equiv 3~mod~4$, then dim $\fZ(\fK^{\prime}_{r,s}) = 0$.

\hspace{.2in} ii) If $s \equiv 1~mod~4 $ or $s \equiv 2~mod~4$, then dim $\fZ(\fK^{\prime}_{r,s}) = 1$.
\newline

	c)  By Corollary 5.5  $\fH_{r,s}^{\bc}  = \fG_{r,s}^{\bc} =\fU^{\bc} $, where $\fU = \fs \fp(2^{4k})$.  Hence $\fH_{r,s}$ is a real form of $\fU^{\bc}$ and it is induced by an involutive automorphism $\tau$ of  $\fU$.  By the discussion in [He] there are two conjugacy classes for $\tau$ :
		
\hspace{.2in}  i)  C I \hspace{.1in} 	$\fU = \fs \fp (2^{4k})$

\noindent  Here $\fK_{r,s} = \approx Fix(\tau) \approx \ \fu(2^{4k})$.  We compute dim $\fK_{r,s} = 2^{8k}$.   

\hspace{.2in}  ii)  C II	 \hspace{.1in} $\fU= \fs \fp(p+q)$, where $p+q = 2^{4k}$.

\noindent  Here $\fK_{r,s} \approx Fix(\tau) \approx \fs \fp(p) \times \fs \fp(q)$.  We compute dim $\fK_{r,s} = 2p^{2} + p + 2q^{2} + q = 2^{8k} + 2^{4k} + 4(p - 2^{4k-1})^{2}$.
\newline

\noindent $\mathbf{Conclusion}$

\hspace{.2in} i) If $s \equiv 0~mod~4 $ or $s \equiv 3~mod~4$, then from  a) i) we have dim $\fK^{\prime}_{r,s} = 2^{8k} + 2^{4k}$, and this is compatible only with involutions of type C II,  where $p = q = 2^{4k-1}$.  By the discussion in [He] we see that $\fH_{r,s} = \fs \fp(2^{4k-1}, 2^{4k-1})$.

\hspace{.2in} ii) If $s \equiv 1~mod~4 $ or $s \equiv 2~mod~4 $, then from  a) ii) we have dim $\fK^{\prime}_{r,s} = 2^{8k}$, which is compatible only with involutions of type C I.  By the discussion in [He] we see that $\fH_{r,s} = \fs \fp (2^{4k},\br)$. 
\newline

\noindent $\mathbf{Case~4 : r+s = 8k+3}$.

\noindent By Propositions 1.6 and 1.7 we see that $\fH_{r,s} = \fG_{r,s}$ and hence $\fK^{\prime}_{r,s} = \fK_{r,s}$.  By Proposition 1.6, $\omega$ lies in the center of $C\ell(r,s)$, but by Proposition 1.5, $\omega$ does not lie in $\fG_{r,s}$.
\newline

\noindent This case and case 8, where $r+s = 8k+7$, are more complicated than the others and need the results of section 8.

	a)  From Proposition 2.13 we obtain $dim ~\fK^{\prime}_{r,s} = dim~ \fK_{r,s} = 2^{8k+1} + 2^{4k}  - 2^{4k}~cos \frac{(s-2)\pi}{2}$.   From the discussion above we obtain
	
\hspace{.2in} i) If $s \equiv 0~mod~4 $,  then $dim~\fK^{\prime}_{r,s} = 2^{8k+1} + 2^{4k+1}$ 

\hspace{.2in} ii) If $s \equiv 1~mod~4 $, or $s \equiv 3~mod~4 $ then $dim~\fK^{\prime}_{r,s} = 2^{8k+1} + 2^{4k}$

\hspace{.2in} iii) If $s \equiv 2~mod~4 $,  then $dim~\fK^{\prime}_{r,s} = 2^{8k+1}$ 
\newline

\noindent We treat the cases s odd and s even separately.  The next result determines $\fH_{r,s}$ in the case that s is odd.

\begin{proposition}  If s is odd, then $\fH_{r,s} \approx \fs \fp(2^{4k}, \bc)_{\br}$.  
\end{proposition}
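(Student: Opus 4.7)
The plan is to identify $\fH_{r,s}$ by matching it against the complexification data from Corollary 5.5, exploiting the fact (from Proposition 8.1 part 2) that $\fH_{r,s}=\fG_{r,s}$ is already the realification of a complex simple Lie algebra when $s$ is odd.

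First, since $r+s \equiv 3 \pmod{4}$, Propositions 1.6 and 1.7 give $\fH_{r,s}=\fG_{r,s}$. Because $s$ is odd (so $r$ is even and $\omega^2=-1$), part 2 of Proposition 8.1 applies: the map $\xi \mapsto \omega \xi$ equips $\fG_{r,s}$ with the structure of a complex Lie algebra, which is simple as a complex Lie algebra by the argument following Lemma 8.2. Let $\fG_1$ denote this complex simple Lie algebra, so that $\fH_{r,s}=(\fG_1)_{\br}$ as real Lie algebras.

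Next I would apply Lemma 8.2 to this $\fG_1$, obtaining the isomorphism of complex Lie algebras
\[
\fH_{r,s}^{\bc} \;=\; (\fG_1)_{\br}^{\bc} \;\approx\; \fG_1 \oplus \fG_1.
\]
On the other hand, Corollary 5.5 in the row $r+s = 8k+3$ gives
\[
\fH_{r,s}^{\bc} \;\approx\; \bigl(\fs\fp(2^{4k}) \times \fs\fp(2^{4k})\bigr)\otimes \bc \;\approx\; \fs\fp(2^{4k},\bc) \oplus \fs\fp(2^{4k},\bc),
\]
where I use that the complexification of the compact symplectic algebra $\fs\fp(n)$ is $\fs\fp(n,\bc)$.

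Finally, I invoke the uniqueness (up to order) of the decomposition of a complex semisimple Lie algebra into simple ideals (as cited in the proof of Proposition 8.1). Comparing the two decompositions of $\fH_{r,s}^{\bc}$ forces $\fG_1 \approx \fs\fp(2^{4k},\bc)$ as complex Lie algebras, and therefore $\fH_{r,s}=(\fG_1)_{\br} \approx \fs\fp(2^{4k},\bc)_{\br}$, completing the proof. The only mildly delicate step is invoking the uniqueness of simple-ideal decomposition to match the two displays above; everything else is bookkeeping with results already established in sections 5 and 8.
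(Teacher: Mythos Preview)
Your proposal is correct and follows essentially the same argument as the paper: both use that $\omega^{2}=-1$ gives $\fH_{r,s}=\fG_{r,s}$ a complex simple structure (Proposition 8.1 part 2), apply Lemma 8.2 to get $\fH_{r,s}^{\bc}\approx \fG_{1}\oplus\fG_{1}$, compare with Corollary 5.5 to obtain $\fH_{r,s}^{\bc}\approx \fs\fp(2^{4k})^{\bc}\oplus\fs\fp(2^{4k})^{\bc}$, and then invoke uniqueness of the simple-ideal decomposition to conclude $\fG_{1}\approx \fs\fp(2^{4k},\bc)$.
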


\begin{proof} If s is odd, then r is even. By remark 2) following Proposition 8.1 we see that $\omega^{2} = -1$, and $\omega$ defines a complex structure on $\fH_{r,s}$ such that $\fH = \fH_{r,s}$ is a complex simple Lie algebra.  By Lemma 8.2 we know that $\fH_{r,s}^{\bc} = \fH \oplus \fH$.  By Corollary 5.5 we also know that $\fH_{r,s}^{\bc} = \fG_{r,s}^{\bc} = \fU^{\bc} \oplus \fU^{\bc}$, where $\fU = \fs \fp(2^{4k})$.  By the uniqueness of the decomposition of $\fH_{r,s}^{\bc}$ into a direct sum of complex simple ideals it follows that $\fH = \fU^{\bc}$.  Hence $\fH_{r,s} = \fH_{\br} = (\fU^{\bc})_{\br} = \fs \fp(2^{4k}, \bc)_{\br}$. 
\end{proof}

\noindent  Next we treat the case that s is even.

	b)  From Propositions 4.1 and 4.2  we obtain 
	
\hspace{.2in} i) If $s \equiv 0~mod~4 $, then dim $\fZ(\fK^{\prime}_{r,s}) = 0$.

\hspace{.2in} ii) If $s \equiv 2~mod~4 $, then dim $\fZ(\fK^{\prime}_{r,s}) = 2$.
\newline

\noindent  By part 1) of Proposition 8.1 and Corollary 5.5 we obtain

\begin{lemma}  Let s be even.  Then $\fH_{r,s} = \fH_{1} \oplus \fH_{2}$, where $\fH_{1}^{\bc} = \fH_{2}^{\bc} = \fs \fp(2^{4k})^{\bc}$.
\end{lemma}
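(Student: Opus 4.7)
The plan is to read the lemma off almost directly from part 1) of Proposition 8.1 together with Corollary 5.5, since under the standing hypotheses $r+s = 8k+3$ and $s$ even, we are precisely in the case where part 1) of Proposition 8.1 applies. Since $r+s \equiv 3 \pmod 4$, Propositions 1.6 and 1.7 give $\fZ(\fG_{r,s}) = \{0\}$ and hence $\fH_{r,s} = \fG_{r,s}$, so it is enough to decompose $\fG_{r,s}$.

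Because $s$ is even (and consequently $r$ is odd), part 1) of Proposition 8.1 supplies simple ideals $\fG_{1}, \fG_{2}$ of $\fG_{r,s}$ together with a compact simple Lie algebra $\fU$ such that $\fG_{r,s} = \fG_{1} \oplus \fG_{2}$ and $\fG_{1}^{\bc} = \fG_{2}^{\bc} = \fU^{\bc}$. Setting $\fH_{\alpha} = \fG_{\alpha}$ for $\alpha = 1,2$ then yields the desired vector space and Lie algebra direct sum decomposition $\fH_{r,s} = \fH_{1} \oplus \fH_{2}$ into ideals whose complexifications are the complexification of a single compact simple Lie algebra $\fU$.

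It then remains only to identify $\fU^{\bc}$ as $\fs\fp(2^{4k})^{\bc}$. By Corollary 5.5, when $r+s = 8k+3$ we have the isomorphism $\fH_{r,s}^{\bc} \approx (\fs\fp(2^{4k}) \times \fs\fp(2^{4k})) \otimes \bc \approx \fs\fp(2^{4k})^{\bc} \oplus \fs\fp(2^{4k})^{\bc}$. On the other hand the decomposition obtained above gives $\fH_{r,s}^{\bc} = \fH_{1}^{\bc} \oplus \fH_{2}^{\bc} = \fU^{\bc} \oplus \fU^{\bc}$, which exhibits $\fH_{r,s}^{\bc}$ as a sum of two complex simple ideals. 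By the uniqueness (up to order) of the decomposition of a complex semisimple Lie algebra into simple ideals, one concludes $\fU^{\bc} \approx \fs\fp(2^{4k})^{\bc}$, and therefore $\fH_{1}^{\bc} = \fH_{2}^{\bc} = \fs\fp(2^{4k})^{\bc}$.

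The main potential obstacle is essentially absent here: the substantive content has already been supplied by Proposition 8.1 (via the idempotent decomposition $\pi^{\pm} = \tfrac{1}{2}(1 \pm \omega)$, which is available precisely because $r$ is odd forces $\omega^{2} = 1$) and by Corollary 5.5. The only point requiring minor care is justifying that the uniqueness-of-simple-decomposition argument applies, for which one must record that $\fU^{\bc}$ is indeed complex simple; this is part of the statement of Proposition 8.1(1), so no further work is needed.
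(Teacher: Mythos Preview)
Your proof is correct and follows essentially the same approach as the paper, which simply states that the lemma follows from part 1) of Proposition 8.1 together with Corollary 5.5; you have just spelled out the details of that deduction.
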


	c)    By the previous lemma $\fH_{1}$ and $\fH_{2}$  are real forms of $\fU^{\bc} = \fs \fp(2^{4k})^{\bc}$ and $\fH_{i}$ is determined by an automorphism $\tau_{i}$ of $\fU$ for $i = 1,2$.   By the discussion in [He] there are two conjugacy classes for $\tau$ :
		
\hspace{.2in}  i)  C I \hspace{.1in} 	$\fU = \fs \fp (2^{4k})$

\noindent  Here $\fK_{r,s} \approx Fix(\tau) \approx \ \fu(2^{4k})$.  We compute dim $\fK_{r,s} = 2^{8k}$ and dim $\fZ(\fK_{r,s}) = 1$. 

\hspace{.2in}  ii)  C II	 \hspace{.1in} $\fU= \fs \fp(p+q)$, where $p+q = 2^{4k}$.

\noindent  Here $\fK_{r,s} \approx Fix(\tau) \approx \fs \fp(p) \times \fs \fp(q)$.  We compute dim $\fK_{r,s} = 2p^{2} + p + 2q^{2} + q = 2^{8k} + 2^{4k} + 4(p - 2^{4k-1})^{2}$ and dim $\fZ(\fK_{r,s}) = 0$.
\newline

\noindent First we consider the case that $s \equiv 0~(mod~4)$.  By the discussion above in b) and Proposition 8.3 we obtain $0 = dim~\fZ(\fK_{r,s}) = dim~\fZ(Fix(\tau_{1})) + dim~\fZ(Fix(\tau_{2}))$, and it follows that $0 = dim~\fZ(Fix(\tau_{1})) = dim~\fZ(Fix(\tau_{2}))$.  Hence from the discussion in c) we conclude that $\tau_{1}$ and $\tau_{2}$ are both of type C II .  From a) above we have $dim~\fK_{r,s} = 2^{8k+1} + 2^{4k+1}$ and from c) we conclude that $dim~Fix(\tau_{1}) + dim~(Fix(\tau_{2}) = 2^{8k+1} + 2^{4k+1} + 4(p_{1} - 2^{4k-1})^{2} + 4(p_{2} - 2^{4k-1})^{2}$, where $p_{1} + q_{1} = p_{2} + q_{2} = 2^{4k}$.  This is only possible if $p_{\alpha} = q_{\alpha} = 2^{4k-1}$ for $\alpha = 1,2$.  We conclude that $\fH_{1} = \fH_{2} = \fs \fp(2^{4k-1}, 2^{4k_1})$, and $\fH_{r,s} = \fH_{1} \oplus \fH_{2} = \fs \fp(2^{4k-1}, 2^{4k-1}) \oplus \fs \fp(2^{4k-1}, 2^{4k-1})$.
\newline

\noindent  Finally we consider the case that $s \equiv 2~(mod~4)$.  By b) and Proposition 8.3 we have $2 = dim \fZ(\fK^{\prime}_{r,s}) = dim~\fZ(Fix(\tau_{1})) + dim~\fZ(Fix(\tau_{2}))$.  By c) we conclude that both $\tau_{1}$ and $\tau_{2}$ are involutions of type C I.  From the description of type C I in [He] it follows that $\fH_{1} = \fH_{2} = \fs \fp(2^{4k}, \br)$.  Hence $\fH_{r,s} = \fH_{1} \oplus \fH_{2} =  \fs \fp(2^{4k}, \br) \oplus  \fs \fp(2^{4k}, \br)$.
\newline

\noindent $\mathbf{Conclusion}$

\hspace{.2in} i) If s is odd, then $\fG_{r,s} \approx \fs \fp(2^{4k}, \bc)_{\br}$. 

\hspace{.2in} ii)  If $s \equiv 0~(mod~4)$, then $\fG_{r,s} \approx \fs \fp(2^{4k-1}, 2^{4k-1}) \oplus \fs \fp(2^{4k-1}, 2^{4k-1})$.

\hspace{.2in} iii)  If $s \equiv 2~(mod~4)$, then $\fG_{r,s} \approx \fs \fp(2^{4k}, \br) \oplus  \fs \fp(2^{4k}, \br)$.
\newline

\noindent $\mathbf{Case~5 : r+s = 8k+4}$.

\noindent By Propositions 1.6 and 1.7 we see that $\fH_{r,s} = \fG_{r,s}$ and hence $\fK^{\prime}_{r,s} = \fK_{r,s}$.

	a)  From Proposition 2.13 we obtain dim $\fK_{r,s} = 2^{8k+2} + 2^{4k}  - 2^{4k}~2^{\frac{1}{2}}cos \frac{(2s-5)\pi}{4}$.  In particular
	
\hspace{.2in} i) If $s \equiv 0~mod~4 $ or $s \equiv 1~mod~4$, then dim $\fK^{\prime}_{r,s} = 2^{8k+2} + 2^{4k+1}$ 

\hspace{.2in} ii) If $s \equiv 2~mod~4 $, or $s \equiv 3~mod~4 $ then dim $\fK^{\prime}_{r,s} = 2^{8k+2}$
\newline

	b)  From Propositions 4.1 and 4.2 we obtain 
	
\hspace{.2in} i) If $s \equiv 0~mod~4 $ or $s \equiv 1~mod~4$, then dim $\fZ(\fK^{\prime}_{r,s}) = 0$.

\hspace{.2in} ii) If $s \equiv 2~mod~4 $ or $s \equiv 3~mod~4$, then dim $\fZ(\fK^{\prime}_{r,s}) = 1$.
\newline

	c)  By Corollary 5.5,  $\fH_{r,s}^{\bc}  = \fG_{r,s}^{\bc} =\fU^{\bc} $, where $\fU = \fs \fp(2^{4k+1})$.  Hence $\fH_{r,s}$ is a real form of $\fU^{\bc}$ and it is induced by an involutive automorphism $\tau$ of  $\fU$.  By the discussion in [He] there are two conjugacy classes for $\tau$ :
		
\hspace{.2in}  i)  C I \hspace{.1in} 	$\fU = \fs \fp (2^{4k+1})$

\noindent  Here $\fK_{r,s}  \approx Fix(\tau) \approx  \fu(2^{4k+1})$.  We compute dim $\fK_{r,s} = 2^{8k+2}$ and $dim~\fZ(\fK_{r,s}) = 1$   

\hspace{.2in}  ii)  C II	 \hspace{.1in} $\fU= \fs \fp(p+q)$, where $p+q = 2^{4k+1}$.

\noindent  Here $\fK_{r,s}  \approx Fix(\tau) \approx \fs \fp(p) \times \fs \fp(q)$.  We compute dim $\fK_{r,s} = 2p^{2} + p + 2q^{2} + q = 2^{8k+2} + 2^{4k+1} + 4(p - 2^{4k})^{2}$ and  $dim~\fZ(\fK_{r,s}) = 0$.
\newline

\noindent $\mathbf{Conclusion}$

\hspace{.2in} i) If $s \equiv 0~mod~4 $ or $s \equiv 1~mod~4$, then from  b) i) we have dim $\fZ(\fK^{\prime}_{r,s}) = 0$.  This is compatible only with involutions of type C II.  By ai) we have $2^{8k+2} + 2^{4k+1} = dim~\fK_{r,s}^{\prime} = 2^{8k+2} + 2^{4k+1} + 4(p - 2^{4k})^{2}$.  This is only possible if $p = q = 2^{4k}$.  Hence $\fH_{r,s} = \fs \fp(2^{4k}, 2^{4k})$.

\hspace{.2in} ii) If $s \equiv 2~mod~4 $ or $s \equiv 3~mod~4 $, then from b) we have $dim~\fZ(\fK^{\prime}_{r,s}) = 1$.   This is only compatible with involutions of type CI, and we conclude that $\fH_{r,s} = \fs \fp (2^{4k+1},\br)$. 
\newline

\noindent $\mathbf{Case~6 : r+s = 8k+5}$.

\noindent  Since $r+s \equiv 1~mod~4$ we see from Propositions 1.6 and 1.7  that $\fH_{r,s}$ is a codimension 1 ideal in $\fG_{r,s}$.  Moreover, from Proposition 6.1 we see that $dim~\fK_{r,s} = dim~\fK^{\prime}_{r,s}$ if $s = |\omega^{+}|$ is odd and $dim~\fK_{r,s} - 1 = dim~\fK^{\prime}_{r,s}$ if $s = |\omega^{+}|$ is even.

	a)  From Proposition 2.13  we obtain dim $\fK_{r,s} = 2^{8k+3} - 2^{4k+1}cos \frac{(s-3)\pi}{2}$.   From the discussion above we obtain
	
\hspace{.2in} i) If $s \equiv 0~mod~4 $ or $s \equiv 2~mod~4$, then dim $\fK^{\prime}_{r,s} = 2^{8k+3} -1$ 

\hspace{.2in} ii) If $s \equiv 1~mod~4 $, then dim $\fK^{\prime}_{r,s} = 2^{8k+3} + 2^{4k+1}$

\hspace{.2in} iii) If $s \equiv 3~mod~4 $, then dim $\fK^{\prime}_{r,s} = 2^{8k+3} - 2^{4k+1}$
\newline

	b)  From Propositions 4.1 and 4.2  we obtain 
	
\hspace{.2in} i) If $s \equiv 0~mod~4 $ or $s \equiv 2~mod~4$, then $\omega \in \fK_{r,s}$ and $dim~\fZ(\fK_{r,s}) = 2$.  Hence $dim~\fZ(\fK^{\prime}_{r,s}) = dim~\fZ(\fK_{r,s}) -1 = 1$ by 1) of Corollary 6.3.

\hspace{.2in} ii) If $s \equiv 1~mod~4 $ or $s \equiv 3~mod~4$, then $\omega \notin \fK_{r,s}$ and $dim~\fZ(\fK_{r,s}) = 0$.  Hence dim$\fZ(\fK^{\prime}_{r,s}) = dim~\fZ(\fK_{r,s}) = 0$ by 2) of Corollary 6.3.
\newline

	c)  By Corollary 5.5 $\fH_{r,s}^{\bc}  = \fU^{\bc}$, where $\fU = \fs \fu(2^{4k+2})$.  Hence $\fH_{r,s}$ is a real form of $\fU^{\bc}$ and it is induced by an involutive automorphism $\tau$ of  $\fU$.  By the discussion in [He] there are three conjugacy classes for $\tau$ :
		
\hspace{.2in}  i)  A I \hspace{.1in} 	$\fU = \fs \fu (2^{4k+2})$

\noindent  Here $\fK_{r,s} \approx Fix(\tau) \approx \fs \fo(2^{4k+2})$.  We compute dim $\fK_{r,s} = 2^{8k+3} - 2^{4k+1}$ and dim $\fZ(\fK_{r,s}) = 0$. 

\hspace{.2in}  ii)  A II	 \hspace{.1in} $\fU= \fs \fu(2n)$, where $n = 2^{4k+1}$.

\noindent  Here $\fK_{r,s} \approx Fix(\tau) \approx \fs \fp(2^{4k+1})$.  We compute dim $\fK_{r,s} = 2^{8k+3} + 2^{4k+1}$ and dim $\fZ(\fK_{r,s}) = 0$.

\hspace{.2in}  iii)  A III	 \hspace{.1in} $\fU= \fs \fu(p+q)$, where $p+q = 2^{4k+2}$.

\noindent  Here $\fK_{r,s} \approx Fix(\tau) \approx \{\left(\begin{array}{ccc}A & 0 \\ 0 & B\\ \end{array} \right) : A \in \fu(p), B \in \fu(q) ; trace~A + trace~B = 0 \}$.

\noindent We compute $dim~\fK_{r,s} = p^{2} + q^{2} - 1 = 2(p-2^{4k+1})^{2} + 2^{8k+3} - 1$.
\newline

\noindent $\mathbf{Conclusion}$

\hspace{.2in} i) If $s \equiv 0~mod~4 $ or $s \equiv 2~mod~4$, then from  a) i) we have dim $\fK^{\prime}_{r,s} = 2^{8k+3} - 1$.  This is compatible only with involutions of type A III,  where $p = q = 2^{4k+1}$.  By the discussion in [He] we conclude that $\fH_{r,s} = \fs \fu(2^{4k+1}, 2^{4k+1})$.

\hspace{.2in} ii) If $s \equiv 1~mod~4 $, then from  a) ii) we have dim $\fK^{\prime}_{r,s} = 2^{8k+3} + 2^{4k+1}$.  By inspection type A I is incompatible and A III is incompatible since $dim~\fK_{r,s}^{\prime}$ is odd in this case.   Hence the involution $\tau$ must be of type A II.  From the discussion in [He] we conclude that $\fH_{r,s} = \fs \fu^{*}(2^{4k+2})$.

\hspace{.2in} iii) If $s \equiv 3~mod~4 $, then from  a) iii) we have dim $\fK^{\prime}_{r,s} = 2^{8k+3} - 2^{4k+1}$.  By inspection type A II is incompatible, and A III is incompatible since $dim~\fK_{r,s}^{\prime}$ is odd in this case.   Hence $\tau$ is of type A I, and we obtain $\fH_{r,s} = \fs \ell(2^{4k+2}, \br)$.
\newline

\noindent $\mathbf{Case~7 : r+s = 8k+6}$.

\noindent By Propositions 1.6 and 1.7  we see that $\fH_{r,s} = \fG_{r,s}$ and hence $\fK^{\prime}_{r,s} = \fK_{r,s}$.

	a)  From Proposition 2.13 we obtain dim $\fK_{r,s} = 2^{8k+4} - 2^{4k+1}  - 2^{4k}~2^{\frac{3}{2}}cos \frac{(2s-7)\pi}{4}$.   From the discussion above we obtain
	
\hspace{.2in} i) If $s \equiv 0~mod~4 $ or $s \equiv 3~mod~4$, then dim $\fK^{\prime}_{r,s} = 2^{8k+4} - 2^{4k+2}$ 

\hspace{.2in} ii) If $s \equiv 1~mod~4 $ or $s \equiv 2~mod~4 $, then dim $\fK^{\prime}_{r,s} = 2^{8k+4}$
\newline

	b)  From Propositions 4.1 and 4.2  we obtain 
	
\hspace{.2in} i) If $s \equiv 0~mod~4 $ or $s \equiv 3~mod~4$, then dim $\fZ(\fK^{\prime}_{r,s}) = 0$.

\hspace{.2in} ii) If $s \equiv 1~mod~4 $ or $s \equiv 2~mod~4$, then dim $\fZ(\fK^{\prime}_{r,s}) = 1$.
\newline

	c)  By Corollary 5.5, $\fH_{r,s}^{\bc}  = \fG_{r,s}^{\bc} =\fU^{\bc} $, where $\fU = \fs \fo(2^{4k+3}, \br)$.  Hence $\fH_{r,s}$ is a real form of $\fU^{\bc}$ and it is induced by an involutive automorphism $\tau$ of  $\fU$.  By the discussion in [He] there are two conjugacy classes for $\tau$ :
		
\hspace{.2in}  i)  BD I \hspace{.1in} 	$\fU = \fs \fo (2^{4k+3})$

\noindent  Here $\fK_{r,s} \approx Fix(\tau) \approx  \fs \fo(p) \times \fs \fo(q)$, where $p+q = 2^{4k+3}$.  By inspection dim $\fZ(\fK_{r,s}) = 0$. We compute dim $\fK_{r,s} = \frac{1}{2}(p^{2} + q^{2}) - \frac{1}{2}(p+q)) = 2^{8k+4} - 2^{4k+2} + (p - 2^{4k+2})^{2}$.   

\hspace{.2in}  ii)  D III	 \hspace{.1in} $\fU= \fs \fo(2n)$, where $n = 2^{4k+2}$.

\noindent  Here $\fK_{r,s} = Fix(\tau) \approx \fu(2^{4k+2})$.  By inspection dim $\fZ(\fK_{r,s}) = 1$, and we compute dim $\fK_{r,s} = 2^{8k+4}$.
\newline

\noindent $\mathbf{Conclusion}$

\hspace{.2in} i) If $s \equiv 0~mod~4 $ or $s \equiv 3~mod~4$, then from  a) i) we have dim $\fK^{\prime}_{r,s} = 2^{8k+4} - 2^{4k+2}$. This is compatible only with involutions of type BD I,  where $p = q = 2^{4k+2}$.  By the discussion in [He] we conclude that $\fH_{r,s} = \fs \fo(2^{4k+2}, 2^{4k+2})$.

\hspace{.2in} ii) If $s \equiv 1~mod~4 $, or $s \equiv 2~mod~4 $, then from  a) ii) we have dim $\fK^{\prime}_{r,s} = 2^{8k+4}$ and from b ii) we have dim $\fK^{\prime}_{r,s} = 1$, which is compatible only with involutions of type D III.  By the discussion in [He] we conclude that $\fH_{r,s} = \fs \fo^{*} (2^{4k+3})$. 
\newline

\noindent $\mathbf{Case~8 : r+s = 8k+7}$.

\noindent By Propositions 1.6 and 1.7  $\fH_{r,s} = \fG_{r,s}$ and hence $\fK^{\prime}_{r,s} = \fK_{r,s}$.  Note that $\omega$ lies in the center of $C\ell(r,s)$ by Proposition 1.6 , but $\omega$ does not lie in $\fG_{r,s}$ by Proposition 1.5.
\newline

	a)  From Proposition 2.13 we obtain $dim ~\fK^{\prime}_{r,s} = dim~ \fK_{r,s} = 2^{8k+5} - 2^{4k+2} (1 +cos \frac{s \pi}{2}$.   From the discussion above we obtain
	
\hspace{.2in} i) If $s \equiv 0~mod~4 $,  then $dim~\fK^{\prime}_{r,s} = 2^{8k+5} - 2^{4k+3}$. 

\hspace{.2in} ii) If $s \equiv 1~mod~4 $ or $s \equiv 3~mod~4 $, then $dim~\fK^{\prime}_{r,s} = 2^{8k+5} - 2^{4k+2}$

\hspace{.2in} i) If $s \equiv 2~mod~4 $,  then $dim~\fK^{\prime}_{r,s} = 2^{8k+5}$ 
\newline

\noindent We treat the cases s odd and s even separately.  The next result determines $\fH_{r,s}$ in the case that s is odd.

\begin{proposition}  If s is odd, then $\fH_{r,s} \approx \fs \fo(2^{4k+3}, \bc)_{\br}$.  
\end{proposition}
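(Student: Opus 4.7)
The plan is to imitate the proof of Proposition 9.1 verbatim, substituting the correct compact form dictated by Corollary 5.5 for the residue class $r+s \equiv 7~(\bmod~8)$. First I would observe that $r+s = 8k+7 \equiv 3~(\bmod~4)$, so by Propositions 1.6 and 1.7 we have $\fH_{r,s} = \fG_{r,s}$, and since $s$ is odd and $r+s$ is odd, $r$ must be even. Invoking Remark 2 following Proposition 8.1, this gives $\omega^{2} = (-1)^{r+1} = -1$, so that left multiplication by $\omega$ equips $\fH_{r,s}$ with a complex structure. By part 2) of Proposition 8.1, the resulting complex Lie algebra $\fH := \fH_{r,s}$ is complex simple.

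Next I would complexify in two different ways and compare. On the one hand, Lemma 8.2 applied to the complex Lie algebra $\fH$ gives
\[
\fH_{r,s}^{\bc} \;\approx\; \fH \oplus \fH.
\]
On the other hand, by Corollary 5.5 applied in the $r+s = 8k+7$ row,
\[
\fH_{r,s}^{\bc} \;=\; \fG_{r,s}^{\bc} \;\approx\; \fU^{\bc} \oplus \fU^{\bc},
\]
where $\fU = \fs\fo(2^{4k+3}, \br)$ is a compact simple Lie algebra. Since the decomposition of a complex semisimple Lie algebra into complex simple ideals is unique up to order, comparing these two decompositions forces $\fH \approx \fU^{\bc}$ as complex Lie algebras. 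Consequently
\[
\fH_{r,s} \;=\; \fH_{\br} \;\approx\; (\fU^{\bc})_{\br} \;=\; \fs\fo(2^{4k+3}, \bc)_{\br},
\]
which is the desired conclusion.

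There is essentially no real obstacle here: the entire argument is a formal transcription of the proof of Proposition 9.1, and all three ingredients (the parity calculation $\omega^{2} = -1$, the complexification lemma for realifications of complex Lie algebras, and the explicit compact form from the table in Corollary 5.5) are already established. The only point that requires minor care is checking the arithmetic for $\omega^{2}$ in the new residue class and reading off the correct entry $\fs\fo(2^{4k+3}, \br)$ from the $8k+7$ row of Corollary 5.5; once those are in hand the uniqueness-of-simple-ideals step closes the proof.
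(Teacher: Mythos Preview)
Your proposal is correct and matches the paper's own proof essentially line for line: the paper likewise notes $r$ is even so $\omega^{2}=-1$, invokes Proposition 8.1(2) and Lemma 8.2 to get $\fH_{r,s}^{\bc}\approx \fH\oplus\fH$, reads off $\fU^{\bc}\oplus\fU^{\bc}$ with $\fU=\fs\fo(2^{4k+3},\br)$ from Corollary 5.5, and concludes by uniqueness of the simple-ideal decomposition. There is nothing to add.
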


\begin{proof} If s is odd, then r is even.  By Remark 2) following Proposition 8.1 it follows that $\omega^{2} = -1$, and $\omega$ defines a complex structure on $\fH_{r,s}$ such that $\fH_{r,s} = \fH$ is a complex simple Lie algebra.  By Lemma 8.2 we know that $\fH_{r,s}^{\bc} = \fH \oplus \fH$.  By Corollary 5.5 we also know that $\fH_{r,s}^{\bc} = \fG_{r,s}^{\bc} = \fU^{\bc} \oplus \fU^{\bc}$, where $\fU = \fs \fo(2^{4k+3}, \br)$.  By the uniqueness of the decomposition of $\fH_{r,s}^{\bc}$ into a direct sum of complex simple ideals it follows that $\fH = \fU^{\bc}$.  Hence $\fH_{r,s} = \fH_{\br} = (\fU^{\bc})_{\br} = \fs \fo(2^{4k+3}, \bc)_{\br}$. 
\end{proof}

\noindent  Next we treat the case that s is even.

	b)  From Propositions 4.1 and 4.2 we obtain 
	
\hspace{.2in} i) If $s \equiv 0~mod~4$, then dim $\fZ(\fK^{\prime}_{r,s}) = 0$.

\hspace{.2in} i) If $s \equiv 2~mod~4 $, then dim $\fZ(\fK^{\prime}_{r,s}) = 2$.
\newline

\noindent  By part 1) of Proposition 8.1 and Corollary 5.5 we obtain

\begin{lemma}  Let s be even.  Then $\fH_{r,s} = \fH_{1} \oplus \fH_{2}$, where $\fH_{1}^{\bc} = \fH_{2}^{\bc} = \fs \fo(2^{4k+3},\bc)$.
\end{lemma}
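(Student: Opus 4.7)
The plan is to combine Proposition 8.1 (part 1) with Corollary 5.5, essentially repeating the argument used for the analogous lemma in Case 4 but with the dimensions appropriate for $r+s = 8k+7$. Since $r+s \equiv 3~(mod~4)$, Propositions 1.6 and 1.7 give $\fZ(\fG_{r,s}) = \{0\}$ and hence $\fH_{r,s} = \fG_{r,s}$, so it suffices to decompose $\fG_{r,s}$ itself.

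First I would invoke part 1) of Proposition 8.1: since $r+s \equiv 3~(mod~4)$ and $s$ is even, there exist simple ideals $\fG_{1}, \fG_{2}$ of $\fG_{r,s}$ and a compact simple real Lie algebra $\fU$ with $\fG_{r,s} = \fG_{1} \oplus \fG_{2}$ and $\fG_{1}^{\bc} = \fG_{2}^{\bc} = \fU^{\bc}$. Setting $\fH_{1} = \fG_{1}$ and $\fH_{2} = \fG_{2}$ then gives the desired direct sum decomposition; the only remaining point is to identify $\fU^{\bc}$.

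To pin down $\fU^{\bc}$, I would apply Corollary 5.5 in the case $r+s = 8k+7$, which yields $\fH_{r,s}^{\bc} = \fG_{r,s}^{\bc} = (\fs\fo(2^{4k+3},\br) \times \fs\fo(2^{4k+3},\br)) \otimes \bc$, a direct sum of two copies of the complex simple Lie algebra $\fs\fo(2^{4k+3},\bc)$. On the other hand, complexifying the decomposition from Proposition 8.1 gives $\fH_{r,s}^{\bc} = \fG_{1}^{\bc} \oplus \fG_{2}^{\bc} = \fU^{\bc} \oplus \fU^{\bc}$, with $\fU^{\bc}$ complex simple. By uniqueness (up to order) of the decomposition of a complex semisimple Lie algebra into simple ideals, we conclude $\fU^{\bc} = \fs\fo(2^{4k+3},\bc)$, hence $\fH_{1}^{\bc} = \fH_{2}^{\bc} = \fs\fo(2^{4k+3},\bc)$, completing the proof.

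The argument is essentially a direct citation of earlier results, so no serious obstacle is expected; the only thing to be careful about is the identification step, where one must ensure that the compact simple $\fU$ provided by Proposition 8.1 really matches the factor appearing in Corollary 5.5, which is where the uniqueness of decomposition into simple complex ideals is used.
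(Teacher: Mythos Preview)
Your proposal is correct and follows the same approach as the paper, which simply cites part 1) of Proposition 8.1 and Corollary 5.5. Your uniqueness-of-simple-ideals argument is already built into the proof of Proposition 8.1 (that is where the $\fU$ from Corollary 5.5 is identified), so your final paragraph is slightly redundant but certainly not wrong.
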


	c)    By the previous lemma $\fH_{1}$ and $\fH_{2}$  are real forms of $\fU^{\bc} = \fs \fo(2^{4k+3}, \bc)$ and $\fH_{i}$ is determined by an automorphism $\tau_{i}$ of $\fU = \fs \fo(2^{4k+3}, \br)$ for $i = 1,2$.   By the discussion in [He] there are two conjugacy classes for $\tau$ :
		
\hspace{.2in}  i)  BD I \hspace{.1in} 	$\fU = \fs \fo (p+q,\br)$, where $p+q = 2^{4k+3}$.

\noindent  Here $\fK_{r,s} \approx Fix(\tau) \approx \fs \fo(p, \br) \times \fs \fo(q,\br))$.  By inspection $\fZ(\fK_{r,s}) = \{0 \}$ and a routine computation shows that  dim $\fK_{r,s} = \frac{1}{2}(p^{2} + q^{2}) - \frac{1}{2}(p+q) =  2^{8k+4} - 2^{4k+2} + (p - 2^{4k+2})^{2}$. 

\hspace{.2in}  ii)  D III	 \hspace{.1in} $\fU= \fs \fo(2n, \br)$, where $n = 2^{4k+2}$.

\noindent  Here $\fK_{r,s} \approx Fix(\tau) \approx \fu(2^{4k+2})$.  By inspection $dim~\fZ(\fK_{r,s}) = 1$ and  dim $\fK_{r,s} = 2^{8k+4}$.
\newline

\noindent First we consider the case that $s \equiv 0~(mod~4)$.  We recall from Propositions 7.2 and 8.3 that $dim~\fZ(\fK_{r,s}) = dim~\fZ(Fix(\tau_{1})) + dim~\fZ(Fix(\tau_{2}))$.  It follows from bi) that $dim~\fZ(\fK_{r,s}) = 0$ and we conclude that $dim~\fZ(Fix(\tau_{1})) = dim~\fZ(Fix(\tau_{2})) = 0$.  Hence both $\tau_{1}$ and $\tau_{2}$ are of type BD I by the discussion in c).  This shows that  $2^{8k+5} - 2^{4k+3} = dim~\fK_{r,s} = dim~Fix(\tau_{1})  + dim~Fix(\tau_{2})  = 2^{8k+5} - 2^{4k+3} + (p_{1} - 2^{4k+2})^{2} + (p_{2} - 2^{4k+2})^{2} $, where $p_{1} + q_{1} = p_{2} + q_{2} = 2^{4k+3}$.  We conclude that $p_{1} = q_{1} = p_{2} = q_{2} = 2^{4k+2}$.  Hence $\fH_{1} = \fH_{2} = \fs \fo(2^{4k+2}, 2^{4k+2})$ and $\fH_{r,s} = \fs \fo(2^{4k+2}, 2^{4k+2}) \oplus \fs \fo(2^{4k+2}, 2^{4k+2})$.
\newline

\noindent  Next  we consider the case that $s \equiv 2~(mod~4)$.  By bii), Proposition 7.2 and Proposition 8.3 we have $2 = dim~ \fZ(\fK^{\prime}_{r,s}) = dim~\fZ(Fix(\tau_{1})) + dim~\fZ(Fix(\tau_{2}))$.  By c) we conclude that both $\tau_{1}$ and $\tau_{2}$ are involutions of type D III.  From the description of type D III in [He] it follows that $\fH_{1} = \fH_{2} = \fs \fo^{*}(2^{4k+3})$.  Hence $\fH_{r,s} = \fH_{1} \oplus \fH_{2} =  \fs \fo^{*}(2^{4k+3}) \oplus  \fs \fo^{*}(2^{4k+3})$.
\newline

\noindent $\mathbf{Conclusion}$

\hspace{.2in} i) If s is odd, then $\fH_{r,s} \approx \fs \fo(2^{4k+3}, \bc)_{\br}$. 

\hspace{.2in} ii)  If $s \equiv 0~(mod~4)$, then $\fH_{r,s} \approx \fs \fo(2^{4k+2}, 2^{4k+2}) \oplus \fs \fo(2^{4k+2}, 2^{4k+2})$.

\hspace{.2in} iii)  If $s \equiv 2~(mod~4)$, then $\fH_{r,s} \approx \fs \fo^{*}(2^{4k+3}) \oplus  \fs \fo^{*}(2^{4k+3})$.
\newline

\noindent $\mathbf{References}$
\newline

\noindent [BCK], A. Benjamin, B. Chen and K. Kindred, ÒSums of evenly spaced binomial coefficientsÓ, Mathematics Magazine, vol. 83(5), December 2010, 370-373.
\newline

\noindent [E]  P. Eberlein, Ò Isometries of Clifford algebras, I Ó, arXiv : 1701.07421
\newline

\noindent [FH]  W. Fulton and J. Harris, ÒRepresentation Theory, A First CourseÓ, Springer, New York, 1991.
\newline

\noindent [Ha] F. R. Harvey, "Spinors and Calibrations", Perspectives in Mathematics vol.9, Academic Press, New York, 1990.
\newline

\noindent [He] S. Helgason, ÒDifferential Geometry, Lie groups and Symmetric SpacesÓ, Academic Press, New York, 1978.
\newline

\noindent [LM]  H.B. Lawson and M.-L.  Michelsohn, ÒSpin GeometryÓ, Princeton University Press, Princeton, 1989.
\newline

\noindent [W] F. Wisser, Ò Classification of complex and real semi simple Lie algebrasÓ, Diplomarbeit, Universitat Wien, June 2001, internet PDF.

\begin{tikzpicture}
\draw[xstep = 4cm, ystep = 2cm, color=blue] (-10,-10) grid (10,8) ;
\node at (-6.3,-8.4) {$\fs \fo(2^{4k+2},2^{4k+2})$} ;
\node at (-6.3,-8.9) {$\oplus$} ;
\node at (-6.3,-9.5) {$\fs \fo(2^{4k+2},2^{4k+2})$} ;
\node at (-2.3,-8.9){$\fs \fo(2^{4k+3},\bc)_{\br}$} ;
\node at (1.7, -8.4) {$\fs \fo^{*}(2^{4k+3})$} ;
\node at (1.7,-8.9){$\oplus$} ;
\node at (1.7, -9.5) {$\fs \fo^{*}(2^{4k+3})$} ;
\node at (5.7,-8.9){$\fs \fo(2^{4k+3},\bc)_{\br}$} ;
\node at (-9.4,-8.9){$r+s = 8k+7$} ;
\node at (-9.4,-6.9){$r+s = 8k+6$} ;
\node at (-6.2,-6.9) {$\fs \fo(2^{4k+2}, 2^{4k+2})$} ;
\node at (-2.2,-6.9) {$\fs \fo^{*}(2^{4k+3})$} ;
\node at (1.8,-6.9) {$\fs \fo^{*}(2^{4k+3})$} ;
\node at (5.8,-6.9) {$\fs \fo(2^{4k+2}, 2^{4k+2})$} ;
\node at (-9.4,-4.9){$r+s = 8k+5$} ;
\node at (-6.2,-4.9) {$\fs \fu(2^{4k+1}, 2^{4k+1})$} ;
\node at (-2.2,-4.9) {$\fs \fu^{*}(2^{4k+2})$} ;
\node at (1.8,-4.9) {$\fs \fu(2^{4k+1}, 2^{4k+1})$} ;
\node at (5.8,-4.9) {$\fs \ell(2^{4k+2}, \br)$} ;
\node at (-9.4,-2.9){$r+s = 8k+4$} ;
\node at (-6.2,-2.9) {$\fs \fp(2^{4k}, 2^{4k})$} ;
\node at (-2.2,-2.9) {$\fs \fp(2^{4k}, 2^{4k})$} ;
\node at (1.8,-2.9) {$\fs \fp(2^{4k+1}, \br)$} ;
\node at (5.8,-2.9) {$\fs \fp(2^{4k+1}, \br)$} ;
\node at (-9.4,-0.9){$r+s = 8k+3$} ;
\node at (-6, -0.5) {$\fs \fp (2^{4k-1},2^{4k-1})$} ;
\node at (-6,-1) {$\oplus$} ;
\node at (-6, -1.5) {$\fs \fp (2^{4k-1},2^{4k-1})$} ;
\node at (-2,-1) {$\fs \fp(2^{4k}, \bc)_{\br}$} ;
\node at (2, -0.5) {$\fs \fp (2^{4k},\br)$} ;
\node at (2,-1) {$\oplus$} ;
\node at (2, -1.5) {$\fs \fp (2^{4k},\br)$} ;
\node at (6,-1) {$\fs \fp(2^{4k}, \bc)_{\br}$} ;
\node at (-9.4,1.1){$r+s = 8k+2$} ;
\node at (-6,1.1){$\fs \fp(2^{4k-1},2^{4k-1})$} ;
\node at (-2,1.1){$\fs \fp(2^{4k},\br)$} ;
\node at (2,1.1){$\fs \fp(2^{4k},\br)$} ;
\node at (6,1.1){$\fs \fp(2^{4k-1},2^{4k-1})$} ;
\node at (-9.4,3.1){$r+s = 8k+1$} ;
\node at (-6,3.1){$\fs \fu(2^{4k-1}, 2^{4k-1})$} ;
\node at (-2,3.1) {$\fs \ell(2^{4k},\br)$} ;
\node at (2,3.1) {$\fs \fu(2^{4k-1},2^{4k-1})$} ;
\node at (6,3.1) {$\fs \fu^{*}(2^{4k})$} ;
\node at (-9.4,5.1){$r+s = 8k$} ;
\node at (-6,5.1){$\fs \fo(2^{4k-1}, 2^{4k-1})$} ;
\node at (-2,5.1){$\fs \fo(2^{4k-1}, 2^{4k-1})$} ;
\node at (2,5.1){$\fs \fo^{*}(2^{4k})$} ;
\node at (6,5.1){$\fs \fo^{*}(2^{4k})$} ;
\node at (-6,7.1){$s \equiv 0~(mod~4)$} ;
\node at (-2,7.1){$s \equiv 1~(mod~4)$} ;
\node at (2,7.1){$s \equiv 2~(mod~4)$} ;
\node at (6,7.1){$s \equiv 3~(mod~4)$} ;
\end{tikzpicture}
\newpage

\end {document}